\pgfplotsset{compat=1.15}
\numberwithin{equation}{section}
\theoremstyle{plain}
   \newtheorem{theorem}{Theorem}[section]
   \newtheorem{proposition}[theorem]{Proposition}
   \newtheorem{lemma}[theorem]{Lemma}
   \newtheorem{corollary}[theorem]{Corollary}
   \newtheorem{conjecture}[theorem]{Conjecture}
\theoremstyle{definition}
   \newtheorem{definition}[theorem]{Definition}
   \newtheorem{example}[theorem]{Example}
\theoremstyle{remark}
   \newtheorem{remark}[theorem]{Remark}
\newcommand{\NN}{\mathbb{N}}
\newcommand{\one}{\hat{1}}
\newcommand{\zero}{\hat{0}}
\newcommand{\EE}{\mathcal{E}}
\newcommand{\QQ}{\mathbb{Q}}
\newcommand{\RR}{\mathbb{R}}
\newcommand{\sym}{\mathfrak{S}}
\def\newop#1{\expandafter\def\csname #1\endcsname{\mathop{\rm
#1}\nolimits}}
\title[]{On  chain polynomials of geometric lattices}
\author[P.~Br\"and\'en]{Petter Br\"and\'en}
\address{Department of Mathematics, KTH Royal Institute of Technology, SE-100 44 Stockholm,
Sweden}
\author[L.~Saud]{Leonardo Saud Maia Leite}
\keywords{Totally nonnegative matrix, geometric lattice, chain polynomial, real-rooted polynomial, paving matroid, perfect matroid design, single-element extension}
\thanks{PB is a Wallenberg Academy Scholar
  supported by the Knut and Alice Wallenberg
  Foundation, and the G\"oran Gustafsson foundation.}
\begin{document}
\definecolor{ududff}{rgb}{0.30196078431372547,0.30196078431372547,1.}
\definecolor{xdxdff}{rgb}{0.49019607843137253,0.49019607843137253,1.}
\definecolor{red}{rgb}{1,0,0}
\definecolor{ffqqff}{rgb}{1,0,1}
\definecolor{zzttqq}{rgb}{0.6,0.2,0}
\definecolor{ududff}{rgb}{0.30196078431372547,0.30196078431372547,1}
\definecolor{cqcqcq}{rgb}{0.7529411764705882,0.7529411764705882,0.7529411764705882}

\begin{abstract}
Athanasiadis and Kalampogia-Evangelinou recently conjectured that the chain polynomial of any geometric lattice has only real zeros. We verify this conjecture for families of geometric lattices including perfect matroid designs, Dowling lattices, and for a class of geometric lattices that contains  all lattices of flats of paving matroids. 

We also investigate how the conjecture behaves with respect to certain operations such as direct products, ordinal sums and single-element extensions.
\end{abstract}

\maketitle
\thispagestyle{empty}
\setcounter{tocdepth}{2}
\tableofcontents
\newpage

\section{Introduction}
Chain enumeration in partially ordered sets (posets) is a central topic in enumerative and algebraic combinatorics \cite[Chapter 3]{stanley2011enumerative}. Polynomials arising from chain enumeration have been extensively studied. 
The \emph{chain polynomial} of a finite poset $P$ is defined as  
\begin{equation}\label{chainpol}
    c_P \coloneqq \sum_{k \geq 0} c_k (P) t^k,
\end{equation}
where $c_k(P)$ is the number of $k$-element chains (totally ordered subposets) of $P$. Equivalently, $c_P$ is the $f$-polynomial of the order complex of $P$.

A key question in this area is whether the chain polynomials of various classes of posets have only real zeros -- that is, whether they are real-rooted. For finite distributive lattices, this question is equivalent to the Neggers-Stanley conjecture for natural labelings, originally proposed by Neggers \cite{neggers1978representations} in the 1970s and later disproved by Stembridge \cite{stembridge2007counterexamples}. 
Nevertheless, significant progress has been made in establishing real-rootedness for specific families of posets.
The search for classes of posets with real-rooted chain polynomials remains an active area of research.  For instance, Brenti and Welker \cite{brenti2008f} proved that the chain polynomials of the face lattices of simplicial polytopes are real-rooted, and they conjectured that the same holds for all polytopes. This conjecture was later confirmed for cubical polytopes by Athanasiadis \cite{athanasiadis2021face}. More recently, the authors have shown that the chain polynomials of shellable subspace lattices, partition lattices,  and $r$-cubical lattices are real-rooted \cite{branden2024totally}.

Athanasiadis and Kalampogia-Evangelinou \cite{athanasiadis2022chain} proposed the following conjecture:
\begin{conjecture}\cite[Conjecture 1.2]{athanasiadis2022chain}
\label{conj1}
    The chain polynomial $c_L$ is real-rooted for every geometric lattice $L$.
\end{conjecture}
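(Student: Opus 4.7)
The plan is to attack Conjecture \ref{conj1} in stages, since proving real-rootedness for every geometric lattice in one stroke seems out of reach given how varied the intervals of a general geometric lattice can be. My first step is to set up a matrix-theoretic framework in which $c_L(t)$ can be factored, or at least bounded via interlacing, in terms of rank-stratified matrices whose entries encode chain counts between pairs of rank levels. Following the approach initiated in \cite{branden2024totally}, I would associate to $L$ a sequence of matrices $A_i$ whose $(F, G)$-entry counts chains of a prescribed shape from $F$ to $G$, and try to verify that each $A_i$ is totally nonnegative. Real-rootedness of $c_L(t)$ would then follow from the general principle that products of TN matrices acting on interlacing families of polynomials preserve interlacing.

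Next I would specialize to classes of geometric lattices whose interval structure is uniform enough that these matrices depend only on rank. For \emph{perfect matroid designs}, where every flat of rank $k$ contains the same number of atoms, the matrices collapse to a scalar-weighted form, and $c_L(t)$ should factor through an identity involving the Whitney numbers of the second kind; total nonnegativity could then be checked by direct combinatorial identities and known log-concavity results. For \emph{Dowling lattices}, I would exploit the recursive structure (upper intervals are again Dowling lattices) together with the $q$-analogue nature of their Whitney numbers, verifying TN-ness via classical $q$-log-concavity. For \emph{paving matroids}, I would use that intervals below the coatoms are Boolean, so almost all the chain-enumerative data is concentrated near the top of the lattice; the deviation from the Boolean case could plausibly be handled by a rank-one perturbation argument.

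In parallel, I would investigate how real-rootedness of $c_L(t)$ behaves under natural operations on geometric lattices. Direct products should reduce, via the $f$-polynomial of a join of order complexes, to a binomial-type identity that preserves real-rootedness through standard multiplier-sequence theorems. Ordinal sums should admit an interlacing factorization coming from the splitting of chains at the common element. For single-element extensions, I would track how $c_L(t)$ changes when one atom is added to the matroid, aiming for an explicit interlacing relation between $c_L(t)$ and the chain polynomial of its extension; this is the operation most likely to yield inductive traction on the full conjecture.

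The main obstacle is the general case. Without uniformity of intervals, the rank-stratified matrices can fail to be totally nonnegative, and the chain numbers can fluctuate in ways that defeat any direct interlacing argument. The hardest step will therefore be isolating a combinatorial condition, strictly weaker than being a perfect matroid design but strong enough to cover (at least) lattices of flats of paving matroids, that still forces the relevant matrices to be TN. Proving Conjecture \ref{conj1} in full generality would very likely require a genuinely new idea beyond the TN-matrix toolkit, so my realistic target is to establish it for the structured families above and for closure under the operations just described.
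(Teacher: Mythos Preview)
The statement is a conjecture, and neither you nor the paper proves it in full; both aim to verify it for structured families and to study its behaviour under operations. Your broad outline---use the $\mathrm{TN}$-poset framework of \cite{branden2024totally}, specialize to families with uniform interval structure, then study closure under products, ordinal sums, and extensions---matches the paper's program exactly. That said, several of your proposed mechanisms differ from what the paper actually does, and in each case the paper's route is sharper.

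For perfect matroid designs you plan to invoke log-concavity of Whitney numbers; the paper instead introduces an incidence-algebra element $R_L=f_L\mu$, proves an explicit recursion for $R_L(x,y)$ coming from semimodularity, and uses this to show the rank-generating sequence is \emph{resolvable}, hence $\mathrm{TN}$. No log-concavity input is needed. For Dowling lattices you propose to use the recursive upper-interval structure and $q$-log-concavity; the paper instead passes to the \emph{dual} lattice $Q_n(G)'$, which is rank uniform, and observes that Dowling's recursion for the Whitney numbers gives $R_n^{G'}(t)=(t+\alpha)^n\,1$ for a diagonal operator $\alpha$, from which resolvability is immediate. The dualization is the key move you are missing. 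For paving lattices your ``rank-one perturbation'' idea is not what the paper does: the paper defines a general construction $P(\mathcal{H},y,d)$ and proves, via an interlacing argument using the resolving polynomials $R_{n,k}(t)$, that $c_{\langle h\rangle}(t)\preceq c_{\tau(Q)}(t)$ for every coatom $h$; real-rootedness then follows from the chain decomposition $c_Q(t)=c_{\tau(Q)}(t)+t\sum_h c_{\langle h\rangle}(t)$.

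For direct products the paper uses Wagner's diamond product, which is precisely the multiplier-sequence mechanism you allude to. For ordinal sums the paper does not use a chain-splitting interlacing argument but rather proves directly that resolvability is preserved. For single-element extensions the paper does not obtain a general interlacing relation between $c_L(t)$ and $c_{L+_\mathcal{M}e}(t)$; it handles only principal extensions of $\mathbb{B}(E)$ and $\tau(\mathbb{B}(E))$, exploiting the product decomposition $\mathbb{B}(E)+_X e\cong\tau(\mathbb{B}(X\cup e))\times\mathbb{B}(E\setminus X)$ together with the diamond product.
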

In support of this conjecture, Athanasiadis, Kalampogia-Evangelinou, and Douvropoulos \cite{athanasiadis2023two,athanasiadis2022chain} verified it for several cases, including subspace lattices, partition lattices of types $A$ and $B$, lattices of flats of near-pencils, and Boolean lattices. The main goal of this paper is to extend their results by proving the conjecture for additional classes of geometric lattices.

\begin{itemize}[leftmargin=4mm]
\item Section \ref{triang} shows that triangular semimodular lattices, which include the well-studied class of perfect matroid designs \cite{Deza}, have real-rooted chain polynomials.
\item Section \ref{dowling} establishes the real-rootedness of chain polynomials for Dowling lattices, which generalize the partition lattices of types $A$ and $B$. This result extends earlier work by Athanasiadis and Kalampogia-Evangelinou \cite{athanasiadis2022chain}.
\item Section \ref{gen-TN-posets} introduces a new construction on posets inspired by the structure of paving lattices (the lattices of flats of paving matroids). We prove that posets constructed via this method using $\mathrm{TN}$-posets, introduced by the authors in \cite{branden2024totally}, have real-rooted chain polynomials. Notably, this includes paving lattices themselves. 
\item Section \ref{SEE} explores single-element extensions, demonstrating that geometric lattices arising from principal single-element extensions of Boolean lattices, as well as from single truncations of Boolean algebras, have real-rooted chain polynomials. These extensions are important in Matroid Theory, since any lattice of flats of a matroid can be constructed via a finite sequence of single-element extensions of a Boolean algebra.
\item Section \ref{sec-counterexample} shows that for each $n \geq 3$, there  is a geometric lattice of rank $n$ whose $h$-polynomial is not interlaced by the Eulerian polynomial $A_n$. This disproves \cite[Conjecture 5.1]{athanasiadis2022chain}.
\end{itemize}

\section{Preliminaries}
\subsection{Interlacing sequences of polynomials}
A polynomial $f \in \mathbb{R}[t]$ is called \emph{real-rooted} if all of its zeros are real. Suppose $f$ and $g$ are real-rooted polynomials with positive leading coefficients, and that the zeros of $f$ and $g$ are
\[
    \cdots \leq \alpha_3 \leq \alpha_2 \leq \alpha_1  \ \  \mbox{ and } \ \ \cdots \leq \beta_3 \leq \beta_2 \leq \beta_1,
\]
respectively. We say that the zeros of $g$ \emph{interlace} those of $f$ if 
\[
 \cdots \leq \beta_3 \leq \alpha_3 \leq \beta_2 \leq \alpha_2 \leq \beta_1 \leq \alpha_1, 
\]
and write $g \preceq f$. In particular the degrees of $f$ and $g$ differ by at most one. By convention we also write $0 \preceq f$ and $f \preceq 0$ for any real-rooted polynomial $f$. A sequence $\{f_i\}_{i=0}^n$ of real-rooted polynomials with nonnegative coefficients is said to be \emph{interlacing} if $f_i \preceq f_j$ for all $i<j$, and we let $\mathcal{P}_n$ be the set of all interlacing sequences $\{f_i\}_{i=0}^n$ of polynomials. 

Recall that a matrix with real nonnegative entries is $\mathrm{TP}_2$ if all of its $2\times 2$-minors are nonnegative. If $A=(a_{ij}(t))$ is an $(m+1) \times (n+1)$ matrix with entries in $\RR[t]$, then it defines a map on polynomial sequences as usual
$$
\{f_j\}_{i=0}^n \longmapsto \{g_i\}_{i=0}^m, \quad \mbox{ where } \quad g_i= \sum_{i=0}^n a_{ij} f_j. 
$$ 

\begin{lemma}
\label{basint}
\hfill
    \begin{enumerate}[label=(\roman*)]
        \item\label{basint:1}\hypertarget{basint:1}{Let} $A = (A_{ij})$ be an $(m+1) \times (n+1)$ matrix with entries in $\RR$. Then $A \colon \mathcal{P}_n \longrightarrow \mathcal{P}_m$ 
        if and only if $A$ is $\mathrm{TP}_2$.
        \item\label{basint:2}\hypertarget{basint:2}{If}  $h \preceq f$ and $h \preceq g$, then $h \preceq f + g$.
        \item\label{basint:4}\hypertarget{basint:2}{If}  $f \preceq h$ and $g \preceq h$, then $f + g \preceq h$.
        \item\label{basint:3}\hypertarget{basint:2}{Suppose} $f, g \in \mathbb{R}_{\geq 0}[t]$, $\deg (f) = \deg (g) + 1$, and $g \preceq f$. If $\lambda \geq 0$ is such that the leading coefficient of $f - \lambda t  g$ is positive, then $g \preceq f - \lambda t g \preceq f$.
     \end{enumerate}
\end{lemma}
\begin{proof}
    The first result is \cite[Corollary 7.8.6]{branden2015unimodality}. The second and third results are well known, and can be found in e.g. \cite[Lemma 2.6]{borcea2010multivariate}. 
    
 To prove $(iv)$ we may assume that $f$ and $g$ have no zeros in common, since we may factor out any common linear factor. We prove $g \preceq f - \lambda t g$, the proof of $f - \lambda t g \preceq f$ follows similarly.  Notice that all the zeros of $g$ are simple, since otherwise $f$ and $g$ would have a common zero by interlacing. Let $\alpha_n < \cdots < \alpha_2< \alpha_1$ be the zeros of $g$, and let $F_\lambda= f - \lambda t g$. Then $(-1)^iF_\lambda(\alpha_i) = (-1)^if(\alpha_i) >0$ for each $1 \leq i \leq n$, by interlacing. By the intermediate value theorem, $F_\lambda$ has a zero in each interval $(\alpha_{i+1}, \alpha_i)$, $1 \leq i <n$. Since $F_\lambda(\alpha_1)<0$ and the leading coefficient of $F_\lambda$ is positive, there must be a zero of $F_\lambda$ in the interval $(\alpha_1, \infty)$. By the same reasoning $F_\lambda$ must have a zero in the interval $(-\infty, \alpha_n)$. Hence $g \preceq f - \lambda t g$.
\end{proof}

\subsection{\texorpdfstring{$\mathrm{TN}$}--posets}
For undefined poset terminology we refer to \cite{stanley2011enumerative}. The notion of quasi-rank uniform posets was introduced in \cite{branden2024totally}. Suppose $P$ is a locally finite poset with a least element $\zero$. Define a \emph{quasi-rank function} $\rho : P \to \NN$ by 
$$
    \rho(x) = \max\{ k : \zero =x_0 <x_1 < \cdots <x_k=x\}. 
$$
The \emph{quasi-rank} of $P$ is $\rho(P)= \sup\{ \rho(x) : x \in P\} \in \NN\cup\{\infty\}$ and the \emph{quasi-rank generating polynomial} of a finite poset $P$ is defined as
\[
    f_P = \sum_{x \in P} t^{\rho(x)}.
\]
For $x \leq y$ in $P$, let $[x,y] = \{ z \in P \colon x \leq z \leq y \}$. We say that $P$ is \emph{quasi-rank uniform} if for any $x, y\in P$ with $\rho(x)=\rho(y)$ and each $0 \leq k \leq \rho(x)$,
$$
    |\{ z \in [\zero, x] : \rho(z) = k \}| = |\{ z \in [\zero, y] : \rho(z) = k \}|.
$$
If $P$ is a graded poset, then we omit the ``quasi'' in all definitions above.

\begin{figure}[H]
\centering
\begin{tikzpicture}[line cap=round,line join=round,>=triangle 45,x=1cm,y=1cm,scale=0.9]
    \clip(-5.988811187214801,-1.9199521854769663) rectangle (6.195417344425535,4);
    \draw [line width=1pt] (-1,0)-- (0,-1);
    \draw [line width=1pt] (0,-1)-- (1,0);
    \draw [line width=1pt] (3,0)-- (0,-1);
    \draw [line width=1pt] (5.5,0)-- (0,-1);
    \draw [line width=1pt] (-3,0)-- (0,-1);
    \draw [line width=1pt] (-2,1.5)-- (-3,0);
    \draw [line width=1pt] (-1,1.5)-- (-1,0);
    \draw [line width=1pt] (-1,3)-- (-1,1.5);
    \draw [line width=1pt] (-1,3)-- (-2,1.5);
    \draw [line width=1pt] (-5.5,0)-- (0,-1);
    \draw [line width=1pt] (-5.5,0)-- (-1,3);
    \draw [line width=1pt] (1,0)-- (1,1.5);
    \draw [line width=1pt] (1,1.5)-- (1,3);
    \draw [line width=1pt] (1,3)-- (2,1.5);
    \draw [line width=1pt] (2,1.5)-- (3,0);
    \draw [line width=1pt] (5.5,0)-- (1,3);
    \draw (-0.020566157853083845,-1.081019629462914) node[anchor=north west] {0};
    \draw (-6.1,0.5) node[anchor=north west] {1};
    \draw (-3.6,0.5) node[anchor=north west] {1};
    \draw (-1.6,0.5) node[anchor=north west] {1};
    \draw (1.1280394893070957,0.5) node[anchor=north west] {1};
    \draw (3.1887731503885943,0.5) node[anchor=north west] {1};
    \draw (5.660527459718589,0.5) node[anchor=north west] {1};
    \draw (-2.7794718789731228,1.5709081147157344) node[anchor=north west] {2};
    \draw (-0.6849556988575014,1.554016855198673) node[anchor=north west] {2};
    \draw (0.30,1.554016855198673) node[anchor=north west] {2};
    \draw (2.2,1.565277694876714) node[anchor=north west] {2};
    \draw (-0.9946287900036282,3.357965935030875) node[anchor=north west] {3};
    \draw (0.970387733814522,3.3635963548698955) node[anchor=north west] {3};
    \begin{scriptsize}
        \draw [fill=black] (-1,0) circle (2pt);
        \draw [fill=black] (-3,0) circle (2pt);
        \draw [fill=black] (0,-1) circle (2pt);
        \draw [fill=black] (1,0) circle (2pt);
        \draw [fill=black] (3,0) circle (2pt);
        \draw [fill=black] (5.5,0) circle (2pt);
        \draw [fill=black] (-2,1.5) circle (2pt);
        \draw [fill=black] (-1,1.5) circle (2pt);
        \draw [fill=black] (-1,3) circle (2pt);
        \draw [fill=black] (1,1.5) circle (2pt);
        \draw [fill=black] (1,3) circle (2pt);
        \draw [fill=black] (2,1.5) circle (2pt);
        \draw [fill=black] (-5.5,0) circle (2pt);
    \end{scriptsize}
\end{tikzpicture}
\caption{A quasi-rank uniform poset $P$, and the corresponding quasi-rank function. Its quasi-rank generating polynomial is $f_P = 1 + 6t + 4t^2 + 2t^3$.}
\label{fig:quasi-rank-uniform}
\end{figure}
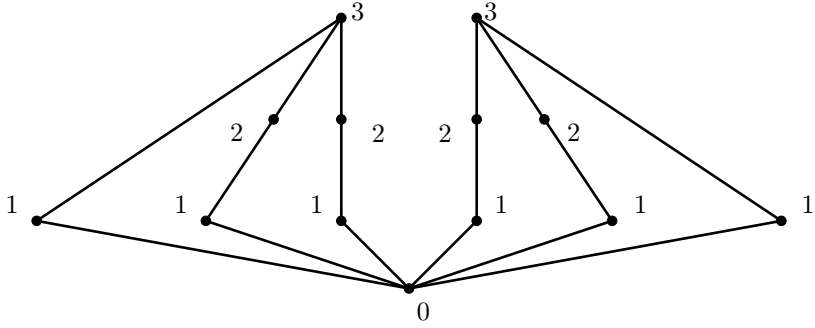

We associate a matrix $R(P)$ to any quasi-rank uniform poset $P$ as follows. If $x \in P$, $\rho(x)=n$, and $0 \leq k \leq n$, let 
$$
    r_{n,k}= r_{n,k}(P)=|\{ z \in [\zero, x] : \rho(z) = k \}|, 
$$
and let $R(P)= (r_{n,k})_{n,k=0}^N$, where $N = \rho(P)$. We say that $P$ is a \emph{$\mathrm{TN}$-poset} if the matrix $R(P)$ is totally nonnegative, that is, if all the minors of $R(P)$ are nonnegative.

Let $N \in \mathbb{N} \cup \{ \infty \}$, and let $\{ R_n\}^N_{n=0}$ be a sequence of monic polynomials such that $R_n$ has degree $n$ for each $n$. Write $R_n = \sum_{k=0}^n r_{n,k} t^k$.  The sequence $\{ R_n\}^N_{n=0}$ (and the matrix $R = (r_{n,k})_{n,k=0}^N$)  is called \emph{resolvable} if there is a matrix $(\lambda_{n,k})_{0\leq k\leq n <N}$ of nonnegative numbers, and an array $(R_{n,k})_{0\leq k \leq n \leq N}$ of monic polynomials such that 
\begin{itemize}
    \item $R_{n,0}=R_n$ and $R_{n,n} =t^n$ for all $0\leq n \leq N$,
    \item $t^k$ divides  $R_{n,k}$ for all $0 \leq k \leq n \leq N$, and 
    \item if $0\leq k \leq n <N$, then 
    \begin{equation}\label{r-pasc}
        R_{n+1,k}=R_{n+1,k+1}+ \lambda_{n,k} R_{n,k}. 
    \end{equation}
\end{itemize}
In particular, \eqref{r-pasc} implies
\begin{equation}
\label{r-pasc2}
    R_{n+1,k} = t^{n+1} + \sum_{j=k}^n \lambda_{n,j} R_{n,j}.
\end{equation}

In \cite[Theorem 2.6]{branden2024totally} it is shown that a sequence of monic polynomials is resolvable if and only if its corresponding matrix $R$ is $\mathrm{TN}$. Hence, if $R_n$ is the quasi-rank generating polynomial of an interval $[\hat{0}, x]$ of a quasi-rank uniform poset $P$, with $\rho(x) = n$, then $P$ is a $\mathrm{TN}$-poset if and only if the sequence $\{ R_n \}^N_{n=0}$ is resolvable.

The \emph{chain polynomials} associated to $R$ are the polynomials $\{p_n\}_{n=0}^N$ defined by $p_0=1$, and 
\begin{equation}
\label{polreceq}
    p_n= t \sum_{k=0}^{n-1}r_{n,k}\cdot p_k, \ \ \ 0<n \leq N. 
\end{equation}  

Let $\RR_N[t]$ be the linear space of all polynomials in $\RR[t]$ of degree at most $N$. The \emph{subdivision operator} associated to $R$ is the linear map $\EE : \RR_N[t] \to \RR_N[t]$ defined by $\EE(t^n)= p_n$ for each $0 \leq n \leq N$. By \cite[Proposition 5.2]{branden2024totally}, if $R=R(P)$ for a quasi-rank uniform poset $P$ with a least element $\zero$, then the polynomials $p_n$ may be expressed as
\begin{equation}
\label{pnchain}
    p_n = \sum_{j \geq 1} |\{\zero =x_0 < x_1<\cdots < x_j=x\}|t^j,
\end{equation}
where $x$ is any element in $P$ for which $\rho(x)=n$.

\section{Triangular semimodular lattices and perfect matroid designs}\label{triang} 
In this section we will prove that triangular semimodular lattices, and thus perfect matroid designs, are $\mathrm{TN}$-posets. A locally finite lattice $L$ is \emph{semimodular} if it is graded, and its rank function $\rho$ satisfies
\[
    \rho(x) + \rho(y) \geq \rho(x \wedge y) + \rho(x \vee y)
\]
for every $x, y \in L$. Moreover, if
\[
    \rho(x) + \rho(y) = \rho(x \wedge y) + \rho(x \vee y)
\]
for every $x, y \in L$, then $L$ is \emph{modular}.

If $L$ is a semimodular lattice that is also atomistic, i.e., each element except $\hat{0}$ is the join of a set of atoms, then $L$ is called a \emph{geometric lattice}.

\begin{definition}
    Let $P$ be a locally finite poset with a least element $\hat{0}$ such that each finite interval in $P$ is graded.  Then $P$ is  \emph{triangular} if there is a function $M : \{(i,j) \in \NN \times \NN : i < j \} \to \NN$ such that any interval $[x,y]$ of $P$ with $\rho(x)=i$ and $\rho(y)=j$ has exactly $M(i,j)$  maximal chains. 
\end{definition}
    
 Triangular posets were studied in \cite[Section 9]{triangular-posets} by Doubilet, Rota and Stanley. In particular, modular and semimodular triangular lattices   were classified in \cite[Theorem 9.1]{triangular-posets} and \cite[Theorem 9.2]{triangular-posets}, respectively.   
 
 Chains and Boolean algebras are clearly triangular.   Let $V_n(q)$ be an $n$-dimensional vector space over a finite field with $q$ elements. The \emph{projective geometry} $\mathbb{B}_n(q)$ is the  lattice of all linear subspaces of $V_n(q)$ partially ordered by inclusion \cite[Section 6.1]{oxley2006matroid}.
     By e.g. \cite[Proposition 6.1.8]{oxley2006matroid},
     $\mathbb{B}_n(q)$ is triangular.

    The \emph{affine geometry} $\mathbb{A}_n(q)$ is the lattice of all affine subspaces of $V_n(q)$ partially ordered by inclusion \cite[Section 6.2]{oxley2006matroid}.
By e.g. \cite[Proposition 6.2.5]{oxley2006matroid}  $\mathbb{A}_n(q)$ is triangular. 
    
 Chains, Boolean algebras, and projective geometries are examples of modular triangular lattices, while affine  geometries are examples of geometric (but not necessarily modular) triangular lattices. 
 
 The \emph{truncation} of a finite poset $P$ of quasi rank $r>0$ is the poset $\tau(P)$ obtained from $P$ by removing all elements of quasi rank $r-1$. Iterated truncations of (affine) projective geometries are examples of triangular geometric lattices, because truncations preserve the properties of being a geometric lattice and being triangular.     
    
By Proposition \ref{prop-triangular} below it follows that  any triangular poset is rank uniform.  
For our purposes it is easier to work with the following alternative definition of triangular posets. 
\begin{proposition}
\label{prop-triangular}
Let $P$ be a locally finite poset with a least element $\hat{0}$ such that each finite interval in $P$ is graded. Then $P$ is  triangular if and only if there exists a function $N : \{ (i,j,k)\in \NN^3 : i <j<k \} \to \NN$ such that any interval $[x,y]$ of $P$ with $\rho(x)=i$ and $\rho(y)=k$ has exactly $N(i,j,k)$  elements of rank $j$.
\end{proposition}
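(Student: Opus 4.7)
The plan is to exploit the fundamental observation that every maximal chain in a graded interval $[x,y]$ visits exactly one element of each intermediate rank. For any triple $i<j<k$ and any interval $[x,y]$ in $P$ with $\rho(x)=i$, $\rho(y)=k$, partitioning the maximal chains of $[x,y]$ according to their unique rank-$j$ element gives the identity
\[
    \#\{\text{max.\ chains in } [x,y]\} \;=\; \sum_{\substack{z \in [x,y] \\ \rho(z)=j}} \#\{\text{max.\ chains in } [x,z]\} \cdot \#\{\text{max.\ chains in } [z,y]\}.
\]
Both implications will follow from manipulating this identity.

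For the forward direction, I would assume $P$ is triangular and fix such an interval $[x,y]$. By triangularity each summand on the right equals $M(i,j)\,M(j,k)$, and the left side equals $M(i,k)$, so the identity reduces to $M(i,k) = n_j(x,y)\cdot M(i,j)\,M(j,k)$, where $n_j(x,y)$ denotes the number of rank-$j$ elements of $[x,y]$. Solving yields $n_j(x,y) = M(i,k)/(M(i,j)\,M(j,k))$, a quantity depending only on $(i,j,k)$. Defining $N(i,j,k)$ to be this ratio on realized rank triples (and arbitrarily otherwise) completes this direction; note that in a realized interval $[x,y]$ with $i<j<k$, the gradedness of $[x,y]$ forces $M(i,j), M(j,k)\geq 1$, so there is no division-by-zero issue.

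For the reverse direction, I would define $M(i,j)$ by induction on $j-i$: set $M(i,i+1)=1$, and for $j-i\geq 2$ pick any intermediate rank $\ell$ and put $M(i,j) := N(i,\ell,j)\cdot M(i,\ell)\cdot M(\ell,j)$. Applied to a concrete realized interval $[x,y]$ with $\rho(x)=i, \rho(y)=j$, the decomposition identity together with the inductive hypothesis shows that its number of maximal chains is exactly $N(i,\ell,j)\cdot M(i,\ell)\cdot M(\ell,j)$. Since the left-hand side does not depend on $\ell$, the definition of $M(i,j)$ is independent of the chosen $\ell$, and every realized interval of the given rank-pair has the stated number of maximal chains, so $P$ is triangular.

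There is no real obstacle here: the only mildly subtle point is verifying independence of the auxiliary rank $\ell$ in the inductive definition of $M(i,j)$, which is handled automatically by the fact that the formula actually counts a concrete quantity in any realized interval.
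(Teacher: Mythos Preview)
Your argument is correct and follows the same approach as the paper: both directions rest on the decomposition identity obtained by partitioning the maximal chains of $[x,y]$ by their unique element of a fixed intermediate rank. Your forward direction is identical to the paper's, yielding $N(i,j,k)=M(i,k)/(M(i,j)M(j,k))$; for the converse the paper simply writes down the closed-form product $M(i,k)=N(i,i+1,k)N(i+1,i+2,k)\cdots N(k-2,k-1,k)$, which is the special case $\ell=i+1$ of your inductive recipe iterated---so your extra step of checking independence of the intermediate rank $\ell$ is unnecessary but harmless.
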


\begin{proof}
Suppose $P$ is triangular, $x<y$,  and $0 \leq  \rho(x)=i < j < k=\rho(y)$. Then  
$$
M(i,k) = \sum_{\substack{x <z < y\\ \rho(z)=j}} M(i,j)M(j,k),
$$
since each maximal chain of $[x,y]$ contains a unique element of rank $j$. Hence 
$$
N(i,j,k):= |\{ z \in [x,y] \colon \rho(z) = j \}| = \frac {M(i,k)} {M(i,j)M(j,k)}
$$
does not depend on the choices of $x$ and $y$. 

Conversely if $N$ exists, then  
$$
M(i,k)= N(i,i+1,k)N(i+1,i+2,k) \cdots N(k-2,k-1,k).
$$
\end{proof}
A geometric lattice $L$ is called a \emph{perfect matroid design} if for all $x, y \in L$, 
$$
\rho(x) = \rho(y) \ \ \mbox{ implies } \ \ |\{ a \leq x : \rho(a)=1\}| = |\{ a \leq y : \rho(a)=1\}|, 
 $$
 see \cite{Deza}. 
\begin{proposition}\label{alleq}
Let $L$ be a geometric lattice. Then the following are equivalent
\begin{itemize}
\item[(i)] $L$ is triangular, 
\item[(ii)] $L$ is rank uniform,
\item[(iii)] $L$ is a perfect matroid design. 
\end{itemize}
\end{proposition}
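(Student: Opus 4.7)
The plan is to prove the cycle $(i)\Rightarrow (ii)\Rightarrow (iii)\Rightarrow (i)$. The first two implications are immediate. For $(i)\Rightarrow (ii)$, \Cref{prop-triangular} furnishes a function $N(i,j,k)$ counting rank-$j$ elements in any interval with rank endpoints $(i,k)$; applied to the lower intervals $[\hat 0,x]$ and $[\hat 0,y]$ with $\rho(x)=\rho(y)=n$, both contain $N(0,k,n)$ rank-$k$ elements, so $L$ is rank uniform. For $(ii)\Rightarrow (iii)$, specialize the rank uniformity condition to $k=1$: since $L$ is atomistic, rank-$1$ elements of $[\hat 0,x]$ are precisely the atoms of $L$ below $x$, which is the defining property of a perfect matroid design.

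The main work is $(iii)\Rightarrow (i)$. Write $\alpha(r)$ for the (well-defined) number of atoms of $L$ below any rank-$r$ element. The key step is to show that for every $x\in L$ of rank $i$, the upper interval $[x,\hat 1]$ is a geometric lattice and itself a PMD, with atom-count function $\alpha_i(r)=(\alpha(i+r)-\alpha(i))/(\alpha(i+1)-\alpha(i))$ depending only on $i$ and $\alpha$. That $[x,\hat 1]$ is geometric is standard: semimodularity is inherited from $L$, and any $y\in[x,\hat 1]$ is the join of the rank-$(i+1)$ flats $x\vee a$ for atoms $a\leq y$ with $a\not\leq x$. For the PMD claim, fix $y\in[x,\hat 1]$ with $\rho_L(y)=k$ and double-count pairs $(a,z)$ where $a$ is an atom of $L$ with $a\leq y$ and $a\not\leq x$, and $z$ is a flat satisfying $x<z\leq y$, $\rho_L(z)=i+1$, and $a\leq z$. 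Counting by $z$: each such $z$ contains exactly $\alpha(i+1)-\alpha(i)$ atoms not below $x$. Counting by $a$: semimodularity forces $z=x\vee a$, so the number of pairs is $\alpha(k)-\alpha(i)$. Equating shows that the number of atoms of $[x,\hat 1]$ below $y$ equals $(\alpha(k)-\alpha(i))/(\alpha(i+1)-\alpha(i))$, depending only on $i$ and $k$.

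Induction on rank then shows that the Whitney numbers $w_k$ of a PMD are determined by its rank and atom-count function: counting atom-flat incidences yields $w_k(L)\cdot \alpha(k)=w_1(L)\cdot w_{k-1}([a,\hat 1])$ for any atom $a$, and the previous paragraph identifies $[a,\hat 1]$ as a PMD of rank $\rho(L)-1$ whose parameters depend only on $\alpha$, so the inductive hypothesis applies. Consequently, for any interval $[x,y]$ with $\rho(x)=i$ and $\rho(y)=k$, the interval $[x,y]$ is a lower interval of the PMD $[x,\hat 1]$ and is itself a PMD of rank $k-i$ with atom-count function determined by $i$, $k$, and $\alpha$; its number of rank-$(j-i)$ elements, which is the number of rank-$j$ elements of $L$ in $[x,y]$, therefore depends only on $(i,j,k)$, establishing triangularity. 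The main obstacle is the upper-interval PMD lemma, since it requires exploiting semimodularity through the identity $\rho(x\vee a)=i+1$ for atoms $a\not\leq x$ and then propagating the resulting uniform parameters through the Whitney-number induction.
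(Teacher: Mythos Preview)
Your proof is correct. The implications $(i)\Rightarrow(ii)$ and $(ii)\Rightarrow(iii)$ match the paper's argument exactly. For $(iii)\Rightarrow(i)$, the paper simply invokes \cite[Proposition~2.2.3]{Deza}, whereas you supply a self-contained proof via the upper-interval PMD lemma and the Whitney-number recursion. Your double-counting argument (pairing atoms $a\leq y$, $a\not\leq x$ with the covers $z=x\vee a$) is the standard one, and is essentially what lies behind the cited result in Deza, so the two approaches differ only in that you have unpacked the reference rather than deferred to it. The benefit of your write-up is that the proposition becomes self-contained; the cost is added length for a fact that is classical in the perfect-matroid-design literature. One minor remark: your argument tacitly assumes $L$ has a top element $\hat 1$ (so that $[x,\hat 1]$ exists). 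This is harmless in the paper's setting of finite geometric lattices, but note that your double-counting step already computes the number of atoms of $[x,y]$ below any $w\in[x,y]$ directly, so the detour through $[x,\hat 1]$ is not actually needed.
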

\begin{proof}
By Proposition \ref{prop-triangular}, $(i)$ implies $(ii)$. Clearly $(ii)$ implies $(iii)$. Also $(iii)$ implies $(i)$ by \cite[Proposition 2.2.3]{Deza}. 
\end{proof}
Hence projective geometries and truncations of Boolean algebras are perfect matroid designs, but there are more \cite{Deza}.

Next we will define an element $R_P$ in the incidence algebra of $P$, that will be used to prove resolvability for triangular semimodular lattices. 

Recall that the \emph{incidence algebra}  of a locally finite poset $P$, over the field $\QQ$, is the set $I(P)$ of all function $f : \mathrm{Int}(P) \to \QQ$, where $\mathrm{Int}(P)$ is the set of all finite and closed intervals of $P$.  The multiplication in $I(P)$ is  defined by 
$$
(fg)(x,y)= \sum_{x \leq z \leq y} f(x,z)g(z,y). 
$$
The element $\zeta \in I(P)$ is the function defined by $\zeta(x,y)=1$ for all $x\leq y$. The \emph{M\"obius function}, $\mu$, is the inverse of $\zeta$ in $I(P)$. 

 Suppose $P$ is a locally finite poset with a least element $\zero$, and let $f_P \in I(P)$ be defined by 
$$
f_P(x,y) = \sum_{z \leq y}t^{\rho(z)},
$$
where $\rho$ is the quasi-rank function of $P$. 
Notice that $f_P(x,y)$ does not depend on $x$. 
Define a function $R_P$ in $I(P)$ by 
$
R_P= f_P \mu. 
$
It follows that $R_P(x,y)$ is a monic polynomial in $t$ of degree $\rho(y)$. 
Also $R_P(y,y)= f_P(y,y)$, and 
\begin{align*}
    R_P(\zero, y) &= \sum_{x \leq y} f_P(\zero,x)\mu(x,y)= \sum_{x \leq y} \left(\sum_{z \leq x}t^{\rho(z)}\right)\mu(x,y)  \\
    &= \sum_{z \leq y}t^{\rho(z)} \sum_{z \leq x \leq y}\mu(x,y)= t^{\rho(y)}.
\end{align*}

If $P$ is triangular, let $T(P)$ be the subset of $I(P)$ given by 
$$
T(P) = \{f \in I(P) : f(x,y)=f(x',y') \mbox{ if } \rho(x)=\rho(x') \mbox{ and } \rho(y)=\rho(y')\}. 
$$

\begin{lemma}\label{triR}
If $P$ is triangular, then $R_P \in T(P)$. 
\end{lemma}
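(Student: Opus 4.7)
The plan is to show that under the triangular hypothesis, both ingredients entering the convolution $R_P = f_P\mu$ depend only on the ranks of their arguments, and then to conclude by grouping the resulting sum by the rank of the middle variable. Concretely, it suffices to prove (a) that $f_P(\zero, z)$ is a polynomial depending only on $\rho(z)$, and (b) that $\mu(z, y)$ depends only on $(\rho(z), \rho(y))$.

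Claim (a) is immediate from Proposition \ref{prop-triangular}: expanding
\[
f_P(\zero, z) \;=\; \sum_{j=0}^{\rho(z)} \bigl|\{w \leq z : \rho(w) = j\}\bigr|\, t^j,
\]
the interior coefficients are $N(0, j, \rho(z))$, while the boundary coefficients at $j = 0$ and $j = \rho(z)$ are both $1$. Hence the whole polynomial is determined by $\rho(z)$; call it $F_n(t)$ when $\rho(z) = n$.

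For claim (b), I would proceed by induction on $k - i$, where $i = \rho(z)$ and $k = \rho(y)$. The base case $i = k$ is trivial since $\mu(z, z) = 1$. For the inductive step, apply the defining recurrence $\mu(z, y) = -\sum_{z \leq w < y} \mu(z, w)$, partition the sum according to $j = \rho(w)$ with $i \leq j < k$, and use Proposition \ref{prop-triangular} to count the $w$ of rank $j$ in $[z, y]$ uniformly as a function of $(i, j, k)$. By induction each $\mu(z, w)$ depends only on $(i, j)$, so the whole sum depends only on $(i, k)$; call its common value $M_{i, k}$. This induction is the main (and essentially only) content of the argument; everything else is bookkeeping.

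With both uniformities in hand, grouping by $j = \rho(z)$ gives
\[
R_P(x, y) \;=\; \sum_{x \leq z \leq y} f_P(\zero, z)\, \mu(z, y) \;=\; \sum_{j=\rho(x)}^{\rho(y)} \bigl|\{z \in [x,y] : \rho(z) = j\}\bigr|\, F_j(t)\, M_{j, \rho(y)},
\]
and invoking Proposition \ref{prop-triangular} one last time to see that the inner count depends only on $(\rho(x), j, \rho(y))$ (with boundary counts $1$ when $j = \rho(x)$ or $j = \rho(y)$) shows that $R_P(x, y)$ is determined by $\rho(x)$ and $\rho(y)$ alone, i.e.\ $R_P \in T(P)$.
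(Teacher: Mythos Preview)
Your proof is correct, but it takes a different route from the paper's. The paper argues by induction directly on $R_P(x,y)$, using the identity $f_P = R_P\zeta$ to write
\[
R_P(x,y) = f_P(x,y) - \sum_{x \le z < y} R_P(x,z),
\]
and then invoking Proposition~\ref{prop-triangular} together with the inductive hypothesis on the terms $R_P(x,z)$ with $\rho(z)-\rho(x) < \rho(y)-\rho(x)$. Your approach instead first establishes the classical fact that $\mu \in T(P)$ for triangular $P$ (your claim~(b)), and then observes that the convolution of two functions in $T(P)$ stays in $T(P)$, applied to $f_P$ and $\mu$. Both arguments hinge on the same induction pattern and the same appeal to Proposition~\ref{prop-triangular}; yours is a bit more modular in that it isolates $\mu \in T(P)$ as a reusable lemma (and implicitly the fact that $T(P)$ is a subalgebra of $I(P)$), while the paper's is slightly more self-contained since it never needs to name $\mu$ explicitly beyond the definition $R_P = f_P\mu$.
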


\begin{proof}
We prove that $R_P(x,y)$ only depends on the ranks of $x$ and $y$ by  induction on $m=\rho(y)-\rho(x)$. The case when $m=0$ follows from the fact that $P$ is rank uniform. 

Suppose true for all integers smaller than  $m$. For $x\leq y$ with $\rho(y)-\rho(x)=m$, 
$$
f_P(x,y)= (R_P\zeta)(x,y)= \sum_{x\leq z\leq y} R_P(x,z)= R_P(x,y) + \sum_{x \leq z<y}R_P(x,z),  
$$
and hence 
$$
R_P(x,y)=  \sum_{z \leq y}t^{\rho(z)}- \sum_{x \leq z<y}R_P(x,z). 
$$
Clearly $\sum_{z \leq y}t^{\rho(z)}$ only depends on $\rho(y)$ by Proposition \ref{prop-triangular}. By induction the terms $R_P(x,z)$ only depends on the ranks of $x$ and $z$, since $\rho(z)-\rho(x)<m$. The lemma now follows by Proposition \ref{prop-triangular} and induction. 
\end{proof}

\begin{proposition}\label{altformR}
Let $L$ be a locally finite lattice.  For all $x\leq y$, 
$$
R_L(x,y) =  \sum_{z \vee x = y} t^{\rho(z)}.
$$
Moreover 
\begin{itemize}
\item[(i)]
if $L$ is semimodular, then $t^{\rho(y)-\rho(x)}$ divides $R_L(x,y)$, and 
\item[(ii)] if $L$ is geometric, then $t^{\rho(y)-\rho(x)+1}$ does not divide $R_L(x,y)$. 
\end{itemize}
\end{proposition}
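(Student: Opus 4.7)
The plan is to establish the closed form for $R_L(x,y)$ first by direct computation with the M\"obius function, and then deduce (i) and (ii) from the semimodular/matroid structure.

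To prove the formula, I would expand
\[
R_L(x,y) = (f_P \mu)(x,y) = \sum_{x \leq z \leq y} \left( \sum_{w \leq z} t^{\rho(w)} \right) \mu(z,y),
\]
and swap the order of summation. The variable $w$ ranges over $w \leq y$ (since $w \leq z \leq y$), and for fixed $w$ the inner sum runs over $z$ with $x \leq z \leq y$ and $w \leq z$, i.e.\ $w \vee x \leq z \leq y$. By the defining property of the M\"obius function,
\[
\sum_{w \vee x \leq z \leq y} \mu(z,y) = \delta_{w \vee x,\, y},
\]
so only those $w$ with $w \vee x = y$ survive, yielding $R_L(x,y) = \sum_{z \vee x = y} t^{\rho(z)}$.

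For (i), fix any $z$ with $z \vee x = y$. Semimodularity gives
\[
\rho(z) + \rho(x) \geq \rho(z \wedge x) + \rho(z \vee x) \geq \rho(y),
\]
so $\rho(z) \geq \rho(y) - \rho(x)$. Hence every monomial appearing in the sum is divisible by $t^{\rho(y)-\rho(x)}$.

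For (ii), I would produce a specific $z$ achieving equality using the matroid basis exchange property, which geometric lattices enjoy via their atomistic and semimodular structure. Choose an independent set of atoms $B_x$ joining to $x$ with $|B_x| = \rho(x)$, and an independent set $B_y$ of atoms joining to $y$ with $|B_y| = \rho(y)$. By basis exchange in the (cryptomorphic) matroid of atoms below $y$, one can augment $B_x$ to an independent set $B_x \cup B'$ of atoms joining to $y$, where $|B'| = \rho(y) - \rho(x)$. Setting $z = \bigvee B'$ gives
\[
z \vee x = \bigvee B' \vee \bigvee B_x = \bigvee (B_x \cup B') = y,
\]
while $\rho(z) \leq |B'| = \rho(y) - \rho(x)$. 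Combined with the lower bound from (i), $\rho(z) = \rho(y) - \rho(x)$, so this exponent occurs with coefficient $\geq 1$ in the sum, precluding divisibility by $t^{\rho(y)-\rho(x)+1}$.

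The main obstacle is the basis extension step in (ii); the other parts are essentially bookkeeping with $\mu$ and the semimodular rank inequality. One must be careful to justify extending $B_x$ \emph{within} the atoms of $[\hat{0},y]$, which is exactly the augmentation axiom for the simple matroid whose lattice of flats is $[\hat{0},y]$ (a geometric lattice, as an interval of a geometric lattice).
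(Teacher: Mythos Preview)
Your proof is correct and follows essentially the same approach as the paper: the formula is obtained by the same M\"obius/partition-by-$z\vee x$ computation (the paper phrases it as showing $S_L\zeta=f_L$ and then inverting, which is the dual of your direct expansion of $f_L\mu$), and parts (i) and (ii) are exactly the semimodular inequality and the atom-set extension argument the paper uses.
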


\begin{proof}
Let $S_L \in I(L)$ be defined by $S_L(x,y)= \sum_{z \vee x = y} t^{\rho(z)}$. Given $x\leq y$, for each $z \leq y$ there is a unique $w \in [x,y]$ such that $z\vee x=w$. It follows that  
$$
 f_L(x,y)=  \sum_{z \leq y} t^{\rho(z)} = \sum_{x\leq w \leq y} \left(\sum_{z \vee x =w}t^{\rho(z)}\right)= (S_L \zeta)(x,y). 
 $$
 Hence $f_L= S_L\zeta$, and thus $S_L=f_L\zeta^{-1}=  f_L\mu=R_L$. 
 
If $L$ is semimodular and $z \vee x = y$, then 
 $$
 \rho(x)+\rho(z) \geq \rho(x \vee z)+\rho(x \wedge z)= \rho(y)+\rho(x \wedge z)\geq \rho(y), 
 $$
 and hence $\rho(z) \geq \rho(y)-\rho(x)$. It follows that all terms of $R_L(x,y)$ are divisible by $t^{\rho(y)-\rho(x)}$. 
 
For $(ii)$ it suffices to prove that if $L$ is geometric, then  there exists $z \in L$ for which $z \vee x=y$ and $\rho(y)=\rho(x)+\rho(z)$. To see this let $A$ be  a minimal set (with respect to inclusion) of atoms whose join is equal to $x$. Then $|A|=\rho(x)$. The set $A$ may be extended to a minimal set of atoms $A \cup B$ whose join is $y$. Then $|A \cup B| =\rho(y)$. It follows that the join of $B$ has the desired properties of $z$. 
\end{proof}

\begin{theorem}\label{RP-rec}
Let $L$ be a semimodular lattice. If  $x <x'\leq y$, where $x'$ covers $x$, then 
$$
R_L(x,y)=R_L(x',y)- \sum_{w}R_L(x,w),
$$
where the sum is over all $w \in L$ for which $y$ covers $w$, $x \leq w$ and $x' \not \leq w$.   
\end{theorem}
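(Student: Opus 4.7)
The plan is to use the alternative join formula $R_L(u,v) = \sum_{z\vee u = v} t^{\rho(z)}$ from Proposition \ref{altformR} and to set up a bijection that interprets the recursion combinatorially. Rewriting what we want to prove as
$$R_L(x',y) = R_L(x,y) + \sum_{w} R_L(x,w),$$
it suffices to show that summing $t^{\rho(z)}$ over $\{z : z\vee x' = y\}$ equals the same sum over $\{z : z\vee x = y\}$ plus, for each $w$ in the indexing set, the sum over $\{z : z\vee x = w\}$.

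First I would show $\{z : z\vee x = y\} \subseteq \{z : z \vee x' = y\}$, which is immediate since $x \leq x' \leq y$. Then, for each $z$ with $z\vee x' = y$, I associate the element $w_z := z\vee x \in [x,y]$. Clearly $w_z = y$ exactly when $z$ already lies in the smaller set. So it remains to check that when $w = z\vee x < y$, the element $w$ satisfies all three conditions on the right-hand side, and conversely that every admissible $w$ together with every $z$ satisfying $z\vee x = w$ produces a $z$ with $z \vee x' = y$.

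The verification of the three conditions is the heart of the argument and is where semimodularity enters. That $x \leq w$ is trivial; if $x' \leq w$, then $y = z\vee x' \leq z \vee w = w < y$, a contradiction, so $x' \not\leq w$. For the covering relation, observe that $x \leq w \wedge x' \leq x'$ and $x' \not\leq w$ force $w \wedge x' = x$ because $x'$ covers $x$. Semimodularity then gives
$$\rho(w) + \rho(x') \geq \rho(w \vee x') + \rho(w \wedge x') = \rho(w \vee x') + \rho(x).$$
Since $w \vee x' = z \vee x \vee x' = z \vee x' = y$, this yields $\rho(y) - \rho(w) \leq \rho(x') - \rho(x) = 1$, and gradedness together with $w < y$ forces $y$ to cover $w$. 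The converse direction is easy: if $y$ covers $w$ and $x' \not\leq w$ with $x' \leq y$, then $w < w \vee x' \leq y$ leaves no choice but $w \vee x' = y$, so any $z$ with $z \vee x = w$ automatically satisfies $z \vee x' = w \vee x' = y$.

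Finally, these two maps are mutually inverse at the level of indexing sets (note that for each $z$ in the LHS with $z \vee x < y$, the associated $w = z\vee x$ is uniquely determined, giving disjointness of the sets on the RHS), so the $t^{\rho(z)}$-weighted counts match term by term. The main obstacle is the covering step — making sure that $y$ covers $w$ (not just that $\rho(w) = \rho(y) - 1$), but this is handled cleanly by the definition of gradedness built into semimodularity. Rearranging the resulting identity produces the stated formula.
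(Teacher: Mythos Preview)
Your proposal is correct and follows essentially the same route as the paper: both use the join formula from Proposition~\ref{altformR}, split $\{z : z\vee x' = y\}$ according to whether $z\vee x$ equals $y$ or is strictly below, and then use semimodularity applied to the pair $(x',w)$ with $w=z\vee x$ to force $y$ to cover $w$. Your write-up is slightly more explicit in places (you compute $w\wedge x' = x$ exactly, whereas the paper only uses $w\wedge x' \geq x$, and you spell out the converse direction more carefully), but the argument is the same.
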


\begin{proof}
   If  $x' \vee z=y$, then either $x \vee z = y$ or $x \vee z < y$. Hence, by Proposition \ref{altformR}, 
  $$
  R_L(x,y) =  \sum_{\substack{z \leq y\\ x' \vee z = y}} t^{\rho(z)} -  \sum_{\substack{x' \vee z = y \\ x \vee z < y}} t^{\rho(z)} = R_L(x',y)- \sum_{\substack{x' \vee z = y \\ x \vee z < y}} t^{\rho(z)}. 
  $$  
   If $x' \vee z = y$  and $w=x \vee z<y$, then by semimodularity,
        \begin{align*}
            \rho(x') + \rho(w) &\geq \rho(\overbrace{x' \vee (x \vee z)}^{= y}) + \rho(\overbrace{x' \wedge (x \vee z)}^{\geq x}) \smash{\text{\quad\raisebox{-0.5\baselineskip}{,}}}\\
            &\geq \rho(y) + \rho(x)
        \end{align*}
        which implies
       $ \rho(y) - \rho(w) \leq \rho(x') - \rho(x) = 1$.
    It follows that $x' \vee z = y$  and $w=x \vee z<y$ if and only if     
         $y$ covers $w$, $x \leq w$  and  $x' \nleq w$. Hence  
         $$
         \sum_{\substack{x' \vee z = y \\ x \vee z < y}} t^{\rho(z)}= \sum_{w}R_L(x,w),
$$
where the sum is over all $w \in L$ for which $y$ covers $w$, $x \leq w$ and $x' \not \leq w$.  
\end{proof}

\begin{theorem}\label{rankunigeo}
If $L$ is a  triangular semimodular lattice, then $L$ is a $\mathrm{TN}$-poset. 
\end{theorem}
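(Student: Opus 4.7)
The plan is to invoke the resolvability characterization of \cite[Theorem 2.6]{branden2024totally}: it suffices to exhibit a resolution $(R_{n,k}(t))_{0 \leq k \leq n \leq N}$ together with nonnegative scalars $(\lambda_{n,k})$ for the rank-generating polynomials of the intervals $[\hat{0}, x]$ of $L$.

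First I would define $R_{n,k}(t) := R_L(x, y)$ for any $x \leq y$ in $L$ with $\rho(x) = n - k$ and $\rho(y) = n$, which is unambiguous by Lemma \ref{triR} since $L$ is triangular. The boundary conditions are then immediate: $R_{n,0}(t) = R_L(y,y) = f_L(y,y)$ is the rank-generating polynomial $R_n(t)$ of $[\hat{0}, y]$, while $R_{n,n}(t) = R_L(\hat{0}, y) = t^n$ by the computation displayed just before Lemma \ref{triR}. The divisibility $t^k \mid R_{n,k}(t)$ follows from Proposition \ref{altformR}(i), using that $L$ is semimodular and $\rho(y) - \rho(x) = k$.

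For the Pascal-like recursion \eqref{r-pasc} I would apply Theorem \ref{RP-rec} to a triple $x < x' \leq y$ in $L$ with $x'$ covering $x$, $\rho(x) = n-k$, $\rho(x') = n+1-k$, and $\rho(y) = n+1$. Rearranging the identity of that theorem yields
\[
    R_{n+1, k}(t) \;=\; R_{n+1, k+1}(t) + \sum_{w} R_L(x, w),
\]
where the sum ranges over the $w \in L$ covered by $y$ such that $x \leq w$ and $x' \not\leq w$. Each such $w$ has rank $n$, so triangularity (Lemma \ref{triR}) forces $R_L(x, w) = R_{n,k}(t)$ for every index $w$, and the sum collapses to $\lambda_{n,k} R_{n,k}(t)$, where $\lambda_{n,k}$ counts the $w$'s that appear.

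The main obstacle is showing that this count $\lambda_{n,k}$ depends only on the pair $(n,k)$ and not on the particular triple $x < x' \leq y$. This I would settle by the following observation: the left-hand side $R_{n+1,k}(t) - R_{n+1,k+1}(t)$ depends only on $n$ and $k$ by Lemma \ref{triR}, while $R_{n,k}(t)$ is a nonzero monic polynomial of degree $n$. Hence $\lambda_{n,k}$ is uniquely determined as the quotient of these two quantities, and it is a nonnegative integer because it is a cardinality. This produces the desired resolution and, via \cite[Theorem 2.6]{branden2024totally}, concludes that $L$ is a $\mathrm{TN}$-poset.
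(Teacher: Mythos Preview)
Your proposal is correct and follows essentially the same route as the paper's proof: defining $R_{n,k}(t) := R_L(x,y)$ via Lemma~\ref{triR}, reading off the boundary conditions and the divisibility from Proposition~\ref{altformR}, and deriving \eqref{r-pasc} from Theorem~\ref{RP-rec}. Your additional remark that $\lambda_{n,k}$ is forced to be independent of the triple $x<x'\leq y$ because $R_{n+1,k}(t)-R_{n+1,k+1}(t)$ is already determined and $R_{n,k}(t)$ is monic (hence nonzero) is a detail the paper leaves implicit, and it is nice to see it spelled out.
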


\begin{proof}
We prove that the matrix $R(L)$ is resolvable. By Lemma \ref{triR}  the polynomials 
$$
R_{n,k}= R_L(x,y), 
$$
where $x \leq y$, $\rho(x)=n-k$ and $\rho(y)=n$, are well defined. Hence $R_{n,0}=R_n$, $R_{n,n}=t^n$, and $t^k$ divides $R_{n,k}$ for all $k \leq n$ by Proposition \ref{altformR}.  Moreover by Theorem~\ref{RP-rec}, 
$$
R_{n+1,k+1} = R_{n+1,k}- \lambda_{n,k} R_{n,k},
$$
for some numbers $\lambda_{n,k}$, 
which proves that $R(L)$ is resolvable. 
\end{proof}
By Theorem \ref{rankunigeo} and \cite[Theorem 6.6]{branden2024totally} we deduce the following result. 
\begin{corollary}
Let $L$ be a finite triangular semimodular lattice. Then the chain polynomial of $L$ is real-rooted, and all of its zeros lie in the interval $[-1,0]$. 
\end{corollary}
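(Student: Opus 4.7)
My plan is to combine the structural result of Theorem~\ref{rankunigeo} with the general real-rootedness theorem for chain polynomials of finite $\mathrm{TN}$-posets established in \cite[Theorem 6.6]{branden2024totally}. Theorem~\ref{rankunigeo} says that the triangular semimodular lattice $L$ is a $\mathrm{TN}$-poset, so the cited theorem immediately yields both real-rootedness of $c_L(t)$ and the location of its zeros in $[-1,0]$.

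If one wants to see the argument entirely inside the present framework, the natural approach is to work with the polynomials $\{p_n(t)\}_{n=0}^N$ associated to $R(L)$ via the recurrence \eqref{polreceq} and the chain-theoretic identity \eqref{pnchain}. I would prove by induction on $n$ that $\{p_k(t)\}_{k=0}^n$ is an interlacing sequence of real-rooted polynomials with nonnegative coefficients, each of whose zeros lies in $[-1,0]$. The inductive step uses that the leading $(n+1)\times(n+1)$ submatrix of $R(L)$ is $\mathrm{TP}_2$, so Lemma~\ref{basint}\,(i) ensures that the multiplication-by-$R$ map sends interlacing sequences to interlacing sequences; multiplying by $t$ in \eqref{polreceq} appends a simple root at $0$, and parts (ii)--(iv) of Lemma~\ref{basint} then splice $p_n(t)$ into the interlacing sequence.

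The most delicate point, and the main obstacle I anticipate, is the two-sided confinement of zeros to $[-1,0]$. Nonnegativity of the entries $r_{n,k}$ delivers the upper bound $0$ essentially for free via Descartes' rule, but the lower bound $-1$ really needs the resolvable array $\{R_{n,k}(t),\lambda_{n,k}\}$ produced inside the proof of Theorem~\ref{rankunigeo}. Using the identity \eqref{r-pasc2} together with the induction, one would show $(1+t)\,p_{n-1}(t) \preceq p_n(t)$, which forces every zero of $p_n(t)$ into $[-1,0]$. Finally, since $L$ is a finite lattice, its chain polynomial $c_L(t)$ is obtained from $p_N(t)$ at the top rank $N=\rho(\hat{1})$ (up to the contribution of the empty chain), so both real-rootedness and zero location transfer from $p_N(t)$ to $c_L(t)$.
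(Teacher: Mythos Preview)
Your proposal is correct and matches the paper's own proof exactly: the paper deduces the corollary in one line from Theorem~\ref{rankunigeo} together with \cite[Theorem~6.6]{branden2024totally}, precisely as in your first paragraph. The extra self-contained sketch you offer is essentially an outline of the proof of that cited theorem rather than a different route; one minor imprecision there is that $c_L(t)$ relates to $p_N(t)$ via $c_L(t)=\tfrac{(1+t)^2}{t}\,p_N(t)$ (Lemma~\ref{lemma5}), not merely ``up to the empty chain,'' but this does not affect the conclusion.
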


In \cite[Theorem 9.1]{triangular-posets} it was proved that any semimodular triangular lattice $L$ may be decomposed into geometric semimodular lattices as follows. If $P=(X,\leq_P)$ and $Q=(Y,\leq_Q)$ are posets on disjoint sets $X$ and $Y$, define the \emph{ordinal sum} $P\oplus Q =(X\cup Y, \leq)$ to be the poset with relation $z \leq w$ if and only if either $z \leq_P w$, $z \leq_Q w$, or $z \in P$ and $w \in Q$. If $L$ is a triangular semimodular lattice, then Theorem 9.1 in \cite{triangular-posets} says that there are triangular geometric lattices $L_1, L_2, \ldots, L_m$ such that 
\begin{equation}\label{LLL}
L = \left(L_1 \setminus \{\one_1\} \right) \oplus \left( L_2 \setminus \{\one_2\} \right) \oplus \cdots \oplus \left( L_{m-1} \setminus \{\one_{m-1}\} \right) \oplus L_m,  
\end{equation}
where $\one_i$ is the largest element of $L_i$. This suggests that $\mathrm{TN}$-posets with $\zero$ and $\one$ are closed under $\oplus$. This follows from the next theorem. 
\begin{theorem}\label{RSTT}
Suppose $\{R_n\}_{n=0}^N$  and  $\{S_n\}_{n=0}^M$ are resolvable sequences of polynomials for which $S_n(0)=1$ for each $0\leq n \leq M$. Then the sequence of polynomials $\{T_n\}_{n=0}^{N+M+1}$ defined by 
\begin{equation}\label{RST}
T_n = \begin{cases}
R_n, &\mbox{ if } n \leq N \\
t^{N+1}S_{n-N-1} + R_N, &\mbox{ if } n > N.
\end{cases}
\end{equation}
is resolvable.
\end{theorem}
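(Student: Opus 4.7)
The plan is to exhibit an explicit resolving array $(T_{n,k}(t))_{0 \le k \le n \le N+M+1}$ together with nonnegative scalars $(\lambda^T_{n,k})$ for the sequence $\{T_n(t)\}$, pieced together from the given resolutions of $\{R_n\}$ and $\{S_n\}$. For $0 \le n \le N$ I will copy the $R$-array, setting $T_{n,k}(t) := R_{n,k}(t)$ and $\lambda^T_{n,k} := \lambda^R_{n,k}$. For $N+1 \le n \le N+M+1$, writing $m := n-N-1$, I will define $T_{n,0}(t) := T_n(t)$, $T_{n,k}(t) := t^{N+1} S_{m,k}(t)$ for $1 \le k \le m$, and $T_{n,k}(t) := t^n$ for $m < k \le n$. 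Monicity of degree $n$, the diagonal condition $T_{n,n}=t^n$, and the divisibility $t^k \mid T_{n,k}$ are immediate from the corresponding properties of $R_{n,k}$ and $S_{m,k}$; the two formulas agree at $k=m$ since $t^{N+1}S_{m,m}=t^{n}$.

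To verify the recursion \eqref{r-pasc} at the interface row $n=N+1$, I take $\lambda^T_{N,0}:=1$ and $\lambda^T_{N,k}:=0$ for $1 \le k \le N$: then $T_{N+1,0}-T_{N+1,1} = (t^{N+1}+R_N)-t^{N+1} = R_N = T_{N,0}$, while $T_{N+1,k}-T_{N+1,k+1}=0$ for $k \ge 1$.

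The main step is verifying \eqref{r-pasc} between rows $n-1$ and $n$ when $n > N+1$ (so $m \ge 1$). Here the hypothesis $S_j(0)=1$ plays a decisive role: evaluating $S_{m,0}=S_{m,1}+\lambda^S_{m-1,0}S_{m-1,0}$ at $t=0$ and using $S_{m,1}(0)=0$ (since $t \mid S_{m,1}$) forces $\lambda^S_{m-1,0}=1$. This yields the telescoping identity $S_m - S_{m,1} = S_{m-1}$, whence
\[
T_n(t) - t^{N+1}S_{m,1}(t) = t^{N+1}S_{m-1}(t) + R_N(t) = T_{n-1}(t).
\]
Thus $T_{n,0}-T_{n,1}=T_{n-1,0}$, and I take $\lambda^T_{n-1,0}:=1$. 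For $1 \le k \le m-1$ the resolvability of $S$ gives $T_{n,k}-T_{n,k+1}= t^{N+1}\lambda^S_{m-1,k}S_{m-1,k} = \lambda^S_{m-1,k}T_{n-1,k}$, so I set $\lambda^T_{n-1,k}:=\lambda^S_{m-1,k}\ge 0$. For $m \le k \le n-1$ both $T_{n,k}$ and $T_{n,k+1}$ equal $t^n$, so $\lambda^T_{n-1,k}:=0$ completes the verification.

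The main obstacle is locating the correct shape of the array. The naive guess $T_{n,k}:=t^{N+1}S_m+R_{N,k}$ for $k \le N$ leads to a dead end: it would require $R_{N,k}-R_{N,k+1}=\lambda^R_{N-1,k}R_{N-1,k}$ to equal a nonnegative scalar multiple of $T_{n-1,k}=t^{N+1}S_{m-1}+R_{N,k}$ whenever $m \ge 1$, which is impossible by degree considerations unless the $R$-resolution is trivial. The remedy is to place the $R_N$ summand only in the entry $T_{n,0}$ and use the $S$-resolution alone for $k \ge 1$; the normalization $S_j(0)=1$ is precisely what allows an entire copy of $T_{n-1}$ to drop off in a single step, passing the $R$-part from row $n$ to row $n-1$ without residue.
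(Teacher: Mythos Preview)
Your construction is correct and is essentially the same as the paper's: you build the resolving array for $\{T_n\}$ by copying the $R$-array on rows $0,\dots,N$, placing $R_N$ only in the first column of rows $>N$, and using $t^{N+1}S_{m,k}$ for the remaining entries, with the key observation that $S_j(0)=1$ forces $\lambda^S_{m-1,0}=1$ so that the $R_N$-term passes cleanly from row to row. Your handling of the indices (in particular the explicit treatment of the range $m<k\le n$) is in fact a bit cleaner than the paper's exposition.
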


\begin{proof}
Suppose $\{R_{n,k}\}_{n,k}$ and $\{\lambda_{n,k}\}_{n,k}$ resolve $\{R_n\}_{n=0}^N$, and that $\{S_{n,k}\}_{n,k}$ and $\{\mu_{n,k}\}_{n,k}$ resolve $\{S_n\}_{n=0}^M$. 

Define $\{T_{n,k}\}_{0\leq k \leq n \leq N+M+1}$ and $\{\gamma_{n,k}\}_{n,k}$ as follows. If $n \leq N$, then $T_{n,k}= R_{n,k}$ and $\gamma_{n,k} = \lambda_{n,k}$. If $n=N+1$, then 
$T_{N+1,0}= t^{N+1}+ R_N$ and $T_{N+1,k}= t^{N+1}$ for $k>0$. Also  $\gamma_{N,0}=1$ and $\gamma_{N,k}=0$ for $k>0$. Moreover for $m > 0$, $T_{N+1+m,0}= t^{N+1}S_m+R_N$,
$$
T_{N+1+m,k} = t^{N+1}S_{m,k}, \ \ k>0,   
$$
and $\gamma_{N+m,k}= \mu_{m,k}$. 

To prove that $\{T_{n,k}\}_{0\leq k \leq n \leq N+M+1}$ and 
$\{\gamma_{n,k}\}_{n,k}$ resolve $\{T_n\}_{n=0}^{N+M+1}$ it remains to prove \eqref{r-pasc} for $\{T_{n,k}\}_{0\leq k \leq n \leq N+M+1}$. For $n<N$, \eqref{r-pasc} follows from that of $\{R_{n,k}\}_{n,k}$. For $n=N$ and $k=0$,  \eqref{r-pasc} reads 
$$
t^{N+1}+R_N= t^{N+1} + R_N,  
$$
and for $n=N$ and $k>0$, \eqref{r-pasc} reads 
$$
t^{N+1}= t^{N+1}+ 0\cdot R_{N,k}. 
$$
For $n=N+m$, $m >0$, and $k=0$, \eqref{r-pasc} reads
$$
t^{N+1}S_m +R_N= t^{N+1}S_{m,1} + \mu_{m,0}(t^{N+1}S_{m-1}+R_N). 
$$
Since $S_n(0)=1$ for all $n$, it follows that $\mu_{m,0}=1$. Hence the above equation becomes 
$$
t^{N+1}S_m +R_N= t^{N+1}(S_m-S_{m-1})+ t^{N+1} S_{m-1} +R_N. 
$$
For $n=N+m$, $m >0$, and $k>0$, \eqref{r-pasc} reads
$$
t^{N+1}S_{m,k} = t^{N+1}S_{m,k+1}+\mu_{m,k}t^{N+1}S_{m-1,k},
$$
which follows from the resolvability of $\{S_{n,k}\}_{n,k}$. 
\end{proof}
\begin{corollary}\label{oplus}
Suppose $P$ and $Q$ are finite $\mathrm{TN}$-posets such that $P$ has a largest element $\one$ and $Q$ has a least element $\zero$. Then $P\oplus Q$, $(P\setminus \{\one\}) \oplus Q$ and $(P\setminus \{\one\}) \oplus (Q\setminus \{\zero\})$ are $\mathrm{TN}$-posets. 
\end{corollary}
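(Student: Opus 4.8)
The plan is to reduce the three assertions to Theorem~\ref{RSTT}. Recall that a finite quasi-rank uniform poset $L$ is a $\mathrm{TN}$-poset precisely when the sequence of quasi-rank generating polynomials of its principal order ideals $[\zero,x]$ (one polynomial per rank) is resolvable, equivalently when the matrix $R(L)$ is totally nonnegative \cite[Theorem 2.6]{branden2024totally}. I would first prove the statement for $P\oplus Q$ using Theorem~\ref{RSTT}, and then deduce the other two cases by deleting from $P\oplus Q$ a suitable single element; at the level of matrices this deletion turns out to erase one row together with the column of the same index, which preserves total nonnegativity because it yields a submatrix.

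Write $N=\rho(P)$ and $M=\rho(Q)$, and let $\{R_n(t)\}_{n=0}^{N}$ and $\{S_n(t)\}_{n=0}^{M}$ be the quasi-rank generating polynomials of the principal order ideals of $P$ and of $Q$; these sequences are resolvable because $P$ and $Q$ are $\mathrm{TN}$-posets, and $S_n(0)=1$ for every $n$ since $Q$ has a least element (each ideal of $Q$ has a unique element of rank $0$). As $\one$ is the largest element of $P$ it is the unique element of maximum rank $N$, so $[\zero,\one]_P=P$ and $R_N(t)=f_P(t)$. One checks that $P\oplus Q$ is quasi-rank uniform: for $x\in P$ we have $[\zero,x]_{P\oplus Q}=[\zero,x]_P$, while for $z\in Q$ of rank $j$ in $Q$ we have $\rho_{P\oplus Q}(z)=N+1+j$ and $[\zero,z]_{P\oplus Q}=P\cup[\zero,z]_Q$, so the rank distribution of a principal order ideal depends only on its rank. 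Extracting the corresponding generating polynomial gives $f_P(t)+t^{N+1}S_j(t)=R_N(t)+t^{N+1}S_j(t)$, so the sequence of quasi-rank generating polynomials of principal order ideals of $P\oplus Q$ is exactly the sequence $\{T_n(t)\}_{n=0}^{N+M+1}$ produced by Theorem~\ref{RSTT} from $\{R_n(t)\}$ and $\{S_n(t)\}$. By that theorem this sequence is resolvable, so $R(P\oplus Q)$ is totally nonnegative and $P\oplus Q$ is a $\mathrm{TN}$-poset.

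Next, $(P\setminus\{\one\})\oplus Q=(P\oplus Q)\setminus\{\one\}$. In $P\oplus Q$ the element $\one$ is the unique element of rank $N$, and $\one\le z$ for every $z$ of rank $>N$. Deleting $\one$ leaves the rank of each $x\in P\setminus\{\one\}$ unchanged (because $\one\notin[\zero,x]$) and decreases the rank of every element of rank $>N$ by exactly $1$ (the longest chain from $\zero$ to such an element that avoids $\one$ is one step shorter). Hence $(P\setminus\{\one\})\oplus Q$ is quasi-rank uniform and $R((P\setminus\{\one\})\oplus Q)$ is obtained from $R(P\oplus Q)$ by deleting the row and the column indexed by $N$. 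A submatrix of a totally nonnegative matrix is totally nonnegative, so $(P\setminus\{\one\})\oplus Q$ is a $\mathrm{TN}$-poset. Applying the same reasoning once more with the least element $\zero$ of $Q$ in the role of $\one$: in $(P\setminus\{\one\})\oplus Q$ the element $\zero$ is now the unique element of rank $N$ and lies below every element of larger rank, and $(P\setminus\{\one\})\oplus(Q\setminus\{\zero\})=\bigl((P\setminus\{\one\})\oplus Q\bigr)\setminus\{\zero\}$, so its matrix is obtained from $R(P\oplus Q)$ by deleting the rows and columns indexed by $N$ and $N+1$; being again a submatrix of a totally nonnegative matrix, it is totally nonnegative.

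The steps that need care are the rank bookkeeping in the ordinal sums — above all, verifying that $\one$ (and then $\zero$) is genuinely the unique element of its rank and that removing it lowers every larger rank by exactly one — and checking that the matrix obtained really is, column by column, a submatrix of $R(P\oplus Q)$ rather than merely resembling one. The degenerate cases $P=\{\one\}$ and $Q=\{\zero\}$ should also be noted, but the same argument covers them without change.
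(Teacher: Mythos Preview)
Your proposal is correct and follows essentially the same route as the paper: both prove that $P\oplus Q$ is $\mathrm{TN}$ by identifying its quasi-rank generating polynomials with the sequence $\{T_n(t)\}$ of Theorem~\ref{RSTT}, and then deduce the other two cases by deleting the appropriate rank levels. The only difference is packaging: the paper cites the general fact that $\mathrm{TN}$-posets are closed under quasi-rank selection \cite[Proposition~5.9]{branden2024totally}, whereas you verify by hand that removing the unique element of rank~$N$ (and then of rank~$N+1$) produces a matrix obtained from $R(P\oplus Q)$ by deleting the corresponding row and column --- which is precisely the special case of that proposition you need.
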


\begin{proof}
It suffices to prove that $P\oplus Q$ is $\mathrm{TN}$, since $\mathrm{TN}$-posets are closed under quasi-rank selection \cite[Proposition 5.9]{branden2024totally}. 

If the rank generating polynomials of $P$ and $Q$ are $\{R_n\}_{n=0}^N$  and  $\{S_n\}_{n=0}^M$, respectively, then the rank generating polynomial of $P \oplus Q$ is given by the polynomials $\{T_n\}_{n=0}^{N+M+1}$ defined in \eqref{RST}. Also $S_n(0)=1$ for all $n$, since $Q$ has a least element. The proof now follows from Theorem \ref{RSTT}. 
\end{proof}

\section{Dowling lattices}
\label{dowling}
In this section, we prove that Dowling lattices have real-rooted chain polynomials. These lattices were studied by Dowling in \cite{dowling1973q} and \cite{dowlingGroups}, where the interval structures, Möbius functions, and characteristic polynomials were determined, as well as Stirling-like identities and recursions for their Whitney numbers of the first and second kinds.

Let $G$ be a finite group. The \emph{rank-}$n$ \emph{Dowling geometry associated to $G$}, $Q_n(G)$, was first described by Dowling in \cite{dowlingGroups} and can be understood as a ``$G$-labeled'' version of $\Pi_n^A$, the usual partition lattice on $[n]$. It is defined as follows: a $G$\emph{-labeled set} is a pair $(B, \alpha)$ where $B$ is a set and $\alpha \colon B \to G$ is a function. The $G$-labeled sets $(B, \alpha)$ and $(B, \beta)$ are said to be \emph{equivalent} if there is an element $g \in G$ such that $\beta (b) = g \alpha(b)$ for all $b \in B$. This defines an equivalence relation on the set of all functions $\alpha \colon B \to G$, and we denote the equivalent class of $(B, \alpha)$ by $[B, \alpha]$. A \emph{partial} $G$\emph{-partition} is a set $\gamma = \{ [S_1, \alpha_1], \ldots, [S_k, \alpha_k] \}$ such that $S_1, \ldots, S_k$ are pairwise disjoint nonempty subsets of $[n]$. A partial ordering $\leq$ on the set of all $G$-partitions of $[n]$ was defined by Dowling in \cite{dowlingGroups} and this poset is a geometric lattice denoted as $Q_n(G)$.

For example, $Q_n(\{ 0 \}) = \Pi_n^A$, where $\{ 0 \}$ denotes the trivial group, and  $Q_n(G) = \Pi_n^B$ if $G$ is a group of order $2$, where $\Pi_n^B$ is the type $B$ partition lattice \cite{vic1997}.

Recall that, given a poset $P$, the \emph{dual poset} $P'$ is the poset on the same ground set as $P$ such that $y \leq_{P'} x$ if and only if $x \leq_P y$. Clearly $c_P= c_{P'}$. 

Sometimes it is more convenient working with $P'$ instead of $P$. For example, the poset $Q_n(G)'$ is rank uniform \cite[Theorem 2]{dowlingGroups}, and its rank generating polynomial may be written as
$$
    R^{G'}_n  = \sum_{i=0}^n W_m(n,i)t^i,
$$
where the order of $G$ is $m$, $W_m(n,0) = W_m(n,n) = 1$ for every $m,n \in \mathbb{Z}_{\geq 0}$, and the coefficients $W_m(n,i)$ satisfy the recursion\footnote{There is a typo in the recursion given in \cite{dowlingGroups}, $i-1$ should be $i$.} \cite[Theorem 7]{dowlingGroups}
\begin{equation}
\label{dowling-rec2}
    W_m(n,i) = W_m(n-1,i-1) + (1+mi) \cdot W_m(n-1,i).
\end{equation}

Since $Q_n(G)'$ is rank uniform, to show that the chain polynomial of $Q_n(G)$ is real-rooted, it suffices to prove that, for each nonnegative number $N$, the sequence of polynomials $\{ R^{G'}_n \}_{n=0}^N$ is resolvable.

In fact, define the linear operator $\alpha \colon \mathbb{R}[t] \to \mathbb{R}[t]$ on the basis $\{ 1, t, t^2, \ldots \}$ by
\[
    \alpha (t^i) = (1 + mi)t^i
\]
for each $i \in \NN$. We claim that
\begin{equation}
\label{dowling-resolvable}
    R_n^{G'} = (t + \alpha)^n 1
\end{equation}
for each nonnegative integer $n$. The proof follows by induction, the case $n=0$ being trivial. Assume that $R_{n-1}^{G'} = (t + \alpha)^{n-1} 1$ for some $n-1 > 0$. Then
\begin{align*}
    (t + \alpha)^n 1 &= (t + \alpha) R_{n-1}^{G'}\\
                     &= \sum_{i=1}^n W_m(n-1,i-1) t^i + \sum_{i=0}^{n-1} (1 + mi) \cdot W_m(n-1,i) t^i = R_n^{G'} \smash{\text{\quad\raisebox{1.0\baselineskip}{,}}}\\
\end{align*}
where the third equality follows from \eqref{dowling-rec2}, which verifies \eqref{dowling-resolvable} by induction. Therefore, $\{ R^{G'}_n \}_{n=0}^N$ is resolvable by \cite[Theorem 2.6]{branden2024totally}, from which the next theorem follows. 

\begin{theorem}\label{dowling-TN}
The dual of any Dowling lattice is a $\mathrm{TN}$-poset. 
\end{theorem}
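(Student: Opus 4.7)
The plan is to apply \cite[Theorem 2.6]{branden2024totally}. Since Dowling's \cite[Theorem 2]{dowlingGroups} gives that $Q_n(G)'$ is rank uniform, it suffices to show that the sequence of rank generating polynomials $\{R_n^{G'}(t)\}_{n \geq 0}$ is resolvable. The operator identity $R_n^{G'}(t) = (t+\alpha)^n(1)$ verified in the discussion above, with $\alpha(t^i) = (1+mi)\,t^i$ and $m = |G|$, provides the essential structural input.

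To build an explicit resolving array, I would introduce the shifted diagonal operators $\alpha_k$ on $\mathbb{R}[t]$ defined by $\alpha_k(t^i) = (1+m(i+k))\,t^i$, and the operators $L_k := t + \alpha_k$. Note that $L_0 = t+\alpha$, so $R_n^{G'}(t) = L_0^n(1)$, and a direct computation on monomials gives the commutation $L_k\,t = t\,L_{k+1}$. I would then define
\[
    R_{n,k}(t) := t^k\,L_k^{\,n-k}(1), \qquad \lambda_{n,k} := 1 + mk,
\]
for $0 \leq k \leq n$. The monicity and degree, divisibility by $t^k$, and the boundary conditions $R_{n,0}(t) = R_n^{G'}(t)$ and $R_{n,n}(t) = t^n$ are all immediate from the definition.

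The main task, and the only real obstacle, is verifying the Pascal-type recursion \eqref{r-pasc}. After canceling the common factor $t^k$ from both sides, it reduces to the operator identity
\[
    L_k^{\,n-k+1}(1) \;=\; t\,L_{k+1}^{\,n-k}(1) \;+\; (1+mk)\,L_k^{\,n-k}(1).
\]
I would prove this by induction on $j := n-k$. The base case $j=0$ reads $L_k(1) = t + (1+mk)$, which is just the defining formula for $L_k$. The inductive step applies $L_k$ to both sides of the hypothesis at stage $j$ and then uses the commutation $L_k\,t = t\,L_{k+1}$ to re-package the result at stage $j+1$. Once resolvability of $\{R_n^{G'}(t)\}_{n=0}^{N}$ is established for every $N$, \cite[Theorem 2.6]{branden2024totally} immediately yields that $R(Q_n(G)')$ is totally nonnegative, so $Q_n(G)'$ is a $\mathrm{TN}$-poset, which is the content of the theorem.
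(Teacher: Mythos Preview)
Your proposal is correct and follows essentially the same approach as the paper: both rely on the operator identity $R_n^{G'}(t)=(t+\alpha)^n(1)$ established just before the theorem, and then invoke \cite[Theorem~2.6]{branden2024totally} to pass from resolvability to total nonnegativity. The only difference is cosmetic: the paper simply cites Theorem~2.6 to conclude that the sequence is resolvable, whereas you unpack this step by exhibiting the explicit resolving array $R_{n,k}(t)=t^kL_k^{\,n-k}(1)$ with $\lambda_{n,k}=1+mk$ and verifying the Pascal recursion via the commutation $L_k\,t=t\,L_{k+1}$.
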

The following corollary now follows from  \cite[Theorem 6.6]{branden2024totally}.

\begin{corollary}
\label{dowling-th}
    The chain polynomial $c_{Q_n(G)}$ of $Q_n(G)$ is $[-1,0]$-rooted, and the zeros of $c_{Q_n(G)}$ interlace the zeros of $c_{Q_{n+1}(G)}$ for every $n \geq 0$. Moreover, if $S$ is a set of nonnegative integers, then the chain polynomial of the rank selected subposet $Q_n(G)_S$ is real-rooted and all of its zeros lie in the interval $[-1,0]$ for any natural number $n$.
\end{corollary}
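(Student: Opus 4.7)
The plan is to derive the corollary as a direct application of Theorem~\ref{dowling-TN} together with the general machinery of $\mathrm{TN}$-posets developed in \cite{branden2024totally}. Since $c_P(t)=c_{P'}(t)$ for any finite poset $P$, it suffices to analyze the chain polynomial of the dual $Q_n(G)'$, which is a $\mathrm{TN}$-poset by Theorem~\ref{dowling-TN}. Invoking \cite[Theorem 6.6]{branden2024totally}, which asserts that the chain polynomial of any finite $\mathrm{TN}$-poset is real-rooted with all zeros in $[-1,0]$, settles the $[-1,0]$-rootedness claim.

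For the interlacing assertion, I would emphasize that the family $\{R^{G'}_n(t)\}_{n\ge 0}$ is not merely a disjoint collection of resolvable sequences, but is generated uniformly by the single operator identity \eqref{dowling-resolvable}: each $R^{G'}_n(t)=(t+\alpha)^n\,1$ is obtained by iterating the common operator $t+\alpha$. Consequently all the matrices $R(Q_n(G)')$ fit together as truncations of one infinite totally nonnegative matrix, and the corresponding chain polynomials $c_{Q_n(G)}(t)$ arise as images of the monomials $t^n$ under a single subdivision operator $\EE$. The interlacing portion of \cite[Theorem 6.6]{branden2024totally} then yields $c_{Q_n(G)}(t)\preceq c_{Q_{n+1}(G)}(t)$ for every $n\ge 0$.

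For the rank-selection statement, the key ingredient is \cite[Proposition 5.9]{branden2024totally}, which shows that the class of $\mathrm{TN}$-posets is closed under (quasi-)rank selection (a fact already used in the proof of Corollary~\ref{oplus}). Applying this to $Q_n(G)'$, and noting that $c_{Q_n(G)_S}(t)=c_{(Q_n(G)_S)'}(t)$ together with the fact that rank selection commutes with dualization up to the obvious reindexing $S\mapsto n-S$, one sees that $Q_n(G)_S$ admits a $\mathrm{TN}$ dual. A second application of \cite[Theorem 6.6]{branden2024totally} then gives real-rootedness with zeros in $[-1,0]$.

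There is no substantive obstacle: all the content is already packaged in Theorem~\ref{dowling-TN}, and the corollary is essentially bookkeeping. The only small point requiring care is to verify that the interlacing statement is indeed about the entire sequence indexed by $n$ rather than one fixed rank, which is why the explicit operator description $R^{G'}_n(t)=(t+\alpha)^n\,1$ is important: it guarantees that the resolving data $(\lambda_{n,k})$ are compatible across all $n$, so that \cite[Theorem 6.6]{branden2024totally} applies to the whole family at once.
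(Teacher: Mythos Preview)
Your proposal is correct and follows essentially the same approach as the paper: the paper's proof is a one-line appeal to \cite[Theorem 6.6]{branden2024totally} after Theorem~\ref{dowling-TN}, and you have simply unpacked the bookkeeping (dualization, the uniform operator description for interlacing, and the rank-selection closure via \cite[Proposition 5.9]{branden2024totally}) that the paper leaves implicit.
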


Dowling lattices are examples of geometric lattices whose duals are rank uniform. Such lattices were studied in \cite{Bry}, where they were called \emph{upper combinatorially uniform}. From Propositions \ref{prop-triangular} and \ref{alleq} it follows that perfect matroid designs are upper combinatorially uniform. We offer the following conjecture.
\begin{conjecture}
The dual of  any upper combinatorially uniform geometric lattice is a $\mathrm{TN}$-poset. 
\end{conjecture}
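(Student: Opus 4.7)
The plan is to verify that the matrix $R(L)$ attached to every upper combinatorially uniform geometric lattice $L$ is totally nonnegative, by combining a structural decomposition of such lattices with the resolvability framework from \cite{branden2024totally}. The strategy mirrors Theorems~\ref{rankunigeo} and \ref{dowling-TN}, but applied to the ascending, rather than descending, uniformity data.

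First, I would consult the structural classification of upper combinatorially uniform geometric lattices developed in \cite{Bry}, looking for a decomposition analogous to the triangular case \eqref{LLL}. Combined with the closure of the TN property under ordinal sums (Corollary~\ref{oplus}), this would reduce the conjecture to a short list of ``building block'' families — conjecturally perfect matroid designs (already TN by Theorem~\ref{rankunigeo}), Dowling lattices, and any further families isolated by \cite{Bry}. Within each block, the goal is to exhibit an explicit resolvable sequence.

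Second, for each building block I would try to reproduce the operator-theoretic construction of Section~\ref{dowling}. Concretely: identify a linear operator $\alpha$ on $\RR[t]$ whose action on the basis $\{t^i\}$ is dictated by the local matroid geometry (in the Dowling case $\alpha(t^i)=(1+mi)t^i$, encoding the atom counts above each rank-$i$ flat); show that the candidate rank generating polynomials $R_n(t)$ satisfy the recursion $R_{n+1}(t)=(t+\alpha)R_n(t)$; and deduce resolvability of $\{R_n(t)\}$ via \cite[Theorem 2.6]{branden2024totally}. The divisibility $t^k\mid R_{n,k}(t)$ required by the resolvability axioms would follow from semimodularity through Proposition~\ref{altformR}, and nonnegativity of the Pascal coefficients $\lambda_{n,k}$ in \eqref{r-pasc} would stem from their combinatorial interpretation in terms of flat counts.

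The main obstacle is conceptual rather than computational. Upper combinatorial uniformity controls only the contractions $[x,\hat 1]$, whereas the matrix $R(L)$ records the restrictions $[\hat 0,x]$, and in general these two data sets are not mutually determined: the partition lattices $\Pi_n$ are upper combinatorially uniform but possess rank-$k$ elements whose lower intervals are genuinely non-isomorphic (e.g.\ in $\Pi_4$, the rank-$2$ flats of ``$1{+}3$'' and ``$2{+}2$'' type have different atom counts). Thus the first, and hardest, step must be to upgrade upper combinatorial uniformity to \emph{quasi-rank uniformity} of $L$ itself — or, failing that, to show that the various lower rank generating polynomials at a fixed rank share enough structure (for instance, all interlacing a common resolvable envelope in $\mathcal{P}_n$) that a TN matrix $R(L)$ can still be meaningfully extracted. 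Bridging this asymmetry, by exploiting semimodularity together with the matroid-theoretic origin of each building block, is where the bulk of the work will lie.
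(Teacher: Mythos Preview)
This statement is presented in the paper as an \emph{open conjecture}, not a theorem: the authors offer it immediately after Theorem~\ref{dowling-TN} with the words ``We offer the following conjecture,'' and provide no proof. There is therefore no argument in the paper to compare your proposal against.

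As for the proposal itself, you have in fact located the reason the statement is only a conjecture. Your own example of $\Pi_4$ shows that an upper combinatorially uniform geometric lattice need not be quasi-rank uniform, so the matrix $R(L)$ --- and hence the notion ``$L$ is a $\mathrm{TN}$-poset'' --- is not even defined for $L$ as written. The intended reading, consistent with Theorem~\ref{dowling-TN} (which proves that the \emph{dual} of a Dowling lattice is $\mathrm{TN}$), is almost certainly that the rank-uniform dual $L'$ is a $\mathrm{TN}$-poset. Your plan to ``upgrade upper combinatorial uniformity to quasi-rank uniformity of $L$ itself'' is therefore provably impossible, and the alternative you sketch (extracting a single $\mathrm{TN}$ matrix from a family of non-uniform lower intervals) has no obvious meaning within the resolvability framework.

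Even under the corrected reading (prove $R(L')$ is $\mathrm{TN}$), your outline is a programme rather than a proof: it presupposes a decomposition of upper combinatorially uniform geometric lattices into ordinal summands of a short list of known types, and then a case-by-case construction of an operator $\alpha$ with $R_{n+1}=(t+\alpha)R_n$ for each type. Neither ingredient is supplied by \cite{Bry} in the required form, and the paper gives no indication that such a uniform mechanism exists beyond the perfect-matroid-design and Dowling cases already handled. In short, you have correctly diagnosed why this is hard, but nothing in the proposal closes the gap.
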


\section{Generalized paving lattices}
\label{gen-TN-posets}

In this section we introduce a construction on posets and prove that posets that arise from this construction using $\mathrm{TN}$-posets have real-rooted chain polynomials. We use this construction to prove that chain polynomials of paving lattices (lattices of flats of paving matroids) are real-rooted.

\begin{definition}
\label{def-construction}
    Let $P$ be a locally finite poset with a least element $\zero$ and with quasi-rank function $\rho$. Fix $1 \leq d < n$ and $y \in P$ such that $\rho(y) = n$. Suppose $\mathcal{H} \subseteq \langle y \rangle = \{ z \in P \colon z \leq y \}$ satisfies
    \begin{enumerate}[label=(\roman*)]
        \item $y \notin \mathcal{H}$,
        \item $\rho(h) \geq d$ for all $h \in \mathcal{H}$,
        \item $\mathcal{H}$ is an anti-chain, and 
        \item for all $x < y$ in $P$ with $\rho(x) \leq d-1$ there exists $h \in \mathcal{H}$ such that $x \leq h$. 
    \end{enumerate}
    We define the poset $P(\mathcal{H}, y, d)$ as the poset of quasi-rank $d+1$ obtained by adjoining $\mathcal{H}$ as the set of elements of quasi-rank $d$ and $y$ as the only element of rank $d+1$ to the set $\{ x \in \langle y \rangle \colon \rho(x) \leq d-1 \}$.
\end{definition}

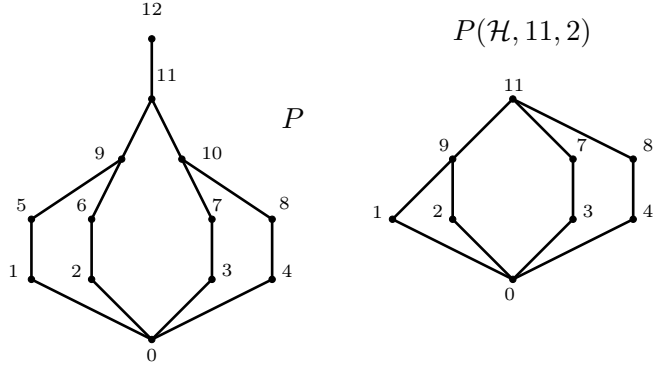
\begin{figure}[H]
\centering
\begin{tikzpicture}[line cap=round,line join=round,>=triangle 45,x=1cm,y=1cm,scale=0.4]
    \clip(-21.322578453639682,-2.8501106263440157) rectangle (1.7,10.5);
    \draw [line width=1pt] (-20,0)-- (-16,-2);
    \draw [line width=1pt] (-18,0)-- (-16,-2);
    \draw [line width=1pt] (-18,0)-- (-18,2);
    \draw [line width=1pt] (-14,0)-- (-14,2);
    \draw [line width=1pt] (-14,0)-- (-16,-2);
    \draw [line width=1pt] (-12,0)-- (-16,-2);
    \draw [line width=1pt] (-20,2)-- (-20,0);
    \draw [line width=1pt] (-12,2)-- (-12,0);
    \draw [line width=1pt] (-20,2)-- (-17,4);
    \draw [line width=1pt] (-17,4)-- (-18,2);
    \draw [line width=1pt] (-15,4)-- (-14,2);
    \draw [line width=1pt] (-15,4)-- (-12,2);
    \draw [line width=1pt] (-16,6)-- (-17,4);
    \draw [line width=1pt] (-16,6)-- (-15,4);
    \draw [line width=1pt] (-16,8)-- (-16,6);
    \draw [line width=1pt] (-8,2)-- (-4,0);
    \draw [line width=1pt] (-6,2)-- (-4,0);
    \draw [line width=1pt] (-2,2)-- (-4,0);
    \draw [line width=1pt] (0,2)-- (-4,0);
    \draw [line width=1pt] (0,2)-- (0,4);
    \draw [line width=1pt] (0,4)-- (0,4);
    \draw [line width=1pt] (-8,2)-- (-6,4);
    \draw [line width=1pt] (-6,2)-- (-6,4);
    \draw [line width=1pt] (-4,6)-- (-2,4);
    \draw [line width=1pt] (-2,2)-- (-2,4);
    \draw [line width=1pt] (-6,4)-- (-4,6);
    \draw [line width=1pt] (-4,6)-- (0,4);
    \draw (-12,6) node[anchor=north west] {\large $P$};
    \draw (-6.3,9) node[anchor=north west] {\large $P(\mathcal{H},11,2)$};
    \begin{scriptsize}
        \draw [fill=black] (-20,0) circle (3pt);
        \draw (-20.6,0.28733458478751817) node {1};
        \draw [fill=black] (-18,0) circle (3pt);
        \draw (-18.5,0.28733458478751817) node {2};
        \draw [fill=black] (-14,0) circle (3pt);
        \draw (-13.5,0.2974390782211141) node {3};
        \draw [fill=black] (-12,0) circle (3pt);
        \draw (-11.5,0.2671255979203263) node {4};
        \draw [fill=black] (-16,-2) circle (3pt);
        \draw (-15.997510414134629,-2.5) node {0};
        \draw [fill=black] (-20,2) circle (3pt);
        \draw (-18.332338097147282,2.4699051664442373) node {6};
        \draw [fill=black] (-18,2) circle (3pt);
        \draw (-20.332338097147282,2.4699051664442373) node {5};
        \draw [fill=black] (-14,2) circle (3pt);
        \draw (-13.832338097147282,2.4699051664442373) node {7};
        \draw [fill=black] (-12,2) circle (3pt);
        \draw (-11.591951277086803,2.5507411139130047) node {8};
        \draw [fill=black] (-17,4) circle (3pt);
        \draw (-17.72537879127953,4.207878037022736) node {9};
        \draw [fill=black] (-15,4) circle (3pt);
        \draw (-14,4.207878037022736) node {10};
        \draw [fill=black] (-16,6) circle (3pt);
        \draw (-15.5,6.8) node {11};
        \draw [fill=black] (-16,8) circle (3pt);
        \draw (-16,9) node {12};
        \draw [fill=black] (-8,2) circle (3pt);
        \draw (-8.5,2.2273973240379354) node {1};
        \draw [fill=black] (-6,2) circle (3pt);
        \draw (-6.5,2.257710804338723) node {2};
        \draw [fill=black] (-2,2) circle (3pt);
        \draw (-1.5,2.277919791205915) node {3};
        \draw [fill=black] (0,2) circle (3pt);
        \draw (0.5,2.267815297772319) node {4};
        \draw [fill=black] (-4,0) circle (3pt);
        \draw (-4.1,-0.5) node {0};
        \draw [fill=black] (-6,4) circle (3pt);
        \draw (-6.286747374522942,4.480699359729826) node {9};
        \draw [fill=black] (-2,4) circle (3pt);
        \draw (-1.710101548955979,4.460490372862634) node {7};
        \draw [fill=black] (0,4) circle (3pt);
        \draw (0.5,4.460490372862634) node {8};
        \draw [fill=black] (-4,6) circle (3pt);
        \draw (-3.9630587347218746,6.4510755792810315) node {11};
    \end{scriptsize}
\end{tikzpicture}
\caption{The Hasse diagrams of $P$ and $P(\mathcal{H},11,2)$. Observe that $P$ is a rank uniform poset of rank $5$, and $P(\mathcal{H},11,2)$ is a graded poset (not rank uniform) of rank $3$.}
\label{fig:gen-quasi-rank-uniform}
\end{figure}

\begin{theorem}
\label{main-theorem}
    If $P$ is a $\mathrm{TN}$-poset with a least element $\zero$,  and $(\mathcal{H}, y, d)$ is as in Definition \ref{def-construction}, then the chain polynomial of  $P(\mathcal{H}, y, d)$ is real-rooted and its zeros lie in the interval $[-1,0]$.
\end{theorem}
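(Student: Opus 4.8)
The plan is to decompose the chain polynomial of $L:=P(\mathcal H,y,d)$ into chain polynomials of truncated intervals of $P$, rewrite everything in terms of the chain polynomials of the intervals $[\zero,x]$ of $P$ (which are well‑behaved because $P$ is quasi‑rank uniform), and then run the interlacing calculus of Lemma~\ref{basint} on top of the total nonnegativity of $R(P)$, using \cite[Theorem~6.6]{branden2024totally} for the building blocks. Write $D=\{x\in P:x\le y,\ \rho(x)\le d-1\}$ and, for $h\in\mathcal H$, $D_h=\{x\in P:x\le h,\ \rho(x)\le d-1\}$. Sorting each chain of $L$ according to whether it contains the greatest element $y$ and according to which element of $\mathcal H$ it meets — at most one, since $\mathcal H$ is an antichain, and $D_h$ is exactly what lies below $h$ in $L\setminus\{y\}$ — one first gets
\[
c_L(t)=(1+t)\Bigl(c_D(t)+t\sum_{h\in\mathcal H}c_{D_h}(t)\Bigr).
\]

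Next, using that $P$ is quasi‑rank uniform, $D_h$ is the rank‑$(\le d-1)$ truncation of $[\zero,h]$, and counting chains of $D_h$ by the quasi‑ranks of their members through the matrix $R(P)$ shows that $c_{D_h}(t)$ depends only on $\rho(h)$, that $c_D(t)$ is the same polynomial evaluated at $\rho=n$, and that $c_{D_h}(t)=1+t\sum_{k=0}^{d-1}r_{\rho(h),k}(P)\,q_k(t)$, where $q_k(t)$ is the chain polynomial of $\{z\in P:z<x\}$ for any $x$ with $\rho(x)=k$ (well defined by quasi‑rank uniformity, and equal to $c_{[\zero,x]}(t)/(1+t)$). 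Substituting and collecting gives the explicit formula
\[
c_L(t)=(1+t)\Bigl[(1+|\mathcal H|\,t)+t\sum_{k=0}^{d-1}\bigl(r_{n,k}+t\,s_k\bigr)q_k(t)\Bigr],\qquad s_k:=\sum_{h\in\mathcal H}r_{\rho(h),k}(P),
\]
with $r_{n,k}=r_{n,k}(P)$. Here condition (iv) of Definition~\ref{def-construction} forces $s_k\ge r_{n,k}$ (each rank‑$k$ element below $y$ lies below some $h\in\mathcal H$), while $\mathcal H\subseteq\langle y\rangle$ forces $s_k\le|\mathcal H|\,r_{n,k}$; hence every coefficient $r_{n,k}+t\,s_k$ is real‑rooted with its zero in $[-1,-1/|\mathcal H|]\subseteq[-1,0)$.

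For the analytic input: each $[\zero,x]$, being a lower interval of $P$, is itself a $\mathrm{TN}$‑poset (its matrix $R([\zero,x])$ is a principal submatrix of $R(P)$), so by \cite[Theorem~6.6]{branden2024totally} its chain polynomial — hence each $q_k(t)$ — is $[-1,0]$‑rooted; and since $(q_0,\dots,q_N)^{T}=R(P)\cdot(p_0,\dots,p_N)^{T}$ with $R(P)$ totally nonnegative (so $\mathrm{TP}_2$) and $(p_0,\dots,p_N)$ interlacing, Lemma~\ref{basint}(i) shows $\{q_k\}_{k\ge 0}$ is an interlacing sequence. It then remains to prove that $\Psi(t):=(1+|\mathcal H|t)+t\sum_k(r_{n,k}+ts_k)q_k(t)$ is $[-1,0]$‑rooted. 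Real‑rootedness is routine from Lemma~\ref{basint}(ii)--(iii), since any nonnegative combination of an interlacing sequence is real‑rooted and $f\preceq tf$ for any $[-1,0]$‑rooted $f$. To locate the zeros I would use that $D$ and each $D_h$ have a least element, so their chain polynomials vanish at $t=-1$ (the order complex is a cone); this lets one factor $(1+t)$ out of $\Psi$ once more, and then I would build $\Psi$ up one summand at a time along the interlacing sequence $\{q_k\}$, applying Lemma~\ref{basint}(iv) at each step, where the inequalities $r_{n,k}\le s_k\le|\mathcal H|r_{n,k}$ are precisely what keeps the operation from introducing a zero to the left of $-1$. Multiplying back by $1+t$ then yields $c_L(t)$ with all zeros in $[-1,0]$.

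The hard part will be this last step. Real‑rootedness of $\Psi$ is immediate, but confining its zeros to $[-1,0]$ is not a sign count — the crude estimates leave open a zero just to the left of $-1$ — and making it work genuinely needs both the full total nonnegativity of $R(P)$ (not merely its $\mathrm{TP}_2$ property), so that the longer products of entries of $R(P)$ occurring in $c_L(t)$ remain nonnegative, and the combinatorial bounds on the $s_k$, fed in the correct order into Lemma~\ref{basint}(iv).
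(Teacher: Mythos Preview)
Your decomposition $c_L=(1+t)\bigl(c_D+t\sum_h c_{D_h}\bigr)$ is correct and is exactly the paper's Lemma~\ref{main-rec}. Your explicit formula is also right, and the identity $(q_0,\dots,q_N)^T=R(P)(p_0,\dots,p_N)^T$ holds (indeed $q_k=(1+t)p_k/t$ for $k\ge1$). After simplification your expression becomes
\[
c_L=(1+t)^2\Bigl(\underbrace{\textstyle\sum_{k=0}^{d-1}r_{n,k}\,p_k}_{=:U}\;+\;t\,\underbrace{\textstyle\sum_{k=0}^{d-1}s_k\,p_k}_{=:V}\Bigr),
\]
so the whole problem is whether $U+tV$ is $[-1,0]$-rooted.

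The genuine gap is your claim that real-rootedness is ``routine from Lemma~\ref{basint}(ii)--(iii), since any nonnegative combination of an interlacing sequence is real-rooted.'' That statement is false for sequences of \emph{strictly increasing} degree such as $\{p_k\}$ or $\{q_k\}$: already for the Boolean lattice $p_0+p_2=1+t+2t^2$ has no real zeros. In particular, neither $U$ nor $V$ is real-rooted \emph{for that reason}, and even once you know they are (they are chain polynomials of rank-selected $\mathrm{TN}$-posets), nothing you have written gives a relation between $U$ and $V$ strong enough to make $U+tV$ real-rooted. The estimates $r_{n,k}\le s_k\le|\mathcal H|\,r_{n,k}$ do not force it either: with $p_0=1$, $p_1=t$, taking $U=1+2t$ and $V=2+2t$ satisfies your bounds (with $|\mathcal H|=2$) and $V\preceq U$, yet $U+tV=2t^2+4t+1$ has a zero near $-1.71$. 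What rules out such behaviour is precisely the total nonnegativity of $R(P)$, used in a way you never make explicit.

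What the paper does at this point is prove the key interlacing
\[
c_{\langle h\rangle}\ \preceq\ c_{\tau(Q)}\qquad(\text{equivalently }V_h\preceq U)\quad\text{for every }h\in\mathcal H,
\]
via Lemma~\ref{extra-lemma}: for each $h$ one passes to the rank-selected $\mathrm{TN}$-poset $\tilde Q$ of quasi-rank $d+1$ in which $h$ sits at level $d$, and uses the resolvability of $R(\tilde Q)$ to express both $c_{\langle h\rangle}$ and $c_{\tau(Q)}=c_{\tau(\tilde Q)}$ as nonnegative combinations of the \emph{same-degree} interlacing family $\{\EE(R_{d-1,j})\}_{j}$, whence Lemma~\ref{basint}(i) applies. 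This step --- singling out, for each $h$, the pair of rows $\rho(h)$ and $n$ of $R(P)$ and using their $\mathrm{TP}_2$ property, or equivalently the resolvability array $R_{n,k}$ --- is exactly what is missing from your outline, and it is what turns the ``hard part'' into a two-line interlacing argument rather than the inductive Lemma~\ref{basint}(iv) scheme you sketch. Your stated ingredients (interlacing of $\{q_k\}$ together with the scalar bounds on $s_k$) are strictly weaker and, as the example above shows, insufficient.
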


We start by proving two lemmas that will be helpful for the proof of Theorem~\ref{main-theorem}.

Given a poset $P$ with a least element $\zero$, and with quasi-rank function $\rho$, let $P_I = \{ x \in P \colon \rho(x) \in I \}$ denote the \emph{quasi-rank selected subposet} induced by the set $I$, where $\emptyset\neq I \subseteq \{0, 1, 2, \ldots, \rho(P)\}$. For the case where $\rho(P)$ is finite and $I = \{0, \ldots, \rho(P)-2, \rho(P)\}$, we denote $P_I$ by $\tau(P)$ and call it the \emph{truncation} of $P$.

\begin{lemma}
\label{main-rec}
    Suppose $P$ is a finite poset of quasi-rank $d+1$, that has a least element $\zero$ and a largest element $\hat{1}$.  
Then
    \[
        c_P  = c_{\tau(P)} + t\sum_{\substack{h \in P \\ \rho(h) = d}} c_{\langle h \rangle} .
    \]
\end{lemma}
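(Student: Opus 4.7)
The plan is to partition the chains of $P$ according to whether or not they meet the quasi-rank $d$ level, and compute the contribution of each part to $c_P(t)$.

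First I would record two structural facts. Quasi-rank is strictly increasing along any chain (because any maximal chain realizing $\rho(x)$ can be extended by a cover of $x$), so each chain of $P$ contains \emph{at most one} element of quasi-rank $d$. Moreover, since $P$ has quasi-rank $d+1$ and a largest element $\hat{1}$, the unique element of quasi-rank $d+1$ is $\hat{1}$: otherwise some $x \neq \hat{1}$ with $\rho(x) = d+1$ could be extended by $\hat{1}$, forcing $\rho(\hat{1}) \geq d+2$, contradicting $\rho(P) = d+1$. Therefore, for every $h \in P$ with $\rho(h) = d$, the set $\{ x \in P : x > h \}$ equals $\{\hat{1}\}$.

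Next I would split $c_P(t)$ accordingly. Chains of $P$ that avoid quasi-rank $d$ are exactly the chains of $\tau(P)$, contributing $c_{\tau(P)}(t)$. For a fixed $h$ of quasi-rank $d$, a chain of $P$ containing $h$ decomposes uniquely as $C_1 \cup \{h\} \cup C_2$, where $C_1$ is a (possibly empty) chain of $\langle h \rangle \setminus \{h\}$ and $C_2$ is a (possibly empty) chain contained in $\{\hat{1}\}$. The chain generating function for $C_2$ is $1+t$, so the contribution of $h$ to $c_P(t)$ is
\[
t \cdot (1+t) \cdot c_{\langle h \rangle \setminus \{h\}}(t).
\]

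Finally, since $h$ is the top of $\langle h \rangle$, every chain of $\langle h \rangle$ either avoids $h$ or has $h$ as its maximum; splitting on this dichotomy gives $c_{\langle h \rangle}(t) = (1+t)\, c_{\langle h \rangle \setminus \{h\}}(t)$, so the contribution of $h$ becomes $t \cdot c_{\langle h \rangle}(t)$. Summing over all rank-$d$ elements yields the claimed identity. The only genuinely non-bookkeeping step is verifying that $\hat{1}$ is the unique element of quasi-rank $d+1$, which is the main technical obstacle and is handled by the short chain-extension argument above; everything else is a routine decomposition of chains.
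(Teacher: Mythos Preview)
Your proof is correct and follows essentially the same approach as the paper: both partition chains according to whether they contain an element of quasi-rank $d$, identify the chains avoiding that level with chains of $\tau(P)$, and use $c_{\langle h\rangle}(t)=(1+t)\,c_{\langle h\rangle\setminus\{h\}}(t)$ to package the remaining contribution. The only cosmetic difference is that the paper first strips off $\hat 1$ (recording the factor $(1+t)$) and carries out the decomposition on $Q=P\setminus\{\hat 1\}$, whereas you work directly in $P$ and handle $\hat 1$ through your $C_2$ piece; your explicit verification that $\hat 1$ is the unique element of quasi-rank $d+1$ is exactly what makes this direct argument go through.
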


\begin{proof}
We prove that for any poset $Q$ of quasi-rank $n$, with a least element $\zero$, 
\begin{equation}\label{PQ}
c_Q= c_{Q_I} + t \sum_{\substack{h \in P \\ \rho(h) = n}}c_{[\zero, h)}, 
\end{equation}
where $I=\{0,\ldots, n-1\}$ and $[\zero, h)= \{ x \in Q : x <h\}$. This will prove the lemma since if $Q= P \setminus \{\one\}$, then 
$c_P=(1+t)c_Q$. 

For any chain $C$ of $Q$, either $C$ does not contain any element of quasi-rank $n$ and then $C$ is recorded by $c_{Q_I}$. Otherwise $C$ contains exactly one element $h$ of quasi-rank $n$, and then $C$ is counted by $tc_{[\zero, h)}$. The lemma follows. 
\end{proof}

\begin{lemma}
\label{extra-lemma}
    Let $P$ be a finite $\mathrm{TN}$-poset of quasi-rank $d+1$, and suppose $P$ has a least element $\zero$ and a largest element $\one$. Then $c_{\langle h \rangle}  \preceq c_{\tau(P)} $ for each $h \in P$ such that $\rho(h) = d$.
\end{lemma}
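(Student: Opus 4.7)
The plan is to realize $c_{\langle h\rangle}(t)$ and $c_{\tau(P)}(t)$ as the two coordinates of the image of a single interlacing sequence under a common $\mathrm{TP}_2$ matrix, and then invoke Lemma~\ref{basint}\ref{basint:1}.

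The first step is to derive explicit formulas in terms of the polynomials $p_k(t)$ associated with $R(P)$ via~\eqref{polreceq}. Quasi-rank uniformity of $P$ ensures that for every $y\in P$ with $\rho(y)=k\geq 1$ the polynomial $c_{(\zero,y)}(t)$ depends only on $k$, and equals $p_k(t)/t$ by~\eqref{pnchain}. Iterating over the maximum element of a chain in each interior, with the $k=0$ summand ($r_{*,0}=1=p_0$) accounting for the empty chain, gives
\[
c_{(\zero,h)}(t) \;=\; \sum_{k=0}^{d-1}r_{d,k}\,p_k(t)
\qquad\text{and}\qquad
c_{\tau(P)\setminus\{\zero,\one\}}(t) \;=\; \sum_{k=0}^{d-1}r_{d+1,k}\,p_k(t),
\]
since a top element of rank $k\in\{1,\dots,d-1\}$ contributes $t\cdot c_{(\zero,y)}(t)=p_k(t)$. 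Adjoining the minimum $\zero$ and the unique maximum ($h$ in the first poset, $\one$ in the second), each comparable to everything in the interior, contributes an independent factor of $(1+t)$ each, so
\[
c_{\langle h\rangle}(t) \;=\; (1+t)^{2}\sum_{k=0}^{d-1}r_{d,k}\,p_k(t),
\qquad
c_{\tau(P)}(t) \;=\; (1+t)^{2}\sum_{k=0}^{d-1}r_{d+1,k}\,p_k(t).
\]

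The $2\times d$ matrix $A=(r_{d+j,k})_{j\in\{0,1\},\,0\leq k\leq d-1}$ is a submatrix of the totally nonnegative matrix $R(P)$ and is therefore $\mathrm{TP}_{2}$. By the theory of $\mathrm{TN}$-posets developed in \cite{branden2024totally}, the sequence $\{p_k(t)\}_{k=0}^{d-1}$ is interlacing with all zeros in $[-1,0]$; multiplying each $p_k$ by $(1+t)^{2}$ preserves interlacing, because the added root $-1$ is no greater than the smallest zero of any $p_k$. Hence $\{(1+t)^{2}p_k(t)\}_{k=0}^{d-1}\in\mathcal{P}_{d-1}$, and Lemma~\ref{basint}\ref{basint:1} asserts that its image under $A$ lies in $\mathcal{P}_{1}$. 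The formulas above identify this image as $(c_{\langle h\rangle}(t),c_{\tau(P)}(t))^{T}$, so $c_{\langle h\rangle}(t)\preceq c_{\tau(P)}(t)$, as desired.

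The main obstacle is the careful derivation of the two formulas in the first paragraph, which requires combining the chain-counting decomposition by the top element with the recursion~\eqref{polreceq} and the quasi-rank uniformity of $P$, and isolating the $(1+t)^{2}$ factor coming from $\zero$ and from the unique maximum of each subposet. The remaining arguments are direct applications of the $\mathrm{TP}_{2}$ criterion (Lemma~\ref{basint}\ref{basint:1}) together with the interlacing and $[-1,0]$-root properties of $\{p_k\}$ established in \cite{branden2024totally}.
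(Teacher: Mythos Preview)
Your proof is correct. Your two key identities
\[
c_{\langle h\rangle}(t)=(1+t)^{2}\sum_{k=0}^{d-1}r_{d,k}\,p_k(t),
\qquad
c_{\tau(P)}(t)=(1+t)^{2}\sum_{k=0}^{d-1}r_{d+1,k}\,p_k(t)
\]
follow exactly as you say from the decomposition by the top element of a chain together with \eqref{pnchain}, and since the $2\times d$ block $(r_{d+j,k})$ is a submatrix of the TN matrix $R(P)$, Lemma~\ref{basint}\ref{basint:1} applies. One minor remark: the justification ``because the added root $-1$ is no greater than the smallest zero of any $p_k$'' is unnecessary---multiplying an interlacing sequence by any common real-rooted factor preserves interlacing---but the conclusion is unaffected. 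Your appeal to \cite{branden2024totally} for $p_0\preceq p_1\preceq\cdots$ is legitimate; this is precisely what \cite[Theorem~6.6]{branden2024totally} provides (and is what underlies the interlacing clause of Corollary~\ref{dowling-th}).

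Your route differs genuinely from the paper's. The paper works one level deeper in the resolvability structure: it expands $f_{[\zero,h)}$ and $f_{P_J}$ in the basis $\{R_{d-1,j}\}_j$, applies the subdivision operator $\EE$, invokes the interlacing of $\{\EE(R_{d-1,j})\}_{j=0}^{d-1}$ from \cite[Theorem~3.6]{branden2024totally}, and then hits this sequence with the upper-triangular $\mathrm{TP}_2$ matrix $(a_{ij})=\lambda_{d-1,j}\,[j\geq i]$. You bypass the resolving polynomials and the $\lambda_{n,k}$ entirely, using instead the interlacing of $\{p_k\}_k$ and the TN property of $R(P)$ directly. Your argument is shorter and more transparent; the paper's approach, on the other hand, stays within the $\{\EE(R_{n,k})\}$ framework that drives the rest of \cite{branden2024totally} and does not need the separate interlacing statement $p_k\preceq p_{k+1}$.
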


\begin{proof}
    Since
    \[
        c_{\langle h \rangle}  = (1+t) c_{[\hat{0}, h)}  \quad \mbox{and} \quad c_{\tau(P)}  = (1+t) c_{P_J} ,
    \]
    where $J = \{0, 1, \ldots, d-1 \}$, it suffices to prove $c_{[\hat{0}, h)}  \preceq c_{P_J} $ for each $h \in P$ such that $\rho(h) = d$. In fact, let $\{ R_{n,j}  \}_{0 \leq j \leq n \leq d+1}$ be the polynomials that resolve $R(P) = (r_{n,k})_{n,k=0}^{d+1}$. Then by the definition of resolvable sequences and \eqref{r-pasc2},
    \begin{align*}
        f_{[\hat{0}, h)} &= f_{\langle h \rangle} - t^d = R_d - t^d = \sum_{j=0}^{d-1} \lambda_{d-1,j} R_{d-1,j}
    \end{align*}
    and
    \begin{align*}
        f_{P_J} &= f_P - t^{d+1} - r_{d+1,d}t^d = R_{d+1} - t^{d+1} - r_{d+1,d}t^d \smash{\text{\quad\raisebox{-1.0\baselineskip}{.}}}\\
                   &= \sum_{i=0}^d \lambda_{d,i} (R_{d,i}  - t^d) = \sum_{i=0}^d \lambda_{d,i} \sum_{j=i}^{d-1} \lambda_{d-1,j} R_{d-1,j} 
    \end{align*}
    Let $\EE$ be the subdivision operator associated to $P$. Then
    $$
        c_{[\hat{0}, h)} = (1+t) \cdot \EE (f_{[\hat{0}, h)}) = (1+t) \cdot \sum_{j=0}^{d-1} \lambda_{d-1,j} \EE(R_{d-1,j})
    $$
    and
    $$
        c_{P_J} = (1+t) \cdot \EE (f_{P_J}) = (1+t) \cdot \sum_{i=0}^d \lambda_{d,i} \sum_{j=i}^{d-1} \lambda_{d-1,j} \EE \left( R_{d-1,j}  \right).
    $$
    By Lemma \ref{basint}(ii), what is left to prove is that
    $$
        \sum_{j=0}^{d-1} \lambda_{d-1,j} \EE(R_{d-1,j}) \preceq \sum_{j=i}^{d-1} \lambda_{d-1,j} \EE(R_{d-1,j})
    $$
    for each $i = 0, \ldots, d-1$.
    
    To this end, define the matrix
    $$
        A = (a_{ij})_{i,j = 0}^{d-1} = \begin{cases}
            \lambda_{d-1,j} &\mbox{ if } j \geq i \\
            0 &\mbox{ otherwise}
        \end{cases}\,\quad .
    $$
    Since $A$ is $\mathrm{TP}_2$, Lemma \ref{basint}(i) implies that $A$ preserves interlacing sequences of polynomials. Since $\{ \EE(R_{d-1,i}) \}_{i=0}^{d-1}$ is an interlacing sequence of polynomials by \cite[Theorem 3.6]{branden2024totally}, the sequence
    $$
        \{ A(\EE(R_{d-1,i})) \}_{i=0}^{d-1} = \Biggl\{ \sum_{j=i}^{d-1} \lambda_{d-1,j} \EE(R_{d-1,j}) \Biggr\}_{i=0}^{d-1}
    $$
    is an interlacing sequence of polynomials whose zeros all lie in the interval $[-1,0]$.
\end{proof}

We now have all the ingredients to prove Theorem \ref{main-theorem}:

\begin{proof}[Proof of Theorem \ref{main-theorem}]
    Let $Q = P(\mathcal{H}, y, d)$. Since $Q$ is a finite poset with a greatest element $y$, Lemma \ref{main-rec} implies
    \begin{equation}
    \label{main-lemma}
        c_Q = c_{\tau(Q)} + t \sum_{h \in \mathcal{H}} c_{\langle h \rangle} .
    \end{equation}
    
    We now prove that $c_{\langle h \rangle}  \preceq c_{\tau(Q)}$ for each $h \in \mathcal{H}$. In fact, let $h \in \mathcal{H}$ be such that $d \leq \rho(h) = m < \rho(y)$ in $P$ and set $\tilde{\mathcal{H}} = \{ x \in \langle y \rangle \colon \rho(x) = m \}$, and $\tilde{Q} = P(\tilde{\mathcal{H}}, y, d)$. Since $P$ is a $\mathrm{TN}$-poset, $\tilde{Q}$ is also $\mathrm{TN}$ by \cite[Proposition 5.8]{branden2024totally}. Hence, by Lemma \ref{extra-lemma}, $c_{\langle h \rangle}  \preceq c_{\tau(\Tilde{Q})} = c_{\tau(Q)}$, which completes the proof.
\end{proof}

\begin{corollary}
\label{cor-construction-rank-selection}
    Let $P$ be a $\mathrm{TN}$-poset with a least element $\zero$. If $Q = P(\mathcal{H}, y, d)$ and $S = \{ 0 = s_0 < \cdots < s_k \leq d+1 \}$ is a subset of $\mathbb{N}$, then the chain polynomial of $Q_S$ is real-rooted and all of its zeros lie in the interval $[-1,0]$.
\end{corollary}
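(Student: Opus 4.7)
The plan is to mirror the structure of the proof of Theorem \ref{main-theorem}, but now tracking rank selection throughout. Write $Q = P(\mathcal{H}, y, d)$ and $S = \{0 = s_0 < \cdots < s_k \leq d+1\}$. I would begin with two reductions. If $d+1 \in S$, then $y$ is the unique maximum of $Q_S$, hence $c_{Q_S}(t) = (1+t)\, c_{Q_{S \setminus \{d+1\}}}(t)$; multiplying by $(1+t)$ only adds a zero at $-1$, so we may assume $d+1 \notin S$. If in addition $d \notin S$, then $Q_S$ coincides with the rank-selected subposet $(\langle y \rangle_P)_S$ of the interval $\langle y \rangle_P$ of $P$. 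Because $R(\langle y \rangle_P)$ is a principal submatrix of $R(P)$, the interval $\langle y \rangle_P$ is itself a $\mathrm{TN}$-poset, and $c_{(\langle y \rangle_P)_S}(t)$ is real-rooted with zeros in $[-1,0]$ by \cite[Theorem 6.6]{branden2024totally}.

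The remaining case is $d \in S$ and $d+1 \notin S$; set $S' = S \setminus \{d\}$. Splitting chains of $Q_S$ according to whether they contain an element of the antichain $\mathcal{H}$ yields the rank-selected analog of Lemma \ref{main-rec},
\[
c_{Q_S}(t) = c_{Q_{S'}}(t) + t \sum_{h \in \mathcal{H}} c_{(\langle h \rangle_P)_{S'}}(t),
\]
where on the right I may use $P$-ranks for selection because the ranks below $d$ in $Q$ coincide with those in $P$. By the first paragraph, every summand on the right-hand side is real-rooted with zeros in $[-1,0]$, since $(\langle h \rangle_P)_{S'}$ is a rank-selected subposet of the $\mathrm{TN}$-interval $\langle h \rangle_P$ and $Q_{S'} = (\langle y \rangle_P)_{S'}$.

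The crucial step is the interlacing
\[
c_{(\langle h \rangle_P)_{S'}}(t) \preceq c_{Q_{S'}}(t) \qquad \text{for each } h \in \mathcal{H},
\]
the rank-selected analog of Lemma \ref{extra-lemma}. I would prove it as follows. Fix $h \in \mathcal{H}$, set $m = \rho_P(h) \geq d$, and let $\tilde{\mathcal{H}} = \{x \in \langle y \rangle_P : \rho_P(x) = m\}$; then $\tilde{Q} = P(\tilde{\mathcal{H}}, y, d)$ is a $\mathrm{TN}$-poset by \cite[Proposition 5.8]{branden2024totally}. Using the resolvable family $\{R_{n,k}(t)\}$ associated to $\tilde{Q}$, I would express both $c_{(\langle h \rangle_P)_{S'}}(t)$ and $c_{Q_{S'}}(t) = c_{\tilde{Q}_{S'}}(t)$ as nonnegative combinations of the polynomials $\EE(R_{n,k}(t))$ (where $\EE$ is the subdivision operator of $\tilde{Q}$), with both combinations obtainable from the interlacing sequence of \cite[Theorem 3.6]{branden2024totally} via $\mathrm{TP}_2$ transformations in the manner of Lemma \ref{extra-lemma}. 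Lemma \ref{basint}(i) then realises the two polynomials as adjacent entries of an interlacing sequence whose zeros lie in $[-1,0]$. Applying Lemma \ref{basint}(iii) upgrades this to $\sum_{h \in \mathcal{H}} c_{(\langle h \rangle_P)_{S'}}(t) \preceq c_{Q_{S'}}(t)$.

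Finally, the real-rootedness of $c_{Q_S}(t)$ with zeros in $[-1,0]$ follows from a compatibility argument for the sum of a real-rooted polynomial and $t$ times a polynomial that interlaces it, exactly as in the conclusion of Theorem \ref{main-theorem}. The main obstacle I expect is making the rank-selected adaptation of Lemma \ref{extra-lemma} fully precise: one must identify the correct resolvable sub-structure of $\tilde{Q}$ that simultaneously writes both chain polynomials as positive combinations of the same interlacing family, and verify that the rank selection $S'$ interacts cleanly with the subdivision operator so that the $\mathrm{TP}_2$ matrix argument continues to apply.
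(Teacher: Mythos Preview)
Your case split and the reduction for $d\notin S$ match the paper. Where you diverge is the case $d\in S$: you attempt to rerun the entire proof of Theorem~\ref{main-theorem} in a rank-selected setting, which forces you to formulate and prove a rank-selected analogue of Lemma~\ref{extra-lemma}. You correctly identify this as the ``main obstacle'', and indeed your sketch of it (expressing both chain polynomials in terms of $\EE(R_{n,k})$ for the rank-selected $\tilde Q$ and then invoking a $\mathrm{TP}_2$ argument) is left imprecise; making it rigorous would amount to redoing the machinery of Lemma~\ref{extra-lemma} for a different $\mathrm{TN}$-poset.

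The paper avoids this work by a one-line reduction. When $d\in S$, set $\tilde S=S\cup\{d+1,\ldots,n\}$, where $n=\rho_P(y)$. Then $P_{\tilde S}$ is again a $\mathrm{TN}$-poset (closure of $\mathrm{TN}$ under rank selection), and one checks directly that $Q_S$ coincides with the construction $P_{\tilde S}(\mathcal H,y,k)$ for the appropriate new parameter $k$ (the position of $d$ in $\tilde S$). Theorem~\ref{main-theorem} then applies verbatim to $P_{\tilde S}$, giving the result with no new interlacing lemma required. In effect, the paper absorbs the rank selection into the ambient $\mathrm{TN}$-poset \emph{before} invoking the main theorem, whereas you push the rank selection through the proof of the main theorem. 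Your route can be completed, but it reproves what the reduction gets for free.
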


\begin{proof}
    Let $\langle y \rangle \subseteq P$, and let $n$ be the quasi-rank of $y$ in $P$. If $d \notin S$, then $Q_S$ is a rank-selected subposet of the $\mathrm{TN}$-poset $\langle y \rangle$, which implies that $Q_S$ is $\mathrm{TN}$ by \cite[Proposition 5.9]{branden2024totally}. Hence, the result follows.

    Otherwise, let $\Tilde{S} = S \cup \{ d+1, \ldots, n \}$. Then $P_{\Tilde{S}}$ is $\mathrm{TN}$ and $Q_S = P_{\Tilde{S}}(\mathcal{H}, y, k)$, whose chain polynomial is real-rooted and all of its zeros lie in the interval $[-1,0]$ by Theorem \ref{main-theorem}.
\end{proof}

\subsection{\texorpdfstring{$L$}--paving lattices}
\label{paving-matroids}
A family $\mathcal{H}$ of two or more sets forms a  \emph{$d$-partition} if every set in $\mathcal{H}$ has at least $d$ elements, and every $d$-element subset of $\bigcup \mathcal{H}$ is a subset of exactly one set in $\mathcal{H}$. For example, if $V$ is a finite set and $\mathcal{P}^d_V = \binom{V}{d}$, then $\mathcal{P}^d_{V}$ is a $d$-partition of $V$. Another example is
\[
    \mathcal{T} = \mathcal{T}_1 \cup \mathcal{T}_2,
\]
a $3$-partition of $[8]$ where
\[
    \mathcal{T}_1 = \{ \{1,2,3,4\}, \{1,4,5,6\}, \{2,3,5,6\}, \{1,4,7,8\}, \{2,3,7,8\} \}
\]
and $\mathcal{T}_2 = \{ T \subseteq [8] \colon |T| = 3 \}$.

A geometric lattice $L$ of rank $d+1$ is called a \emph{paving lattice}\footnote{These are the lattices of flats of paving matroids.} if $d \in \{0, 1\}$, or if $d \geq 2$ and $L$ is isomorphic to $\mathbb{B}_n (\mathcal{H}, [n], d)$ for some $d$-partition $\mathcal{H}$ of $[n]$. For example, the poset $\tau^{n-d-1}(\mathbb{B}_n)$ is a paving lattice whose set of co-atoms is $\mathcal{P}^d_{[n]}$, and the Vámos lattice \cite[Example 2.1.25]{oxley2006matroid} is a paving lattice whose set of co-atoms is $\mathcal{T}$. 


It has been conjectured that almost all geometric lattices are paving lattices~\cite[Conjecture 1.6]{mayhew2011asymptotic}, in the sense that
$$
    \lim_{n\to\infty} \left( \frac{\text{number of paving lattices with $n$ atoms}}{\text{number of geometric lattices with $n$ atoms}} \right) = 1.
$$

\begin{theorem}
\label{maincor}
    The chain polynomial of any paving lattice $L$ is real-rooted and all of its zeros lie in the interval $[-1,0]$.
    Moreover, if $S$ is a set of nonnegative integers, then $L_S$ has a $[-1,0]$-rooted chain polynomial.
\end{theorem}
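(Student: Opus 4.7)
The plan is to exhibit any paving lattice $L$ as an instance of the construction in Definition~\ref{def-construction} applied to a Boolean algebra, and then invoke Theorem~\ref{main-theorem} directly. First I would dispose of the two trivial cases $d\in\{0,1\}$: for $d=0$, $L$ is the two-element lattice with $c_L(t)=(1+t)^2$, and for $d=1$ any rank-$2$ geometric lattice with $k$ atoms has chain polynomial $(1+t)^2(1+kt)$, which is clearly $[-1,0]$-rooted. (Alternatively, a rank-$2$ paving lattice is realized as $\mathbb{B}_k(\mathcal{H},[k],1)$ with $\mathcal{H}=\{\{1\},\dots,\{k\}\}$, so it fits the general argument below.)

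For $d\geq 2$, by definition $L\cong \mathbb{B}_n(\mathcal{H},[n],d)$ for some $d$-partition $\mathcal{H}$ of $[n]$. The Boolean algebra $\mathbb{B}_n$ is a $\mathrm{TN}$-poset by \cite[Proposition~5.8]{branden2024totally} (as already used in the proof of Theorem~\ref{theorem-desgin}), so the only work is to check that the triple $(\mathcal{H},[n],d)$ satisfies hypotheses (i)--(iv) of Definition~\ref{def-construction}. I would verify:
(i) $[n]\notin\mathcal{H}$, because $\mathcal{H}$ contains at least two sets and any other $h'\in\mathcal{H}$ would share a $d$-subset with $[n]$, violating uniqueness;
(ii) $\rho(h)=|h|\geq d$ for all $h\in\mathcal{H}$, directly from the definition of a $d$-partition;
(iii) $\mathcal{H}$ is an antichain, since $h_1\subsetneq h_2$ would force any $d$-subset of $h_1$ into both blocks; and
(iv) given $x\subsetneq [n]$ with $|x|\leq d-1$, extending $x$ to a $d$-subset $x'\subseteq [n]$ and letting $h\in\mathcal{H}$ be the unique block containing $x'$ yields $x\subseteq x'\subseteq h$.

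With these verifications in hand, Theorem~\ref{main-theorem} immediately gives that $c_L(t)$ is real-rooted with all zeros in $[-1,0]$, and the statement about rank-selected subposets $L_S$ follows in exactly the same way by applying Corollary~\ref{cor-construction-rank-selection} instead. I do not expect any real obstacle here: the theorem is essentially a corollary of the results of Section~\ref{gen-TN-posets} once one observes that the axioms of a $d$-partition translate directly into the axioms required by Definition~\ref{def-construction}.
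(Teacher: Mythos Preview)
Your proposal is correct and follows essentially the same route as the paper's own proof, which simply reads ``Follows directly from Theorem~\ref{main-theorem} and Corollary~\ref{cor-construction-rank-selection}.'' The paper can be this terse because a paving lattice for $d\geq 2$ is by definition already of the form $\mathbb{B}_n(\mathcal{H},[n],d)$, so the hypotheses of Definition~\ref{def-construction} are built into the setup; you are just making explicit the verifications (i)--(iv) that the paper leaves implicit, and handling the degenerate ranks $d\in\{0,1\}$ separately.
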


\begin{proof}
    Follows directly from Theorem \ref{main-theorem} and Corollary \ref{cor-construction-rank-selection}.
\end{proof}

We now generalize the idea behind the construction of a paving lattice. Let $L$ be a finite geometric lattice with rank function $\rho$. Suppose $d \geq 1$, and  $\mathcal{H} \subseteq L$ is such that
    \begin{enumerate}[label=(\roman*)]
        \item $\hat{1} \notin \mathcal{H}$,
        \item $\rho(h)\geq d$ for all $h \in \mathcal{H}$,
        \item $\mathcal{H}$ is an antichain, and
        \item for each $x \in L$ with $\rho(x)= d$ there exists a unique $h \in \mathcal{H}$ such that $x \leq h$. 
    \end{enumerate} 
The family $\mathcal{H}$ is called a \emph{generalized $d$-partition} of $L$.

\begin{lemma}\label{d-part}
    If $L$ is a geometric lattice and $\mathcal{H}$ is a generalized $d$-partition of $L$, then $L(\mathcal{H},\hat{1},d)$ is a geometric lattice.
\end{lemma}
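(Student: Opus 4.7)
The plan is to verify the three defining properties of a geometric lattice for $Q := L(\mathcal{H},\hat{1},d)$: that it is a lattice, that it is semimodular, and that it is atomistic. Note first that a generalized $d$-partition of $L$ satisfies the hypotheses of Definition~\ref{def-construction} with $P = L$ and $y = \hat{1}$: given $x \in L$ with $\rho_L(x) \leq d-1$, one extends $x$ upward in $L$ to a rank-$d$ element $x'$, which is below some $h \in \mathcal{H}$ by condition (iv), whence $x \leq h$. Throughout I use the candidate rank function $\rho_Q(x) = \rho_L(x)$ for $\rho_L(x) \leq d-1$, $\rho_Q(h) = d$ for $h \in \mathcal{H}$, and $\rho_Q(\hat{1}) = d+1$; this is the actual rank function because a covering relation $x \lessdot_Q h$ with $h \in \mathcal{H}$ forces $\rho_L(x) = d-1$ (otherwise a rank-$(d-1)$ element lying strictly between $x$ and $h$ in $L$ would also lie in $Q$).

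Since $Q$ is finite with a top, it suffices to exhibit meets to obtain a lattice. For $u,v$ both of $L$-rank $\leq d-1$, the element $u \wedge_L v$ has $L$-rank $\leq d-1$ and serves; for $u$ of $L$-rank $\leq d-1$ and $h \in \mathcal{H}$, the element $u \wedge_L h$ has rank at most $\rho_L(u) \leq d-1$. The crucial case is two distinct $h_1, h_2 \in \mathcal{H}$: I claim $\rho_L(h_1 \wedge_L h_2) \leq d-1$. Otherwise $L$ would contain a rank-$d$ element $x \leq h_1 \wedge_L h_2$, so $x \leq h_1$ and $x \leq h_2$, contradicting uniqueness in condition (iv). Thus $h_1 \wedge_Q h_2 = h_1 \wedge_L h_2$. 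The join formula follows from the same uniqueness argument: $u \vee_Q v$ equals $u \vee_L v$ when $\rho_L(u \vee_L v) \leq d-1$, equals the unique $h \in \mathcal{H}$ containing $u \vee_L v$ when such an $h$ exists, and equals $\hat{1}$ otherwise.

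Semimodularity follows from a case analysis. When $u,v$ both have $L$-rank $\leq d-1$, one has $\rho_Q(u \vee_Q v) \leq \rho_L(u \vee_L v)$ in every subcase (equality when the join stays of low rank; $\rho_Q = d \leq \rho_L(u \vee_L v) = d$ when the join lifts to $\mathcal{H}$; and $\rho_Q = d+1$ only when $\rho_L(u \vee_L v) \geq d+1$), while $\rho_Q(u \wedge_Q v) = \rho_L(u \wedge_L v)$, so the semimodular inequality for $L$ transfers. For $u$ of $L$-rank $\leq d-1$ with $u \not\leq h \in \mathcal{H}$, we get $u \vee_Q h = \hat{1}$ (the antichain property prevents any $h' \in \mathcal{H}$ strictly above $h$) and $\rho_L(u \wedge_L h) \leq \rho_L(u) - 1$, yielding $\rho_Q(u) + d \geq \rho_L(u \wedge_L h) + d + 1$. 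For distinct $h_1, h_2 \in \mathcal{H}$, the antichain property gives $h_1 \vee_Q h_2 = \hat{1}$, and the bound $\rho_L(h_1 \wedge_L h_2) \leq d-1$ yields $d + d \geq (d-1) + (d+1)$.

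For the atomistic property: when $d \geq 2$ the atoms of $Q$ coincide with the atoms of $L$; elements of $L$-rank $\leq d-1$ are joins of atoms in $L$, and these joins are preserved in $Q$ since the result stays of rank $\leq d-1$. For $h \in \mathcal{H}$, let $A_h$ be the atoms of $L$ below $h$; since $L$ is atomistic, $\bigvee_L A_h = h$, and any upper bound of $A_h$ in $Q$ lies in $\mathcal{H} \cup \{\hat{1}\}$ (no rank-$\leq d-1$ element of $L$ is above all of $A_h$) and must dominate $h$ in $L$, so the antichain property forces it to equal $h$. The element $\hat{1}$ is the join in $Q$ of all atoms of $L$, for otherwise some $h \in \mathcal{H}$ would lie above every atom of $L$, giving $h \geq_L \hat{1}_L$ and contradicting $\hat{1} \notin \mathcal{H}$. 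The case $d=1$ is easier: the elements of $\mathcal{H}$ are themselves the atoms of $Q$, and $\hat{1}$ is their join. The main obstacle is the semimodularity case analysis and the consistent bookkeeping of ranks across $L$ and $Q$; the repeatedly essential ingredients are the uniqueness clause (iv) and the antichain property (iii) of a generalized $d$-partition.
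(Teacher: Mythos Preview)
Your proof is correct and follows essentially the same route as the paper's: verify that $Q$ is a finite meet-semilattice with top (hence a lattice), then check semimodularity and atomicity by the same case analysis on whether the elements involved lie in $\mathcal{H}$ or have $L$-rank at most $d-1$. If anything, your write-up is more careful than the paper's in two places: you explicitly establish the key bound $\rho_L(h_1 \wedge_L h_2) \leq d-1$ for distinct $h_1,h_2 \in \mathcal{H}$ (via the uniqueness clause of condition~(iv)), whereas the paper simply asserts $\rho'(x \wedge' y) \leq d-1$ in that case; and your argument that $h \in \mathcal{H}$ is the $Q$-join of the atoms below it is cleaner than the paper's somewhat telegraphic version.
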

    
\begin{proof}
       The claim is obvious for $d=1$. Assume $d>1$. Let $\rho'$ be the rank function of $L(\mathcal{H},\hat{1},d)$, and let $\vee$ and $\vee'$ (respectively, $\wedge$ and $\wedge'$) be the joins (respectively, the meets) in $L$ and $L(\mathcal{H},\hat{1},d)$, respectively. We will prove that $L(\mathcal{H},\hat{1},d)$ is $(1)$ a lattice, $(2)$ atomistic and $(3)$ semimodular:
       \begin{enumerate}[leftmargin=*]
           \item Since $L(\mathcal{H},\hat{1},d)$ is a finite meet semi-lattice with a largest element, then, by \cite[Proposition 3.3.1]{stanley2011enumerative}, it is a lattice;
           \item Let $x \in L(\mathcal{H},\hat{1},d)$. Observe that $a$ is an atom of $L(\mathcal{H},\hat{1},d)$ if and only if it is an atom of $L$. There are three different cases to deal with:
           \begin{itemize}
               \item $\rho'(x) \leq d-1$. By definition of $L(\mathcal{H},\hat{1},d)$, if $x = \bigvee_i a_i$ for atoms $a_i \in L$, then $x = \bigvee_i' a_i$ in $L(\mathcal{H},\hat{1},d)$;
               \item $\rho'(x) = d$. Let $h_1, \ldots, h_k$ be the elements of $L$ with $\rho(h_i) = d$, $i = 1, \ldots, k$, such that $h_1, \ldots, h_k \leq x$ in $L$. Then $x = \bigvee_{i \in [k]} h_i = \bigvee_j a_j$ for some atoms $a_j$ of $L$. Therefore, $x = \bigvee_j' a_j$;
               \item $\rho'(x) = d+1$. Hence, $x = \hat{1}$, which is the join of different elements in $\mathcal{H}$ and hence a join of atoms in $L(\mathcal{H},\hat{1},d)$, by the above.
           \end{itemize}
           \item Let $x,y \in L(\mathcal{H})$. We want to prove
           \begin{equation}
           \label{smod}
                \rho'(x) + \rho'(y) \geq \rho' (x \vee ' y) + \rho' (x \wedge ' y).
           \end{equation}
           There are three different cases to deal with:
           \begin{itemize}[leftmargin=*]
                \item $x$ and $y$ are in $\mathcal{H}$. Then $\rho' (x) = \rho' (x) = d$, $\rho' (x \vee ' y) = d + 1$, and $\rho' (x \wedge ' y) \leq d-1$, which implies \eqref{smod};
                \item $x$ is not in $\mathcal{H}$ and $y$ is in $\mathcal{H}$. If $x \leq' y$, then there is nothing to prove. Otherwise, $\rho' (x \wedge ' y) \leq \rho' (x) - 1$, and $x \vee' y = \hat{1}$, from which \eqref{smod} holds;
                \item $x$ and $y$ are not in $\mathcal{H}$. We may assume $x$ and $y$ are smaller than $\hat{1}$. Then 
                  \begin{align*}
                    \rho'(x)+\rho'(y) &= \rho(x) + \rho(y) \geq  \rho (x \vee y) + \rho (x \wedge y) \\
                     &=  \rho (x \vee y) + \rho'(x \wedge' y). 
                \end{align*}
            Hence it remains to prove 
            \begin{equation}\label{fpf}
            \rho'(x \vee' y) \leq \rho (x \vee y). 
            \end{equation}
            Again, there are three different cases to deal with:
            \begin{itemize}
                \item $\rho (x \vee y) \leq d-1$. Then $x \vee y=x \vee' y$, and so \eqref{fpf} holds;
                \item $\rho (x \vee y) = d$. Then there exists $h \in \mathcal{H}$ such that $x \vee y \leq h$, and hence $\rho'(x \vee' y) \leq \rho'(h) = d$, which verifies \eqref{fpf};
                \item $\rho (x \vee y) \geq d+1$. Then $x \vee' y = \hat{1}$, which implies $\rho'(x \vee' y)=d+1$, and verifies \eqref{fpf}. 
            \end{itemize}
        \end{itemize}
    \end{enumerate}
\end{proof}
    
The geometric lattice $L(\mathcal{H},\hat{1},d)$ is referred to as an $L$\emph{-paving lattice}. For instance, when $L$ corresponds to a Boolean algebra, $L(\mathcal{H},\hat{1},d)$ is a paving lattice.

\begin{remark}
    Any geometric lattice $L$ is an $L$-paving lattice. In fact, it suffices taking $\mathcal{H}$ as the set of co-atoms of $L$ and $d = \rho(L)-1$.
\end{remark}

\begin{theorem}
\label{rank-uniform-matroids}
    Let $L$ be a rank uniform geometric lattice. Then the chain polynomial of any $L$-paving lattice $Q$ is real-rooted, and all of its zeros lie in the interval $[-1,0]$. Moreover if $S$ is a set of nonnegative integers, then $Q_S$ has a $[-1,0]$-rooted chain polynomial.
\end{theorem}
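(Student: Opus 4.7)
The plan is to reduce the statement to Theorem \ref{main-theorem} and Corollary \ref{cor-construction-rank-selection}, both of which take a TN-poset $P$ as input and conclude real-rootedness of $c_{P(\mathcal{H},y,d)}(t)$ (and its rank selections) with zeros in $[-1,0]$. So the task splits cleanly into two parts: (a) show the rank uniform geometric lattice $L$ is itself a TN-poset, and (b) verify that an $L$-paving lattice $Q=L(\mathcal{H},\hat{1},d)$ fits the hypotheses of Definition \ref{def-construction} with ambient poset $L$, distinguished element $y=\hat{1}$, and given parameter $d$.

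For (a) I would simply combine two results already in the paper. By Proposition \ref{alleq}, a rank uniform geometric lattice is triangular and (being geometric) semimodular. Then Theorem \ref{rankunigeo} immediately gives that $L$ is a $\mathrm{TN}$-poset. This step is essentially free.

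For (b) I would check the four conditions of Definition \ref{def-construction} against the four conditions defining a generalized $d$-partition (as stated just before Lemma \ref{d-part}). Conditions (i), (ii), (iii) transfer verbatim. The only non-trivial matching is condition (iv): Definition \ref{def-construction} requires that every $x < \hat{1}$ with $\rho(x)\leq d-1$ lie below some $h\in\mathcal{H}$, whereas the generalized $d$-partition only guarantees this for $x$ of rank exactly $d$. The bridge is straightforward and is where the geometric hypothesis on $L$ is used: given $x$ with $\rho(x)\leq d-1$, since $L$ is atomistic and $\rho(x)<d<\rho(L)$, there exist atoms not below $x$; joining them one at a time to $x$ raises rank by exactly $1$ at each step by semimodularity, producing an element $x'\in L$ with $\rho(x')=d$. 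Applying condition (iv) of the generalized $d$-partition to $x'$ yields some $h\in\mathcal{H}$ with $x\leq x'\leq h$, as required.

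With (a) and (b) in hand, Theorem \ref{main-theorem} directly gives that $c_Q(t)$ is real-rooted with zeros in $[-1,0]$, and Corollary \ref{cor-construction-rank-selection} gives the same conclusion for $c_{Q_S}(t)$ for any $S\subseteq\mathbb{N}$. I do not anticipate any serious obstacle; the entire argument is essentially bookkeeping, and the only piece that requires a thought is the rank-extension argument in (b), which hinges on the standard fact that in a geometric lattice one can always grow a non-top element by atoms one rank at a time.
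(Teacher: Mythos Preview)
Your proposal is correct and follows essentially the same approach as the paper: the paper's proof simply cites Theorem~\ref{rankunigeo} to obtain that $L$ is a $\mathrm{TN}$-poset and then invokes Theorem~\ref{main-theorem} and Corollary~\ref{cor-construction-rank-selection}. Your explicit verification that condition~(iv) of Definition~\ref{def-construction} follows from condition~(iv) of a generalized $d$-partition (via extending a low-rank element up to rank $d$ using atoms in a geometric lattice) is a detail the paper leaves to the reader, but it is sound and introduces no new ideas beyond the paper's argument.
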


\begin{proof}
    The first part of the proof follows from Theorem \ref{main-theorem} and from the fact that $L$ is a $\mathrm{TN}$-poset (Theorem \ref{rankunigeo}). The second part follows from Corollary \ref{cor-construction-rank-selection}.
\end{proof}

\begin{example}
    By Theorem~\ref{rank-uniform-matroids}, the chain polynomials of $\mathbb{B}_n(q)$-paving lattices and $\mathbb{A}_n(q)$-paving lattices are real-rooted.
\end{example}

\section{Single-element extensions}
\label{SEE}

In this section, we define single-element extension of geometric lattices and prove that any principal single-element extension of $\mathbb{B}(E)$ and $\tau(\mathbb{B}(E))$ has real-rooted chain polynomial. For easy of exposition, if $X$ is a set and $e \notin X$, we denote $X \cup \{e\}$ by $X \cup e$.

Let $L$ be a geometric lattice with rank function $\rho$. A \emph{modular cut} $\mathcal{M}$ of $L$ is a collection of elements of $L$ that satisfies the following:
\begin{enumerate}[label=(\roman*)]
    \item if $x \in \mathcal{M}$, and $y \in L$ is such that $x \leq y$, then $y \in \mathcal{M}$;
    \item if $x, y \in \mathcal{M}$ is a \emph{modular pair} of $L$, that is, $\rho(x) + \rho(y) = \rho(x \wedge y) + \rho(x \vee y)$, then $x \wedge y\in \mathcal{M}$.
\end{enumerate}

For $x_1, \ldots, x_k \in L$, let
$
    \{x_1, \ldots, x_k\}^+ \coloneqq \{ x \in L \colon x \geq x_i \mbox{ for some } i = 1, \ldots, k \}.
$
By definition, a modular cut $\mathcal{M} \neq \emptyset$ is determined by its minimal elements, hence $\mathcal{M} = \{x_1, \ldots, x_k\}^+$ for some $x_1, \ldots, x_k \in L$. When $k=1$, $\mathcal{M}$ is called a \emph{principal modular cut} of $L$, and we write $x_1^+$ instead of $\{x_1\}^+$.

Let $A$ be the set of atoms of $L$. We may identify each $\hat{0} \neq x \in L$ with the set $X = \{ a \in A \colon a\leq x \}$. Given a modular cut $\mathcal{M}$ of $L$ and $e \notin A$, the \emph{single-element extension} $L +_\mathcal{M} e$ is defined as the geometric lattice whose elements fall into the three following disjoint classes \cite[Corollary 7.2]{oxley2006matroid}:

\begin{enumerate}[label=(\roman*)]
    \item elements $X$ of $L$ that are not in $\mathcal{M}$;
    \item elements $X \cup e$, where $X \in \mathcal{M}$;
    \item elements $X \cup e$, where $X \in L$ is not in $\mathcal{M}$ and there is no $Y \in \mathcal{M}$ such that $\rho(Y) = \rho(X) + 1$ and $X \subseteq Y$.
\end{enumerate}
When $\mathcal{M} = X^+$ is a principal modular cut of $L$, we write $L+_X e$. For example, consider the Boolean algebra $\mathbb{B}_3$. Then, $\mathcal{M}_1 = \{1\}^+$ and $\mathcal{M}_2 = \{1,2\}^+$ are modular cuts. The Hasse diagrams of of $\mathbb{B}_3$, $\mathbb{B}_3 +_{\{1\}} 4$ and $\mathbb{B}_3 +_{\{1,2\}} 4$ are given in Figure \ref{fig:ex-sing-el}.

\begin{figure}[H]
    \centering
    \begin{tikzpicture}[scale=0.52,every node/.style={scale=0.65}, line cap=round,line join=round,>=triangle 45,x=1cm,y=1cm]]
        \clip(-4,-4) rectangle (20,4);
        \draw [line width=0.5pt] (-2,-1)-- (0,-3);
        \draw [line width=0.5pt] (0,-1)-- (0,-3);
        \draw [line width=0.5pt] (2,-1)-- (0,-3);
        \draw [line width=0.5pt] (0,1)-- (-2,-1);
        \draw [line width=0.5pt] (0,1)-- (2,-1);
        \draw [line width=0.5pt] (-2,1)-- (-2,-1);
        \draw [line width=0.5pt] (-2,1)-- (0,-1);
        \draw [line width=0.5pt] (2,1)-- (2,-1);
        \draw [line width=0.5pt] (2,1)-- (0,-1);
        \draw [line width=0.5pt] (0,3)-- (0,1);
        \draw [line width=0.5pt] (0,3)-- (-2,1);
        \draw [line width=0.5pt] (0,3)-- (2,1);
        \draw [line width=0.5pt] (5,-1)-- (7,1);
        \draw [line width=0.5pt] (7,1)-- (9,-1);
        \draw [line width=0.5pt] (9,-1)-- (7,-3);
        \draw [line width=0.5pt] (7,-3)-- (7,-1);
        \draw [line width=0.5pt] (7,-1)-- (5,1);
        \draw [line width=0.5pt] (5,1)-- (5,-1);
        \draw [line width=0.5pt] (5,-1)-- (7,-3);
        \draw [line width=0.5pt] (7,-1)-- (9,1);
        \draw [line width=0.5pt] (9,1)-- (9,-1);
        \draw [line width=0.5pt] (9,1)-- (7,3);
        \draw [line width=0.5pt] (7,3)-- (7,1);
        \draw [line width=0.5pt] (5,1)-- (7,3);
        \draw [line width=0.5pt] (12,1)-- (15,3);
        \draw [line width=0.5pt] (15,3)-- (14,1);
        \draw [line width=0.5pt] (16,1)-- (15,3);
        \draw [line width=0.5pt] (15,3)-- (18,1);
        \draw [line width=0.5pt] (12,-1)-- (15,-3);
        \draw [line width=0.5pt] (15,-3)-- (14,-1);
        \draw [line width=0.5pt] (16,-1)-- (15,-3);
        \draw [line width=0.5pt] (15,-3)-- (18,-1);
        \draw [line width=0.5pt] (12,-1)-- (12,1);
        \draw [line width=0.5pt] (12,1)-- (14,-1);
        \draw [line width=0.5pt] (12,-1)-- (14,1);
        \draw [line width=0.5pt] (14,1)-- (16,-1);
        \draw [line width=0.5pt] (14,-1)-- (16,1);
        \draw [line width=0.5pt] (16,1)-- (16,-1);
        \draw [line width=0.5pt] (16,-1)-- (18,1);
        \draw [line width=0.5pt] (18,1)-- (18,-1);
        \draw [line width=0.5pt] (18,-1)-- (12,1);
        \draw (-0.2,-3.3) node[anchor=north west] {$\emptyset$};
        \draw (6.909130690063523,-3.1175894455267446) node[anchor=north west] {$\emptyset$};
        \draw (14.898949859786027,-3.109134610426509) node[anchor=north west] {$\emptyset$};
        \draw (-2.9,-1.1053386916706984) node[anchor=north west] {$\{ 1 \}$};
        \draw (-0.1,-1.1053386916706984) node[anchor=north west] {$\{ 2 \}$};
        \draw (1.8277747948220149,-1.1053386916706984) node[anchor=north west] {$\{ 3 \}$};
        \draw (-3.6,1.0929184343905283) node[anchor=north west] {$\{ 1,2 \}$};
        \draw (-0.8,0.5) node[anchor=north west] {$\{ 1,3 \}$};
        \draw (2.0306908372276657,1.1098281045909992) node[anchor=north west] {$\{ 2,3 \}$};
        \draw (-1.0, 4.0) node[anchor=north west] {$\{ 1,2,3 \}$};
        \draw (3.5,-0.9108774843652822) node[anchor=north west] {$\{ 1,4 \}$};
        \draw (9.031294300222621,-0.8939678141648112) node[anchor=north west] {$\{ 3 \}$};
        \draw (7,-0.8939678141648112) node[anchor=north west] {$\{ 2 \}$};
        \draw (6.1,0.35) node[anchor=north west] {$\{ 1,3,4 \}$};
        \draw (3.2,2) node[anchor=north west] {$\{ 1,2,4 \}$};
        \draw (9.031294300222621,2) node[anchor=north west] {$\{ 2,3 \}$};
        \draw (5.7, 4.0) node[anchor=north west] {$\{ 1,2,3,4 \}$};
        \draw (10.9,-0.9108774843652822) node[anchor=north west] {$\{ 1 \}$};
        \draw (12.9,-0.9108774843652822) node[anchor=north west] {$\{ 2 \}$};
        \draw (16.02344292811734,-0.8939678141648112) node[anchor=north west] {$\{ 3 \}$};
        \draw (18.027238846873143,-0.8939678141648112) node[anchor=north west] {$\{ 4 \}$};
        \draw (10.2,1.4902956841015962) node[anchor=north west] {$\{ 1,2,4 \}$};
        \draw (12.5,1.4987505192018318) node[anchor=north west] {$\{ 1,3 \}$};
        \draw (16.048807433418048,1.4649311788008899) node[anchor=north west] {$\{ 2,3 \}$};
        \draw (18.044148517073616,1.4902956841015962) node[anchor=north west] {$\{ 3,4 \}$};
        \draw (13.7,4) node[anchor=north west] {$\{ 1,2,3,4 \}$};
        \begin{scriptsize}
            \draw [fill=black] (0,-3) circle (2.5pt);
            \draw [fill=black] (0,-1) circle (2.5pt);
            \draw [fill=black] (2,-1) circle (2.5pt);
            \draw [fill=black] (-2,-1) circle (2.5pt);
            \draw [fill=black] (0,1) circle (2.5pt);
            \draw [fill=black] (2,1) circle (2.5pt);
            \draw [fill=black] (-2,1) circle (2.5pt);
            \draw [fill=black] (0,3) circle (2.5pt);
            \draw [fill=black] (5,-1) circle (2.5pt);
            \draw [fill=black] (7,-1) circle (2.5pt);
            \draw [fill=black] (9,-1) circle (2.5pt);
            \draw [fill=black] (7,-3) circle (2.5pt);
            \draw [fill=black] (5,1) circle (2.5pt);
            \draw [fill=black] (7,1) circle (2.5pt);
            \draw [fill=black] (9,1) circle (2.5pt);
            \draw [fill=black] (7,3) circle (2.5pt);
            \draw [fill=black] (12,-1) circle (2.5pt);
            \draw [fill=black] (14,-1) circle (2.5pt);
            \draw [fill=black] (16,-1) circle (2.5pt);
            \draw [fill=black] (18,-1) circle (2.5pt);
            \draw [fill=black] (15,-3) circle (2.5pt);
            \draw [fill=black] (12,1) circle (2.5pt);
            \draw [fill=black] (14,1) circle (2.5pt);
            \draw [fill=black] (16,1) circle (2.5pt);
            \draw [fill=black] (18,1) circle (2.5pt);
            \draw [fill=black] (15,3) circle (2.5pt);
        \end{scriptsize}
    \end{tikzpicture}
    \caption{The Hasse diagrams of $\mathbb{B}_3$, $\mathbb{B}_3 +_{\{1\}} 4$ and $\mathbb{B}_3 +_{\{1,2\}} 4$, respectively.}
    \label{fig:ex-sing-el}
\end{figure}
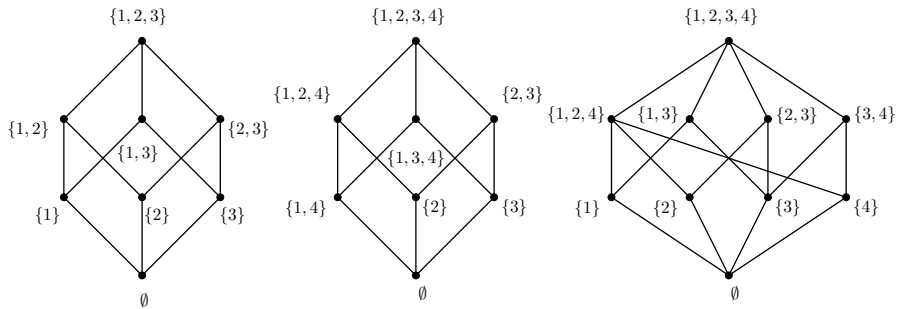

Any geometric lattice may be constructed from a Boolean lattice by repeatedly single-element extensions. This technique was used by Blackburn, Crapo, and Higgs \cite{catalog} to find all geometric lattices with at most $8$ atoms.  It is not hard to prove that Conjecture~\ref{conj1} is true for geometric lattices of rank at most three. We provide a proof here for reference. 

\begin{theorem}
\label{thr5.1}
    Conjecture~\ref{conj1} is true for all geometric lattices of ranks $1$, $2$ or $3$.
\end{theorem}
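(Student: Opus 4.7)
The plan is to handle the three ranks separately. Rank $1$ is trivial: the only geometric lattice is $\{\hat 0,\hat 1\}$, so $c_L(t) = 1+2t+t^2 = (1+t)^2$. In rank $2$, the lattice is determined by its number $n \ge 2$ of atoms, and enumerating $k$-chains directly yields
\[
c_L(t) = 1 + (n+2)t + (2n+1)t^2 + nt^3 = (1+t)^2(1+nt),
\]
whose zeros $-1,-1,-1/n$ lie in $[-1,0]$.

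For rank $3$, I would appeal to Theorem~\ref{maincor}. Geometric lattices are exactly the lattices of flats of simple matroids, and a simple matroid contains no circuit of size $1$ (no loops) or $2$ (no parallel elements). Hence every circuit in a rank-$3$ simple matroid has cardinality at least $3 = \rho(L)$, which is the defining condition of a paving matroid. Thus every rank-$3$ geometric lattice is a paving lattice, and Theorem~\ref{maincor} delivers real-rootedness with zeros in $[-1,0]$.

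A self-contained alternative proceeds by direct calculation. Let $L$ have $n$ atoms and let $\ell_1,\dots,\ell_m$ be its rank-$2$ flats, with $p_i$ denoting the number of atoms under $\ell_i$ (so $p_i\ge 2$ and $p_i\le n$). A routine chain count, combined with twice factoring out $(1+t)$ (both factorizations succeed because $c_L(-1)=0$ and the quotient also vanishes at $-1$), gives
\[
c_L(t) = (1+t)^2\Bigl(1+(n+m)t + \bigl(\textstyle\sum_i p_i\bigr)t^2\Bigr).
\]
The quadratic factor is real-rooted precisely when $(n+m)^2 \ge 4\sum_i p_i$, which follows from $\sum_i p_i \le mn$ (using $p_i\le n$) together with the AM--GM inequality $4mn \le (n+m)^2$. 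All zeros then lie in $[-1,0]$.

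The main obstacle is essentially absent: paving-ness is automatic at rank $3$, so either route finishes quickly. The genuine difficulty of Conjecture~\ref{conj1} begins at rank $\ge 4$, which is precisely what the earlier sections of the paper tackle in various families.
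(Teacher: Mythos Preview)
Your proposal is correct. The self-contained rank-$3$ calculation you give is essentially the paper's own proof: the paper writes $c_L(t) = (1+t)^2\bigl(1 + (m_1+m_2)t + et^2\bigr)$, where $m_1,m_2$ are the numbers of rank-$1$ and rank-$2$ elements and $e$ is the number of cover relations between them (your $\sum_i p_i$), and then observes that the discriminant equals $(m_1-m_2)^2 + 4(m_1m_2 - e) \ge 0$ because $e \le m_1m_2$. That is exactly your inequality $\sum_i p_i \le mn$ combined with $(n+m)^2 \ge 4mn$.

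Your first rank-$3$ route, however, is one the paper does not take here: noting that a simple matroid has no circuits of size $<3$, so every rank-$3$ geometric lattice is automatically paving, and then invoking Theorem~\ref{maincor}. This is slicker and legitimately available, since Theorem~\ref{maincor} is proved earlier in Section~\ref{gen-TN-posets}; it also immediately yields the $[-1,0]$ localization of the zeros, which the paper's direct discriminant argument does not address (and which your direct argument asserts but does not justify---one would need something like $\sum_i p_i \ge n+m-1$, e.g.\ via de~Bruijn--Erd\H{o}s). The paper presumably chose the elementary discriminant computation to keep Theorem~\ref{thr5.1} independent of the $\mathrm{TN}$-poset machinery, at the cost of this small shortcut.
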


\begin{proof}
The chain polynomials of the geometric lattices of ranks $1$ and $2$ are given by $c_1 = (1+t)^2$ and $c_2 = (1+mt)(1+t)^2$ (where $m$ is the number of co-atoms of the geometric lattice), respectively, which are clearly real-rooted. 

The chain polynomial of any geometric lattice $L$ of rank $3$ is given by $c_L = [1 + (m_1 + m_2)t + et^2](1+t)^2$, where $m_i$ is the number of rank-$i$ elements and $e$ is the number of edges between rank-$1$ elements and rank-$2$ elements in the Hasse diagram of $L$. Observe that $e \leq m_1 m_2$. Since the discriminant of $p = 1 + (m_1 + m_2)t + et^2$ is given by $(m_1 - m_2)^2 + 4 (m_1 m_2 - e) \geq 0$, $c_L$ is real-rooted.
\end{proof}

For example, the Fano \cite[Example 1.5.7]{oxley2006matroid} and the Pappus \cite[Example 1.5.15]{oxley2006matroid} lattices are rank-$3$ geometric lattices. Hence, by Theorem \ref{thr5.1}, their chain polynomials are real-rooted.

Now, consider the Boolean algebras $\mathbb{B}(E)$, $|E| \geq 4$. Our goal is to study the chain polynomial of a single-element extension of $\mathbb{B}(E)$, so we must understand how to describe its modular cuts. The following proposition is straightforward:

\begin{proposition}
\label{modular-cuts-Boolean}
    Each non-empty modular cut $\mathcal{M}$ of $\mathbb{B}(E)$ is principal.
\end{proposition}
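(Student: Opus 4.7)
The plan is to exploit the fact that the Boolean lattice $\mathbb{B}(E)$ is a modular lattice, so every pair of elements is a modular pair. This will reduce condition (ii) in the definition of a modular cut to plain closure under intersection, and a principal modular cut will then be obtained by taking the intersection of all elements of $\mathcal{M}$.

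First I would check that if $X, Y \in \mathbb{B}(E)$, then $(X, Y)$ is a modular pair. Since the rank function on $\mathbb{B}(E)$ is cardinality, this is just the elementary inclusion-exclusion identity
\[
    |X| + |Y| = |X \cap Y| + |X \cup Y|,
\]
so $\rho(X) + \rho(Y) = \rho(X \wedge Y) + \rho(X \vee Y)$ holds for every $X, Y$. Consequently, condition (ii) in the definition of a modular cut becomes: if $X, Y \in \mathcal{M}$, then $X \cap Y \in \mathcal{M}$.

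Given a non-empty modular cut $\mathcal{M}$, since $\mathbb{B}(E)$ is finite I can iterate this closure property and form
\[
    X_0 \coloneqq \bigcap_{X \in \mathcal{M}} X,
\]
and $X_0 \in \mathcal{M}$. By condition (i), every superset of $X_0$ lies in $\mathcal{M}$, so $X_0^+ \subseteq \mathcal{M}$. Conversely, by the definition of $X_0$, every $X \in \mathcal{M}$ satisfies $X \supseteq X_0$, hence $X \in X_0^+$. Therefore $\mathcal{M} = X_0^+$ is principal.

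There is no real obstacle here: the proof is essentially the observation that Boolean lattices are modular, so the meet-closure axiom is unconditional, and finite meet-closed upper sets in any finite lattice are principal filters.
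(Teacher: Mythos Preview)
Your proof is correct and follows exactly the approach the paper intends: the paper's proof is the single line ``Follows from the modularity of $\mathbb{B}(E)$,'' and you have simply spelled out what that means---every pair is modular, so the cut is meet-closed, hence a principal filter in the finite lattice.
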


\begin{proof}
    Follows from the modularity of $\mathbb{B}(E)$.
\end{proof}

Let $\mathcal{M}$ be a modular cut of $\mathbb{B}(E)$, and let $e \notin E$. Some cases are easy to deal with:
\begin{itemize}[leftmargin=*]
    \item if $\mathcal{M} = \emptyset$, then $\mathbb{B}(E) +_\mathcal{M} e = \mathbb{B}(E \cup e)$;
    \item if $\mathcal{M} = \mathbb{B}(E)$ or $\mathcal{M} = \{ a \}^+$, $a \in E$, then $\mathbb{B}(E) +_\mathcal{M} e \cong \mathbb{B}(E)$;
    \item if $\mathcal{M} = \{E\}$, then $\mathbb{B}(E) +_\mathcal{M} e \cong \tau(\mathbb{B}(E \cup e))$.
\end{itemize}
By \cite[Proposition 5.8]{athanasiadis2022chain} (and independently proven by us in Theorem~\ref{maincor}), the chain polynomials of all the lattices listed above are real-rooted. This suggests that $\mathbb{B}(E) +_\mathcal{M} e$ has a real-rooted chain polynomial for each modular cut $\mathcal{M}$ of $\mathbb{B}(E)$. In fact, we now provide the theory necessary to prove this result.

By Proposition \ref{modular-cuts-Boolean}, if $\mathcal{M}$ is not of the form listed above, each single-element extension is given by to $\mathbb{B}(E) +_\mathcal{M} e$, where $\mathcal{M} = X^+$ for some $X \subseteq E$ such that $2 \leq |X| \leq n-1$. Hence, each element $Y$ of $\mathbb{B}(E) +_\mathcal{M} e$ falls into the following disjoint classes:
\begin{enumerate}[label=(\roman*)]
    \item [(i)] $Y \subseteq E \cup e$ such that $X \cup e$ is not a subset of $Y$. In this case, the rank of $Y$ is $|Y|$;
    \item [(ii)] $Y \subseteq E \cup e$ such that $X \cup e \subseteq Y$. In this case, the rank of $Y$ is $|Y| - 1$.
\end{enumerate}

It follows that, if $\emptyset \neq X \subseteq E$, then $\mathbb{B}(E) +_X e$ may be expressed as the \emph{direct product}  $\mathbb{B}(E) +_X e= \tau (\mathbb{B}(X \cup e)) \times \mathbb{B}(E \setminus X)$, see \cite[Chapter 3]{stanley2011enumerative}. We will use this fact and the next lemmas to prove that the chain polynomial of $\mathbb{B}(E) +_X e$ is real-rooted. Wagner \cite{MR1145841} defined the \emph{diamond product} as follows.  Let $\mathcal{E}: \mathbb{R}[t] \to \mathbb{R}[t]$ be the linear operator defined on the binomial basis by
    $$
        \mathcal{E} \left( \binom{t}{k} \right) = t^k
    $$
    for all $k \in \mathbb{N}$. Then the \emph{diamond product} of two polynomials $f$ and $g$ is defined by 
    $$
    (f \diamond g)=  \mathcal{E} \left( \mathcal{E}^{-1}(f) \mathcal{E}^{-1}(g)\right). 
    $$ 

\begin{lemma}\cite{MR1145841}
\label{lemma1}
    If $f$ and $g$ are real-rooted polynomials whose zeros all lie in the interval $[-1,0]$, then so is the polynomial $(f \diamond g)$.
\end{lemma}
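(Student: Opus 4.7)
Plan. My strategy is to follow Wagner's original approach \cite{MR1145841}: reduce Lemma \ref{lemma1} to a classical total-positivity statement for P\'olya frequency (PF) sequences. The first step is to make the diamond product explicit on the monomial basis. Combining the Vandermonde-type identity
\[
    \binom{t}{k}\binom{t}{\ell} = \sum_{j=0}^{\min(k,\ell)} \binom{k+\ell-j}{k-j,\,\ell-j,\,j}\binom{t}{k+\ell-j}
\]
with the definition of $\mathcal{E}$ yields
\[
    t^k \diamond t^\ell = \sum_{j=0}^{\min(k,\ell)} \binom{k+\ell-j}{k-j,\,\ell-j,\,j}\, t^{k+\ell-j},
\]
so $\diamond$ has nonnegative structure constants in the monomial basis; in particular $f \diamond g$ has nonnegative coefficients whenever $f$ and $g$ do.

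Next, I would appeal to the Aissen-Edrei-Schoenberg-Whitney theorem: a polynomial $p(t) = \sum_{k=0}^n c_k t^k$ with nonnegative coefficients has all of its zeros in $(-\infty, 0]$ if and only if the sequence $(c_0, \ldots, c_n)$ is a finite PF sequence, equivalently the associated lower-triangular Toeplitz matrix with first column $(c_0, c_1, \ldots, c_n, 0, 0, \ldots)$ is totally nonnegative. The stronger condition of zeros in $[-1, 0]$ is captured by imposing the PF condition simultaneously on $p(t)$ and on the ``companion'' polynomial $t^n p(-1 - 1/t)$, whose zeros lie in $(-\infty, 0]$ exactly when those of $p$ lie in $[-1, 0]$. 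The lemma then reduces to showing that the diamond product preserves both of these PF conditions.

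The main obstacle is this last reduction. The explicit formula exhibits the map on coefficient sequences as a bilinear operation whose structure tensor is the array of trinomial coefficients $\binom{k+\ell-j}{k-j,\ell-j,j}$. One can realize these trinomials as counts of non-intersecting lattice-path families via the Lindstr\"om-Gessel-Viennot lemma, from which the Cauchy-Binet formula delivers the required total nonnegativity of the Toeplitz matrix attached to $f \diamond g$. Carrying out this combinatorial bookkeeping --- and verifying that the companion PF condition is preserved as well --- is the technical heart of the argument, and is where I would expect most of the work to lie; merely having nonnegative structure constants is not enough, since closure of PF sequences under an induced product needs total nonnegativity of the whole operation, not just of its entries.
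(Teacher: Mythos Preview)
The paper does not prove Lemma~\ref{lemma1}; it is quoted from Wagner \cite{MR1145841} without argument, so there is no ``paper's own proof'' to compare against. That said, your outline has a genuine gap at the decisive step.

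Your plan is to characterize $[-1,0]$-rootedness by two \emph{separate} PF conditions --- one on the coefficient sequence of $p$ and one on that of the companion $t^{\deg p}p(-1-1/t)$ --- and then argue that each is preserved by $\diamond$. The characterization itself is fine, but the preservation of the first condition in isolation is simply false. Take $f(t)=g(t)=t+2$, which has PF coefficient sequence (equivalently, its single zero $-2$ lies in $(-\infty,0]$ but not in $[-1,0]$). Using your own formula $t\diamond t=2t^{2}+t$, $t\diamond 1=1\diamond t=t$, $1\diamond 1=1$, one computes
\[
(t+2)\diamond(t+2)=2t^{2}+5t+4,
\]
whose discriminant is $25-32<0$; the result is not even real-rooted, let alone negative-rooted. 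Hence the bilinear operation on coefficient sequences induced by $\diamond$ does \emph{not} preserve the PF property, and no amount of LGV/Cauchy--Binet bookkeeping on the trinomial structure constants can rescue an argument that treats the two PF conditions independently. The $[-1,0]$ hypothesis must be used as a single package.

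What actually works (and is the content of Wagner's argument) is a different total-positivity reformulation: for a polynomial $F$, the polynomial $\mathcal{E}(F)$ has all zeros in $[-1,0]$ if and only if the sequence of values $\bigl(F(0),F(1),F(2),\ldots\bigr)$ is a one-sided P\'olya frequency sequence. Indeed, writing $\mathcal{E}(F)(t)=c\prod_i(t+a_i)$ with $a_i\in[0,1]$ gives
\[
\sum_{n\ge0}F(n)\,x^{n}=\frac{c\prod_i\bigl((1-a_i)x+a_i\bigr)}{(1-x)^{\deg F+1}},
\]
a product of generating functions of PF sequences. Since $\mathcal{E}^{-1}(f\diamond g)=\mathcal{E}^{-1}(f)\cdot\mathcal{E}^{-1}(g)$, the question becomes whether the \emph{termwise} (Hadamard) product of two such PF sequences is again PF --- a single statement combining both halves of your splitting --- and this is precisely the total-positivity theorem that Wagner proves. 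Your explicit formula for $t^{k}\diamond t^{\ell}$ and the PF reformulation of negative-rootedness are correct ingredients, but the decomposition you propose cannot be carried through.
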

\begin{lemma}
\label{lemma2}
    Let $(P, \leq_P)$ and $(Q, \leq_Q)$ be two finite posets with a least and a greatest elements, respectively, such that $|P|, |Q| \geq 2$. Let
    \[
        p_P = \sum_{j \geq 0} |\{\zero_P =x_0 < x_1<\cdots < x_{j+1}=\hat{1}_P\}|t^{j+1}.
    \]
    Then $p_{P \times Q} = (p_P \diamond p_Q)$. 
    
\end{lemma}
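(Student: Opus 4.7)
The plan is to identify $\mathcal{E}^{-1}(p_P)(t)$ with the \emph{zeta polynomial} of $P$, so that the result reduces to the obvious multiplicativity of zeta polynomials under direct products.

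First, I would write $p_P(t) = \sum_{i \geq 1} a_i t^i$, where $a_i = a_i(P)$ is the number of strict chains $\zero_P = x_0 < x_1 < \cdots < x_i = \one_P$ of length $i$. For a nonnegative integer $n$, let $Z(P,n)$ count the multichains $\zero_P = y_0 \leq y_1 \leq \cdots \leq y_n = \one_P$. Grouping such a multichain by its distinct values, one sees that it corresponds to a choice of a strict chain $\zero_P = x_0 < x_1 < \cdots < x_i = \one_P$ together with an increasing sequence $1 \leq j_1 < j_2 < \cdots < j_i \leq n$ recording the first index at which each new value $x_k$ appears. This yields the identity $Z(P,n) = \sum_{i \geq 1} a_i \binom{n}{i}$.

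Since $\mathcal{E}^{-1}(t^i) = \binom{t}{i}$ by definition, the polynomial $\mathcal{E}^{-1}(p_P)(t) = \sum_i a_i \binom{t}{i}$ agrees with $Z(P,n)$ at every nonnegative integer $n$, and hence as polynomials $\mathcal{E}^{-1}(p_P)(t)$ coincides with the zeta polynomial of $P$; the same applies to $Q$ and to $P \times Q$. Because the order on $P \times Q$ is componentwise, a multichain of length $n$ from $(\zero,\zero)$ to $(\one,\one)$ is exactly a pair of multichains of length $n$ in $P$ (from $\zero_P$ to $\one_P$) and in $Q$ (from $\zero_Q$ to $\one_Q$), giving $Z(P \times Q, n) = Z(P,n)\,Z(Q,n)$ for every $n$.

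Passing to the polynomial identity $\mathcal{E}^{-1}(p_{P \times Q})(t) = \mathcal{E}^{-1}(p_P)(t) \cdot \mathcal{E}^{-1}(p_Q)(t)$ and applying $\mathcal{E}$ to both sides yields $p_{P \times Q}(t) = (p_P \diamond p_Q)(t)$, as desired. The only substantive step is the zeta-polynomial identity $Z(P,n) = \sum_i a_i \binom{n}{i}$; everything else is a formal manipulation in the binomial basis, so I do not anticipate any real obstacle.
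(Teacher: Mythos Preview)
Your proposal is correct and follows essentially the same route as the paper: identify $\mathcal{E}^{-1}(p_P)$ with the zeta polynomial via the multichain-to-strict-chain count $Z(P,n)=\sum_i a_i\binom{n}{i}$, use the obvious multiplicativity $Z(P\times Q,n)=Z(P,n)Z(Q,n)$, and apply $\mathcal{E}$. The paper's argument differs only in notation (writing $w_k(P)$ for your $a_{k+1}$) and in phrasing the binomial count.
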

\begin{proof}
    Define
    $$
        Z_P (n) \coloneqq |\{ \hat{0}_P = x_0 \leq x_1 \leq \cdots \leq x_n = \hat{1}_P \}|,
    $$
    the \emph{zeta polynomial} of $P$ \cite[Section 3.12]{stanley2011enumerative}. The number of multichains $\zero_P = x_0 \leq x_1 \leq \cdots \leq x_n= \one_P$ with $k+1$ distinct elements $\zero_P = \hat{x}_0 < \hat{x}_1 < \cdots < \hat{x}_k < \hat{x}_{k+1} = \one_P$ is equal to 
    $
        w_k(P) \binom n {k+1}, 
    $
    where 
    $$
        w_k(P)= |\{\zero_P < \hat{x}_1 < \cdots < \hat{x}_k<\one_P\}|.
    $$
    Hence,
    $$
        Z_P(n)= \sum_{k \geq 0} w_k(P) \binom n {k+1}. 
    $$
    Recall that $\EE\left( \binom t k \right)= t^k$ for all $k \in \NN$. Hence,
    \[
        \EE(Z_P) =\sum_{k \geq 0} w_k(P) t^{k+1} = p_P.
    \]
    Observe that
    \begin{align*}
        Z_{P \times Q}(n) &= |\{ (\hat{0}_P, \hat{0}_Q) = (x_0, y_0) \leq (x_1, y_1) \leq \cdots \leq (x_n, y_n) = (\hat{1}_P, \hat{1}_Q) \}|\\
        &= Z_P(n) Z_Q(n).
    \end{align*}
    Thus,
    $$
        (p_{P} \diamond p_{Q}) = \mathcal{E} (\mathcal{E}^{-1}(p_{P}) \mathcal{E}^{-1}(p_{Q})) = \mathcal{E}(Z_P Z_Q) = \mathcal{E}(Z_{P \times Q}) = p_{P \times Q}.
    $$
\end{proof}

\begin{lemma}
\label{lemma5}
    Let $P$ be a finite poset with a least and a greatest elements. Then
    \[
        c_P = \frac{(1+t)^2}{t}p_P.
    \]
    In particular, $p_P$ is $[-1,0]$-rooted if and only if $c_P$ is $[-1,0]$-rooted.
\end{lemma}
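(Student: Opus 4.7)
The plan is to prove the identity by a simple enumeration argument that groups the chains of $P$ according to their "completion" by the extreme elements $\hat{0}_P$ and $\hat{1}_P$, then deduce the equivalence of $[-1,0]$-rootedness by factoring out non-negative contributions.

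First, I would rewrite $p_P(t)$ by setting $\tilde{c}_j$ equal to the number of chains $\hat{0}_P = x_0 < x_1 < \cdots < x_j < x_{j+1} = \hat{1}_P$ in $P$, so that $p_P(t) = \sum_{j \geq 0} \tilde{c}_j\, t^{j+1} = t \sum_{j \geq 0} \tilde{c}_j\, t^j$. Next, I would observe that every chain $C$ of $P$ has a unique \emph{completion} $\bar{C} := C \cup \{\hat{0}_P, \hat{1}_P\}$, which is a chain of $P$ containing both extreme elements; conversely, each such completed chain of size $j+2$ arises from exactly four chains of $P$, obtained by independently choosing whether to delete $\hat{0}_P$ and whether to delete $\hat{1}_P$, of respective sizes $j$, $j+1$, $j+1$, $j+2$.

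Summing the contribution $t^j + 2t^{j+1} + t^{j+2} = (1+t)^2 t^j$ over all completions then yields
\[
c_P(t) = \sum_{j \geq 0} \tilde{c}_j (1+t)^2 t^j = \frac{(1+t)^2}{t}\, p_P(t),
\]
which is the desired identity.

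For the last sentence, I would write $p_P(t) = t\, q(t)$ with $q(t) = \sum_{j \geq 0} \tilde{c}_j t^j \in \RR[t]$, so that $c_P(t) = (1+t)^2 q(t)$. Since the factors $t$ and $(1+t)^2$ only contribute zeros in $[-1,0]$, both $p_P(t)$ and $c_P(t)$ are $[-1,0]$-rooted if and only if $q(t)$ is. The only subtlety to watch for is the case $|P| = 1$, where $\hat{0}_P = \hat{1}_P$ and the formula degenerates; in all other cases the argument is straightforward and I do not expect any real obstacle.
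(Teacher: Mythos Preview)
Your argument is correct and is exactly the reasoning the paper has in mind: the paper's own proof is the single line ``Obvious from the definition of $c_P(t)$ and $p_P(t)$,'' and what you have written is simply that obvious computation spelled out via the four-to-one map $C \mapsto C \cup \{\hat 0_P, \hat 1_P\}$. Your observation about the degenerate case $|P|=1$ is also apt; the paper introduces $p_P$ in the preceding lemma under the standing assumption $|P|\geq 2$, so this edge case is implicitly excluded.
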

\begin{proof}
    Obvious from the definition of $c_P$ and $p_P$.
\end{proof}

Recall that the chain of polynomial of a poset $P$ is the $f$-polynomial of the order complex $\Delta_P$, consisting of all chains in  $P$. The $h$-polynomial of a simplicial complex $\Delta$ is related to the $f$-polynomial by 
$$
f_\Delta= (1+t)^d h_\Delta\left(\frac t {1+t}\right).
$$
Hence it follows that if the coefficients of the $h_\Delta$ are nonnegative, then any real zero of $f_\Delta$ is necessarily located in the interval $[-1,0]$. Non-negativity of the coefficients of $h_\Delta$ holds for many important classes of complexes, for example 
Cohen-Macaulay complexes which include order complexes of semimodular lattices as well as distributive lattices \cite{BGS-81}, but also order complexes of $\mathrm{TN}$-posets \cite{branden2024totally}. By Lemmas \ref{lemma2} and \ref{lemma4} we conclude
\begin{proposition}
Suppose $P$ and $Q$ are posets whose chain polynomials are real-rooted with all zeros located in the interval $[-1,0]$, then so is the chain polynomial of $P \times Q$.  
\end{proposition}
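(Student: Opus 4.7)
The plan is to chain together Lemmas \ref{lemma1}, \ref{lemma2}, and \ref{lemma5}. Since Lemmas \ref{lemma2} and \ref{lemma5} are stated for posets with a least and a greatest element, I would first note that this is the setting in which the proposition should be read; if $P$ and $Q$ each have $\hat{0}$ and $\hat{1}$, then $P\times Q$ inherits $\hat{0}=(\hat{0}_P,\hat{0}_Q)$ and $\hat{1}=(\hat{1}_P,\hat{1}_Q)$, so all three posets are legitimate inputs to the lemmas.

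By Lemma \ref{lemma5}, $c_P(t)=\frac{(1+t)^2}{t}p_P(t)$, and since $p_P(0)=0$ the factor $1/t$ is harmless, so $c_P(t)$ being $[-1,0]$-rooted is equivalent to $p_P(t)$ being $[-1,0]$-rooted. The same equivalence applies to $Q$ and to $P\times Q$. Consequently, the hypotheses give that $p_P(t)$ and $p_Q(t)$ are $[-1,0]$-rooted, and the task reduces to showing that $p_{P\times Q}(t)$ is $[-1,0]$-rooted.

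By Lemma \ref{lemma2}, $p_{P\times Q}(t)=(p_P\diamond p_Q)(t)$, and by Wagner's Lemma \ref{lemma1}, the diamond product of two polynomials that are real-rooted with all zeros in $[-1,0]$ is again real-rooted with all zeros in $[-1,0]$. Applying this with $f=p_P$ and $g=p_Q$ yields that $p_{P\times Q}(t)$ is $[-1,0]$-rooted. A final appeal to Lemma \ref{lemma5}, now in the direction $p_{P\times Q}\leadsto c_{P\times Q}$, completes the proof.

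There is essentially no obstacle here: the proposition is a direct formal corollary of the three preceding lemmas. The only point that requires care is the bookkeeping around the factor $\frac{(1+t)^2}{t}$ in Lemma \ref{lemma5}, and the observation that the product of two posets, each with a least and a greatest element, again has a least and a greatest element (so that Lemma \ref{lemma2} applies to $P\times Q$).
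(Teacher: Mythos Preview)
Your proof is correct and follows essentially the same route as the paper: reduce from $c$ to $p$ via Lemma~\ref{lemma5}, apply the diamond-product identity of Lemma~\ref{lemma2}, and then use Wagner's result to conclude $[-1,0]$-rootedness of $p_{P\times Q}$. The paper states the proposition as an immediate consequence of Lemmas~\ref{lemma2} and~\ref{lemma4}; your choice to invoke Lemma~\ref{lemma1} rather than Lemma~\ref{lemma4} is in fact the cleaner reference, since Lemma~\ref{lemma1} directly yields the $[-1,0]$-rootedness needed, whereas Lemma~\ref{lemma4} only asserts real-rootedness of the diamond product. Your remark that the statement should be read for posets with $\hat{0}$ and $\hat{1}$ (so that Lemmas~\ref{lemma2} and~\ref{lemma5} apply) is a useful clarification that the paper leaves implicit.
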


\begin{lemma}
    Let $P$ be a finite poset with a least and a greatest elements of quasi-rank $d+1$. Then $p_P$ satisfies the following relation:
    \[
        p_P  = p_{\tau (P)} + t \sum_{\substack{h \in P \\ \rho(h) = d}} p_{\langle h \rangle}.
    \]
\end{lemma}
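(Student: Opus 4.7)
The plan is to prove the identity by a direct counting argument that partitions the chains from $\zero_P$ to $\one_P$ in $P$ according to whether they contain an element of quasi-rank $d$. This mirrors the proof of Lemma \ref{main-rec}, but one must track the exponents in $p_P(t)$ carefully, since $p_P(t)$ weights a chain $\zero_P = y_0 < y_1 < \cdots < y_m = \one_P$ by $t^m$, not by $t^{m-2}$.

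First I would record two structural facts about such chains. Along any chain $\zero_P = y_0 < y_1 < \cdots < y_m = \one_P$ the quasi-ranks are strictly increasing, because appending $y_{i+1}$ to a maximal chain below $y_i$ produces a strictly longer chain below $y_{i+1}$. Since $\rho(\one_P) = d+1$, this forces the chain to contain \emph{at most one} element of quasi-rank $d$. Moreover, if $y_\ell$ has quasi-rank $d$, then $\rho(y_{\ell+1}) > d$ yields $\rho(y_{\ell+1}) = d+1$, whence $y_{\ell+1} = \one_P$ and $\ell = m-1$; in other words, the rank-$d$ element must sit immediately below $\one_P$ in the chain.

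Next I would split the chains counted by $p_P(t)$ into two classes. A chain from $\zero_P$ to $\one_P$ containing \emph{no} element of quasi-rank $d$ is exactly a chain from $\zero_P$ to $\one_P$ in $\tau(P)$ and carries the same weight; summing such contributions gives $p_{\tau(P)}(t)$. A chain containing an element $h \in P$ with $\rho(h) = d$ factors uniquely as a chain $\zero_P = y_0 < y_1 < \cdots < y_{m-1} = h$ inside $\langle h \rangle$ followed by the cover $h < \one_P = y_m$. The original chain has $m-1$ intermediate elements (namely $y_1, \ldots, y_{m-1}$) and therefore contributes $t^m$ to $p_P(t)$, while the truncated chain in $\langle h \rangle$ has $m-2$ intermediate elements and contributes $t^{m-1}$ to $p_{\langle h \rangle}(t)$. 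Summing over all $h$ of quasi-rank $d$, the contribution of the second class is precisely $t\sum_{\rho(h)=d} p_{\langle h \rangle}(t)$, which supplies the missing factor of $t$.

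Combining both classes gives the stated identity. The argument is pure bookkeeping; the only point requiring care is the shift in exponent explained above, which accounts for the factor of $t$ in front of the sum, so I do not anticipate any real obstacle.
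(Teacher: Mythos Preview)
Your argument is correct. The chain-counting is exactly the right idea, and the two structural facts you isolate (strict increase of quasi-ranks along a chain, and that a rank-$d$ element must be the penultimate entry) are precisely what is needed to make the bijection go through. The exponent bookkeeping is also right: a chain $\zero_P=y_0<\cdots<y_m=\one_P$ contributes $t^m$ to $p_P(t)$, and after stripping off $\one_P$ the remaining chain $\zero_P=y_0<\cdots<y_{m-1}=h$ contributes $t^{m-1}$ to $p_{\langle h\rangle}(t)$, accounting for the factor $t$.

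The paper's proof is packaged differently: it simply invokes Lemma~\ref{main-rec} (the analogous identity for $c_P(t)$) together with Lemma~\ref{lemma5}, which says $c_P(t)=\tfrac{(1+t)^2}{t}\,p_P(t)$ for any finite poset with $\zero$ and $\one$. Since $\tau(P)$ and each $\langle h\rangle$ also have a least and greatest element, multiplying the $c$-identity by $\tfrac{t}{(1+t)^2}$ yields the $p$-identity immediately. Your approach instead reruns the counting argument of Lemma~\ref{main-rec} directly for $p_P$, which is just as valid and arguably more self-contained; the paper's route has the advantage of avoiding any repetition once Lemma~\ref{main-rec} is in hand.
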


\begin{proof}
    Follows directly from Lemmas \ref{main-rec} and \ref{lemma5}.
\end{proof}

\begin{lemma}\cite[Theorem 3]{MR2104673}
\label{lemma4}
    Let $h$ be a $[-1,0]$-rooted polynomial and $f$ be a real-rooted polynomial. Then $(f \diamond h)$ is real-rooted, and, if $g \preceq f$, then $(g \diamond h) \preceq (f \diamond h)$.
\end{lemma}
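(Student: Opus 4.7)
The plan is to view $T_h \colon f \mapsto f \diamond h$ as a linear operator on polynomials and to establish both conclusions by exhibiting strong total-positivity properties of its matrix representation in the monomial basis.

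A direct computation using the Chu--Vandermonde identity
\[
    \binom{t}{n}\binom{t}{k} \;=\; \sum_{i=0}^{\min(n,k)}\binom{n+k-i}{n}\binom{n}{i}\binom{t}{n+k-i}
\]
to expand $\binom{t}{n}\,\mathcal{E}^{-1}(h)(t)$ in the binomial basis, followed by applying $\mathcal{E}$, yields the closed form
\[
    (T_h)_{n,j} \;=\; \binom{j}{n}\,c_j^{(n)} \qquad \text{where}\qquad c_j^{(n)} := [x^j]\bigl((1+x)^n h(x)\bigr),
\]
for $0 \leq n \leq j$ and zero otherwise. (As a sanity check, $n=0$ recovers $T_h(1) = h(t)$ and $n=1$ recovers $T_h(t) = \sum_j j(b_{j-1}+b_j)\,t^j$ with $h = \sum_k b_k t^k$.)

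The central claim is that $(T_h)_{n,j}$ is totally nonnegative. Since $h$ is $[-1,0]$-rooted, so is $(1+x)^n h(x)$ for every $n$, and the Aissen--Schoenberg--Whitney theorem then ensures that each sequence $(c_j^{(n)})_j$ is a P\'olya frequency sequence. Coupled with the identity $c_j^{(n_2)} = \sum_i \binom{n_2 - n_1}{i}\,c_{j-i}^{(n_1)}$ (which relates different rows) and the total positivity of the binomial triangle, every $2 \times 2$ minor of $(T_h)$ can be expanded as a nonnegative combination of $2 \times 2$ minors of the P\'olya frequency sequence $(c_j^{(n_1)})_j$ multiplied by nonnegative binomial ratios, hence is nonnegative.

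Having verified the $\mathrm{TP}_2$ (in fact $\mathrm{TN}$) property, one then invokes the classical principle, in the spirit of Lemma \ref{basint}(i), that an operator whose matrix is $\mathrm{TP}_2$ preserves both real-rootedness and two-term interlacing: real-rootedness of $f \diamond h$ follows by applying the principle to the polynomial $f$, and $(g \diamond h) \preceq (f \diamond h)$ follows by applying it to the interlacing pair $g \preceq f$. The principal obstacle is the $2 \times 2$ minor verification: the determinantal identity it demands is a delicate interplay between the Chu--Vandermonde weights and the P\'olya frequency structure of the coefficient sequence $(b_k)$ of $h$, and the detailed algebraic manipulation is the content of \cite[Theorem 3]{MR2104673}.
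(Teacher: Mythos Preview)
The paper does not give its own proof of this lemma; it simply quotes \cite[Theorem 3]{MR2104673}. So there is no argument in the paper to compare against, and your proposal has to stand on its own.

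Unfortunately it does not. The step you call ``the classical principle, in the spirit of Lemma~\ref{basint}(i)''---that a linear operator whose matrix in the monomial basis is $\mathrm{TP}_2$ preserves real-rootedness and interlacing---is false. Lemma~\ref{basint}(i) is a statement about a $\mathrm{TP}_2$ matrix acting on an interlacing \emph{sequence of polynomials}, not about an operator acting on a single polynomial through its coefficient vector. These are different things, and the second fails: take $T(t^n)=1+t+\cdots+t^n$, whose matrix (the lower-triangular all-ones matrix) is totally nonnegative, hence $\mathrm{TP}_2$; yet $T(t^2)=1+t+t^2$ is not real-rooted. You cannot rescue this by feeding the monomial sequence $\{t^j\}_{j=0}^N$ into Lemma~\ref{basint}(i) either, because for $N\geq 2$ that sequence is not in $\mathcal{P}_N$: the relation $1 \preceq t^2$ fails since the degrees differ by two.

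There is a second problem. After setting up the matrix $(T_h)_{n,j}=\binom{j}{n}\,[x^j]\bigl((1+x)^n h(x)\bigr)$---a formula which is indeed correct---you identify the $2\times 2$ minor verification as ``the principal obstacle'' and then say that ``the detailed algebraic manipulation is the content of \cite[Theorem 3]{MR2104673}.'' That is the very reference being cited for the lemma, so the argument is circular: you have restated the difficulty rather than resolved it. A genuine proof of Lemma~\ref{lemma4} needs a different mechanism---for instance, working through the real-rootedness-preserving properties of the operator $\mathcal{E}$ itself, as in the cited reference---rather than a bare $\mathrm{TP}_2$ claim about the monomial matrix.
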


\begin{corollary}
\label{SEE-thr}
    Let $E$ be a finite set and $e \notin E$. The chain polynomial of $\mathbb{B}(E) +_\mathcal{M} e$ is $[-1,0]$-rooted for any modular cut $\mathcal{M}$ of $\mathbb{B}(E)$.
\end{corollary}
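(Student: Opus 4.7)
The plan is to reduce to the direct product decomposition recorded just before Wagner's diamond product is introduced, namely
\[
\mathbb{B}(E) +_X e \;\cong\; \tau(\mathbb{B}(X \cup e)) \times \mathbb{B}(E \setminus X),
\]
for $\emptyset \neq X \subseteq E$, and then to exploit the multiplicativity of the $p$-polynomial under direct products to invoke Lemma \ref{lemma1}.

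First, I would dispose of the trivial modular cuts. By Proposition \ref{modular-cuts-Boolean} every non-empty modular cut is principal, so $\mathcal{M}$ is one of $\emptyset$, $\mathbb{B}(E)$, $\{a\}^+$ for $a \in E$, $\{E\}$, or $X^+$ with $2 \leq |X| \leq n-1$. In the first four cases, the list of identifications given in the text shows that $\mathbb{B}(E) +_\mathcal{M} e$ is isomorphic to $\mathbb{B}(E \cup e)$, $\mathbb{B}(E)$, or $\tau(\mathbb{B}(E \cup e))$. All three are paving lattices (Boolean algebras or their truncations), so Theorem \ref{maincor} gives $[-1,0]$-rooted chain polynomials.

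For the remaining case $\mathcal{M} = X^+$ with $2 \leq |X| \leq n-1$, I would argue as follows. By Lemma \ref{lemma5} it suffices to show that $p_L(t)$ is $[-1,0]$-rooted for $L = \mathbb{B}(E) +_X e$. Using the decomposition $L \cong \tau(\mathbb{B}(X \cup e)) \times \mathbb{B}(E \setminus X)$ together with Lemma \ref{lemma2}, we obtain
\[
p_L(t) = \bigl(p_{\tau(\mathbb{B}(X \cup e))} \diamond p_{\mathbb{B}(E \setminus X)}\bigr)(t).
\]
Both factors on the right are $[-1,0]$-rooted: $p_{\mathbb{B}(E \setminus X)}(t)$ is $[-1,0]$-rooted by Lemma \ref{lemma5} applied to the Boolean algebra (whose chain polynomial is a product of linear factors with roots in $[-1,0]$), and $p_{\tau(\mathbb{B}(X \cup e))}(t)$ is $[-1,0]$-rooted by Lemma \ref{lemma5} applied to Theorem \ref{maincor} (truncated Boolean algebras are paving lattices). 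Lemma \ref{lemma1} then yields that $p_L(t)$ is $[-1,0]$-rooted, and Lemma \ref{lemma5} transfers this back to $c_L(t)$.

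The main obstacle is really the bookkeeping for the modular-cut classification rather than any hard analytic input: the diamond product machinery is tailor-made for direct products, so once the identification $\mathbb{B}(E)+_X e \cong \tau(\mathbb{B}(X\cup e)) \times \mathbb{B}(E\setminus X)$ is in place, the real-rootedness follows immediately. One should just double-check the boundary cases $|X|=1$ and $|X|=|E|$ (where the ``truncation'' or ``Boolean'' factor degenerates) so that no modular cut is missed; these are precisely the trivial cases already handled.
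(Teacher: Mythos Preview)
Your proposal is correct and follows essentially the same route as the paper: handle the degenerate modular cuts via the listed identifications and Theorem~\ref{maincor}, then for $\mathcal{M}=X^+$ with $2\le |X|\le |E|-1$ use the decomposition $\mathbb{B}(E)+_X e\cong \tau(\mathbb{B}(X\cup e))\times \mathbb{B}(E\setminus X)$, Lemma~\ref{lemma2}, Lemma~\ref{lemma5}, and Wagner's Lemma~\ref{lemma1} to conclude. The only cosmetic difference is that you explicitly invoke Lemma~\ref{lemma5} twice (to pass from $c$ to $p$ and back), whereas the paper leaves the second direction implicit.
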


\begin{proof}
    The cases where $\mathcal{M} = \emptyset$ or $\mathcal{M} = X^+$, $|X| \in \{ 0, 1, |E|\}$, were already discussed. Suppose now that $\mathcal{M} = X^+$, $2 \leq |X| < |E|$. Then $\mathbb{B}(E) +_X e = \tau(\mathbb{B}(X \cup e)) \times \mathbb{B}(E \setminus X)$. So, by Lemma~\ref{lemma2},
    \[
        p_{\mathbb{B}(E) +_X e} = \left( p_{\tau(\mathbb{B}(X \cup e))} \diamond p_{\mathbb{B}(E \setminus X)} \right) .
    \]
    By Theorem \ref{maincor}, $c_{\tau(\mathbb{B}(X \cup e))} $ and $c_{\mathbb{B}(E \setminus X)} $ are $[-1,0]$-rooted. So, the polynomials $p_{\tau(\mathbb{B}(X \cup e))} $ and $p_{\mathbb{B}(E \setminus X)} $ are $[-1,0]$-rooted by Lemma \ref{lemma5}, which implies that $p_{\mathbb{B}(E) +_X e}$ is real-rooted by Lemmas~\ref{lemma2} and \ref{lemma1}.
\end{proof}

By \cite[Proposition 3.3]{ernest2023extensions}, not all modular cuts of $\tau^k(\mathbb{B}(E))$, $1
\leq k < |E|-1$, are principal. However, its principal modular cuts are easy to describe:

\begin{lemma}
\label{new-lemma}
    Let $\mathcal{M} = X^+$ be a principal modular cut of $\tau^k(\mathbb{B}(E))$, $1 \leq k < |E|$. Then
    \[
        \tau^k(\mathbb{B}(E)) +_\mathcal{M} e \cong \tau^k (\mathbb{B}(E) +_{\mathcal{N}} e),
    \]
    where $\mathcal{N} = X^+$ is a principal modular cut of $\mathbb{B}(E)$. In particular, for $k=1$, the set $\mathcal{H}$ of co-atoms of $\tau(\mathbb{B}(E)) +_\mathcal{M} e$ is given by
    \[
        \mathcal{H} = \{ A \times (E \setminus X) \colon A \in \mathcal{A} \} \sqcup \{ (X \cup e) \times B \colon B \in \mathcal{B} \},
    \]
    where $\mathcal{A}$ and $\mathcal{B}$ are the sets of co-atoms of $\tau(\mathbb{B}(X \cup e))$ and $\mathbb{B}(X \setminus E)$, respectively.
\end{lemma}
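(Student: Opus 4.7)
The plan is to identify both sides as flat lattices of one and the same matroid on the ground set $E\cup e$, and then to derive the ``in particular'' description from the product decomposition recalled before Corollary~\ref{SEE-thr}.

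First I check that both sides are geometric of rank $|E|-k$ with atom set $E\cup e$. This is immediate on the right, since $\mathbb{B}(E)+_\mathcal{N} e$ is of rank $|E|$ with atoms $E\cup e$ and truncation preserves this. On the left the only non-obvious atom is $e$, which appears as a class (iii) element: because $|X|\geq 2$ (or $X=E$, a case handled separately and reducing both sides to $\tau^{k+1}(\mathbb{B}(E\cup e))$), no cover of $\emptyset$ in $\tau^k(\mathbb{B}(E))$ lies in $\mathcal{M}=X^+$.

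The key step is to verify that the matroid rank functions on $E\cup e$ coincide. For $A\subseteq E\cup e$ write $A_0 = A\setminus\{e\}$ and $\varepsilon(A)=\mathbf{1}[\,e\in A\text{ and }X\not\subseteq A_0\,]$. On the right, the rank of $A$ in $\mathbb{B}(E)+_\mathcal{N} e$ is $|A_0|+\varepsilon(A)$, and truncating $k$ times caps it at $|E|-k$. On the left, I apply the definition of single-element extension to $\tau^k(\mathbb{B}(E))$: the closure of $A_0$ there is $A_0$ when $|A_0|\leq |E|-k-1$ and $E$ otherwise, so the class (iii) increment of $+1$ is triggered precisely when $|A_0|\leq |E|-k-1$ and $X\not\subseteq A_0$. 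A short case analysis shows both sides evaluate to $\min(|A_0|+\varepsilon(A),\,|E|-k)$. Since a matroid is determined by its rank function on the ground set, the two flat lattices coincide.

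For the ``in particular'' part with $k=1$, I chase the product isomorphism $\mathbb{B}(E)+_\mathcal{N} e\cong \tau(\mathbb{B}(X\cup e))\times \mathbb{B}(E\setminus X)$ through the main isomorphism. Identifying a pair $(P,Q)$ with the subset $P\cup Q$ of $E\cup e$, the elements covered by the top of $\mathbb{B}(E)+_\mathcal{N} e$ — which describe the local structure at the top of $\tau(\mathbb{B}(E)+_\mathcal{N} e)\cong \tau(\mathbb{B}(E))+_\mathcal{M} e$ — are exactly the pairs $(A,\hat 1_Q)$ with $A$ a co-atom of $P=\tau(\mathbb{B}(X\cup e))$, together with the pairs $(\hat 1_P,B)$ with $B$ a co-atom of $Q=\mathbb{B}(E\setminus X)$, yielding under the set identification the displayed formula for $\mathcal{H}$. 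I expect the main technical obstacle to be the truncation boundary $|A_0|=|E|-k$ with $X\not\subseteq A_0$: there the left-hand rank is governed by the class~(i) rule (since $\mathrm{cl}_{\tau^k(\mathbb{B}(E))}(A_0)=E\supseteq X$ places the closure inside $\mathcal{M}$) while the right-hand rank comes from the truncation cap, and one must confirm both reductions return $|E|-k$.
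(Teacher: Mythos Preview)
Your argument is correct and substantially more detailed than the paper's. The paper's entire proof is a one-line citation: ``Follows from \cite[Proposition 7.3.3]{theory-of-matroids}'', which records the general fact that truncation commutes with single-element extension. You instead give a self-contained verification by computing the matroid rank function on $E\cup e$ for both sides and checking that it equals $\min(|A_0|+\varepsilon(A),\,|E|-k)$ in each case; the boundary case $|A_0|\geq |E|-k$ with $X\not\subseteq A_0$ that you flag as the main obstacle does work out exactly as you describe, since on the left the closure of $A_0$ is $E\in\mathcal{M}$ and on the right the truncation cap applies. This approach buys independence from the reference and makes the mechanism transparent, at the cost of a routine case analysis.

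One remark on the ``in particular'' clause: your phrase ``describe the local structure at the top of $\tau(\mathbb{B}(E)+_\mathcal{N} e)$'' is vague, and in fact the set $\mathcal{H}$ displayed in the lemma consists of rank-$(|E|-1)$ elements of $\mathbb{B}(E)+_\mathcal{N} e$, which are the co-atoms of $\mathbb{B}(E)+_\mathcal{N} e$ rather than of its truncation (those would sit at rank $|E|-2$). This is a wording issue already present in the lemma statement; the set you compute via the product decomposition is exactly the $\mathcal{H}$ that gets used in the subsequent corollary, so your identification is the right one for the intended application.
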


\begin{proof}
    Follows from \cite[Proposition 7.3.3]{theory-of-matroids}.
\end{proof}

\begin{corollary}
\label{SEE-cor}
    Let $E$ be a finite set and $e \notin E$. If $\mathcal{M} = \emptyset$ or $\mathcal{M}$ is a principal modular cut of $\tau(\mathbb{B}(E))$, then the chain polynomial of $\tau(\mathbb{B}(E)) +_\mathcal{M} e$ is $[-1,0]$-rooted.
\end{corollary}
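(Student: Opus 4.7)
The plan is to split into the easy case $\mathcal M = \emptyset$ and the principal case, reducing the latter via Lemma~\ref{new-lemma} to a truncation of the direct product already analysed in Corollary~\ref{SEE-thr}. For $\mathcal M = \emptyset$, $\tau(\mathbb B(E)) +_\emptyset e$ equals $\tau(\mathbb B(E\cup e))$, a paving lattice, so Theorem~\ref{maincor} applies. The trivial principal cases $\mathcal M = X^+$ with $|X| \in \{0,1,|E|\}$ likewise reduce $\tau(\mathbb B(E)) +_\mathcal M e$ to either $\tau(\mathbb B(E))$ or $\tau(\mathbb B(E\cup e))$, both again paving lattices, so I may assume $\mathcal M = X^+$ with $2 \le |X| < |E|$.

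By Lemma~\ref{new-lemma}, $\tau(\mathbb B(E)) +_\mathcal M e \cong \tau(L)$ where $L = A\times B$ with $A = \tau(\mathbb B(X\cup e))$ and $B = \mathbb B(E\setminus X)$. Put $\alpha = \binom{|X|+1}{2}$ and $\beta = |E|-|X|$. The co-atom description in Lemma~\ref{new-lemma} identifies the co-atoms of $L$ in two families of sizes $\alpha$ and $\beta$, with principal order ideals isomorphic respectively to $\mathbb B_{|X|-1}\times B$ and $A\times \mathbb B_{|E|-|X|-1}$. Combining the recursion $p_L = p_{\tau(L)} + t\sum_h p_{[\hat 0,h]}$ (the unnumbered lemma following Lemma~\ref{lemma5}) with Lemma~\ref{lemma2} then yields the explicit identity
\[
p_{\tau(L)} \;=\; p_A \diamond p_B \;-\; t\alpha\, p_{\mathbb B_{|X|-1}}\diamond p_B \;-\; t\beta\, p_A\diamond p_{\mathbb B_{|E|-|X|-1}}.
\]

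Now I plan to verify that each of $p_A, p_B, p_{\mathbb B_{|X|-1}}, p_{\mathbb B_{|E|-|X|-1}}$ is $[-1,0]$-rooted (using Theorem~\ref{maincor} applied to the paving lattice $A$ and the Boolean $B$, together with Lemma~\ref{lemma5}), whence each diamond product on the right-hand side is $[-1,0]$-rooted by Lemma~\ref{lemma1}. The interlacings $p_{\mathbb B_{|X|-1}} \preceq p_A$ and $p_{\mathbb B_{|E|-|X|-1}} \preceq p_B$ arise as standard instances of the interlacing of a co-atom interval with the ambient $p$-polynomial, and by the monotonicity part of Lemma~\ref{lemma4} they lift to interlacings of the two subtracted diamond products with $p_A\diamond p_B = p_L$. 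Writing $G = \alpha\,p_{\mathbb B_{|X|-1}}\diamond p_B + \beta\,p_A\diamond p_{\mathbb B_{|E|-|X|-1}}$, we get $G \preceq p_L$ and $p_{\tau(L)} = p_L - tG$.

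The main obstacle is that a direct count of maximal chains shows the leading coefficients of $p_L$ and $tG$ cancel exactly, so Lemma~\ref{basint}(iv) cannot be invoked at $\lambda = 1$. To overcome this I would apply Lemma~\ref{basint}(iv) for every $\lambda \in [0,1)$, obtaining the chain of interlacings $G \preceq p_L - \lambda tG \preceq p_L$ and hence $[-1,0]$-rootedness of $p_L - \lambda tG$ throughout. Letting $\lambda \to 1^-$ and using continuity of roots, together with the degree drop forced by the leading-coefficient cancellation, yields $p_{\tau(L)} = p_L - tG$ real-rooted with zeros in $[-1,0]$; Lemma~\ref{lemma5} then transfers this to the chain polynomial of $\tau(\mathbb B(E)) +_\mathcal M e$.
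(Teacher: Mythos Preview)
Your proposal follows the same architecture as the paper's proof: dispose of degenerate cases, use Lemma~\ref{new-lemma} to write $\tau(\mathbb B(E))+_\mathcal M e\cong \tau(L)$ with $L=A\times B$, expand via the $p_{\tau(L)}=p_L-t\sum_h p_{\langle h\rangle}$ recursion and Lemma~\ref{lemma2}, and then push the co-atom interlacings through the diamond product with Lemma~\ref{lemma4} before invoking Lemma~\ref{basint}(iv). So the core strategy is identical.

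Two small slips in the easy cases: for $\mathcal M=\emptyset$ the extension is $\tau(\mathbb B(E))\times\mathbb B_1$ (a coloop is added), not $\tau(\mathbb B(E\cup e))$; and for $X=E$ one gets $\tau^2(\mathbb B(E\cup e))$. Both are still paving lattices, so your appeal to Theorem~\ref{maincor} goes through unchanged.

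Where your write-up genuinely adds something is the observation about the leading coefficient. The paper simply invokes Lemma~\ref{basint}(iv) at $\lambda=1$, but as you noticed, since every maximal chain of $L$ passes through a unique co-atom, the top coefficients of $p_L$ and $tG$ coincide and the hypothesis ``leading coefficient of $f-\lambda tg$ positive'' fails at $\lambda=1$. The paper does not address this. Your remedy---apply Lemma~\ref{basint}(iv) for $\lambda\in[0,1)$ and pass to the limit---is correct: for $\lambda<1$ one obtains $G\preceq p_L-\lambda tG$, and since $G$ is $[-1,0]$-rooted and $p_L-\lambda tG=(1-\lambda)p_L+\lambda p_{\tau(L)}$ has nonnegative coefficients, all but the smallest root of $p_L-\lambda tG$ lie in $[-1,0]$ while the smallest escapes to $-\infty$ as $\lambda\to 1^-$. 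Continuity of roots then places the $n-1$ surviving zeros of $p_{\tau(L)}$ in $[-1,0]$. Your phrase ``$[-1,0]$-rootedness of $p_L-\lambda tG$ throughout'' is slightly inaccurate for this reason, but the conclusion for $p_{\tau(L)}$ is sound.
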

    
\begin{proof}
    If $\mathcal{M} = \emptyset$, then $\tau(\mathbb{B}(E)) +_\mathcal{M} e$ is a paving lattice whose set of co-atoms is $\mathcal{H} = \{ E \} \cup \left\{ A \cup e \colon A \in \mathcal{P}^{|E|-2}_{E} \right\}$, a $(|E|-1)$-partition of $E \cup e$. Hence, its chain polynomial is real-rooted by Theorem~\ref{maincor}.

    If $\mathcal{M} = \tau(\mathbb{B}(E))$, then $\tau(\mathbb{B}(E)) +_\mathcal{M} e \cong \tau(\mathbb{B}(E))$, whose chain polynomial is real-rooted by \cite[Proposition 5.8]{athanasiadis2022chain}.
    
    If $\mathcal{M} = X^+$, $|X| \geq 1$, is a modular cut of $\tau(\mathbb{B}(E))$, let $\mathcal{N} = X^+$ be a modular cut of $\mathbb{B}(E)$. By Lemma \ref{new-lemma},
    \begin{align*}
        p_{\tau(\mathbb{B}(E)) +_\mathcal{M} e} &= p_{\mathbb{B}(E) +_\mathcal{N} e} - t \sum\limits_{H \in \mathcal{H}} p_{\langle H \rangle} \\
        &= p_{\tau(\mathbb{B}(X \cup e))} \diamond p_{\mathbb{B}(E \setminus X)} - t \left( \sum\limits_{A \in \mathcal{A}} p_{\langle A \times (E \setminus X) \rangle} + \sum\limits_{B \in \mathcal{B}} p_{\langle (X \cup e) \times B \rangle} \right)\smash{\text{\raisebox{1.5\baselineskip}{.}}}
    \end{align*}
    By Lemma \ref{basint}(iv), to prove the real-rootedness of $p_{\tau(\mathbb{B}(E)) +_\mathcal{M} e}$, it suffices to show that
    \[
         \sum\limits_{A \in \mathcal{A}} p_{\langle A \times (E \setminus X) \rangle} + \sum\limits_{B \in \mathcal{B}} p_{\langle (X \cup e) \times B \rangle} \preceq \left( p_{\tau(\mathbb{B}(X \cup e))} \diamond p_{\mathbb{B}(E \setminus X)} \right).
    \]
    
    In fact, by Lemma \ref{lemma2}, for each $A \in \mathcal{A}$ and $B \in \mathcal{B}$,
    \[
        p_{\langle A \times (E \setminus X) \rangle} = \left( p_{\langle A \rangle} \diamond p_{\mathbb{B}(E \setminus X)} \right) \quad \mbox{and} \quad p_{\langle (X \cup e) \times B \rangle} = \left( p_{\mathbb{B}(X \cup e)} \diamond p_{\langle B \rangle} \right).
    \]
    By Theorem~\ref{maincor},
    \[
        p_{\langle A \rangle} \preceq  p_{\tau(\mathbb{B}(X \cup e))} \quad \mbox{and} \quad p_{\langle B \rangle} \preceq p_{\mathbb{B}(E \setminus X)}
    \]
    for every $A \in \mathcal{A}$ and $B \in \mathcal{B}$. Hence, Lemma \ref{lemma4} implies
    \[
        \left( p_{\langle A \rangle} \diamond p_{\mathbb{B}(E \setminus X)} \right), \left( p_{\tau(\mathbb{B}(X \cup e))} \diamond p_{\langle B \rangle} \right) \preceq \left( p_{\tau(\mathbb{B}(X \cup e))} \diamond p_{\mathbb{B}(E \setminus X)} \right)
    \]
    for every $A \in \mathcal{A}$ and $B \in \mathcal{B}$, which implies
    \[
         \sum\limits_{A \in \mathcal{A}} p_{\langle A \times (E \setminus X) \rangle} + \sum\limits_{B \in \mathcal{B}} p_{\langle (X \cup e) \times B \rangle} \preceq \left( p_{\tau(\mathbb{B}(X \cup e))} \diamond p_{\mathbb{B}(E \setminus X)} \right)
    \]
    by Lemma \ref{basint}(iii). The result now follows from Lemma \ref{basint}(iv).
\end{proof}

\section{A conjecture on interlacing for chain polynomials}
\label{sec-counterexample}

Recall that, given an $(n-1)$-dimensional (finite, abstract) simplicial complex $\Delta$, its $h$\emph{-polynomial} is defined as
\[
    h_\Delta = (1-t)^n f_\Delta \left( \frac{t}{1-t} \right).
\]

Given a finite $(n-1)$-dimensional poset $P$, its \emph{order complex} is defined as the simplicial complex $\Delta(P)$ of all chains in $P$. Clearly, $f_{\Delta(P)} = c_P$. Hence,
\[
    h_{\Delta (P)} = (1-t)^n c_P \left( \frac{t}{1-t} \right).
\]

In~\cite[Conjecture 5.1]{athanasiadis2022chain}, Athanasiadis and Kalampogia-Evangelinou conjectured that the polynomial $h_L$ is real-rooted and it is interlaced by the Eulerian polynomial $A_n$ for every geometric lattice $L$ of rank $n$. We finish this paper by showing that this conjecture is false.

Recall that the \emph{descent number} $\des(\sigma)$ of a permutation $\sigma = \sigma_1 \sigma_2 \cdots \sigma_n$, written in one line notation, in the symmetric group $\sym_n$ is the number of indices $i \in [n-1]$ such that $\sigma_i > \sigma_{i+1}$. The \emph{inversion number} $\inv(\sigma)$ of $\sigma$ is the number of pairs $(i,j) \in [n] \times [n]$ such that $i < j$ and $\sigma_i > \sigma_j$. The $n$th Eulerian polynomial is defined by
$$
A_n= \sum_{\pi \in \sym_n}t^{\des(\sigma)}. 
$$

\begin{proposition}
If $n \geq 3$, then $h_{\mathbb{B}_n(q)}$ is not interlaced by $A_n$ for all $q$ sufficiently large. 
\end{proposition}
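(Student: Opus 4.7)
The plan is to exhibit the subspace lattice $\mathbb{B}_n(q)$ as a counterexample for sufficiently large $q$, by showing that every zero of $h_{\mathbb{B}_n(q)}(t)$ tends to $0$ as $q\to\infty$ while the largest zero of $A_n(t)$ is a fixed strictly negative constant for every $n\ge 3$; this is incompatible with $A_n(t)\preceq h_{\mathbb{B}_n(q)}(t)$.

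The starting point will be the classical $q$-Eulerian formula
\[
    h_{\mathbb{B}_n(q)}(t)\;=\;\sum_{\sigma\in\sym_n} q^{\operatorname{maj}(\sigma)}\,t^{\des(\sigma)} \;=\;\sum_{k=0}^{n-1}a_k(q)\,t^k,
\]
which follows from the EL-shellability of the subspace lattice established by Bj\"orner. I only need the two leading coefficients. The unique $\sigma\in\sym_n$ with $\des(\sigma)=n-1$ is the reverse permutation $n(n-1)\cdots 1$, for which $\operatorname{maj}(\sigma)=\binom{n}{2}$; thus $a_{n-1}(q)=q^{\binom{n}{2}}$. A permutation with exactly $n-2$ descents has descent set $[n-1]\setminus\{i\}$ for some $i\in[n-1]$, so $\operatorname{maj}(\sigma)=\binom{n}{2}-i$, and the maximum value $\binom{n}{2}-1$ is attained precisely by the $n-1$ permutations of the form $\sigma_1\,n\,\sigma_3\cdots\sigma_n$ with $\sigma_3>\cdots>\sigma_n$. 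Hence $a_{n-2}(q)=(n-1)q^{\binom{n}{2}-1}+O(q^{\binom{n}{2}-2})$ and
\[
    \frac{a_{n-2}(q)}{a_{n-1}(q)}\;=\;\frac{n-1}{q}+O(q^{-2}).
\]

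Next I split on real-rootedness of $h_{\mathbb{B}_n(q)}(t)$. If it is not real-rooted, then by the convention for $\preceq$ in Section~2 we have $A_n(t)\not\preceq h_{\mathbb{B}_n(q)}(t)$ immediately. Otherwise, since $h_{\mathbb{B}_n(q)}(0)=1$ and every $a_k(q)>0$, all $n-1$ zeros $\alpha_1\ge\alpha_2\ge\cdots\ge\alpha_{n-1}$ are strictly negative, and Vieta's formulas give
\[
    \sum_{i=1}^{n-1}|\alpha_i|\;=\;\frac{a_{n-2}(q)}{a_{n-1}(q)}\;=\;O(1/q),
\]
so every $|\alpha_i|\to 0$ as $q\to\infty$. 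On the other hand, for $n\ge 3$ the polynomial $A_n(t)$ has degree $n-1\ge 2$ with positive coefficients and $A_n(0)=1$, so its largest zero $\beta_1=\beta_1^{(n)}$ is a fixed strictly negative real number. Once $q$ is large enough that $|\alpha_i|<|\beta_1|$ for every $i$, the second-largest zero $\alpha_2$ satisfies $\alpha_2>\beta_1$, contradicting the inequality $\alpha_2\le\beta_1$ that $A_n(t)\preceq h_{\mathbb{B}_n(q)}(t)$ would require.

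The hardest part of the plan is invoking (or deriving) the $q$-Eulerian formula for $h_{\mathbb{B}_n(q)}(t)$; this is standard but not in the body of the paper. A self-contained alternative is to extract $a_{n-1}(q)=q^{\binom{n}{2}}$ from Philip Hall's theorem (noting that for the Cohen-Macaulay complex $\Delta(\overline{\mathbb{B}_n(q)})$ of dimension $n-2$ one has $h_{n-1}=(-1)^{n-2}\mu_{\mathbb{B}_n(q)}(\hat 0,\hat 1)=q^{\binom{n}{2}}$) and to obtain the leading $q$-asymptotics of $a_{n-2}(q)$ directly from the flag $f$-vector of $\overline{\mathbb{B}_n(q)}$ via Gaussian binomials and inclusion-exclusion. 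Either route yields the same ratio $(n-1)/q+O(q^{-2})$, after which the contradiction with interlacing by $A_n(t)$ is immediate.
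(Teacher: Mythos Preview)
Your approach is essentially the same as the paper's: both show that, after dividing by the leading coefficient, $h_{\mathbb{B}_n(q)}(t)$ tends to $t^{n-1}$ as $q\to\infty$, and $A_n(t)\not\preceq t^{n-1}$ for $n\ge 3$. The paper simply records the coefficient-wise limit $\lim_{q\to\infty} q^{-\binom{n}{2}} h_{\mathbb{B}_n(q)}(t)=t^{n-1}$ and stops there; you make the mechanism more explicit by bounding $\sum_i|\alpha_i|=a_{n-2}(q)/a_{n-1}(q)=O(1/q)$ via Vieta and then comparing against the fixed largest zero of $A_n$.

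One point to correct: the formula the paper cites (from \cite[Proposition~3.1]{athanasiadis2022chain}) is
\[
h_{\mathbb{B}_n(q)}(t)=\sum_{\sigma\in\sym_n} q^{\inv(\sigma)}t^{\des(\sigma)},
\]
using $\inv$, not $\operatorname{maj}$. The joint distributions $(\inv,\des)$ and $(\operatorname{maj},\des)$ on $\sym_n$ are \emph{not} equal for $n\ge 4$, so your displayed formula is not the $h$-polynomial of $\mathbb{B}_n(q)$. This does not damage your argument, however: both polynomials have $a_{n-1}(q)=q^{\binom{n}{2}}$ (the unique permutation with $\des=n-1$ has $\inv=\operatorname{maj}=\binom{n}{2}$), and in the $\inv$ version the permutations with $\des=n-2$ and $\inv=\binom{n}{2}-1$ are exactly the $n-1$ adjacent transpositions of the reverse word, so $a_{n-2}(q)=(n-1)q^{\binom{n}{2}-1}+O(q^{\binom{n}{2}-2})$ holds there as well. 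Your alternative derivation via $h_{n-1}=(-1)^{n}\mu(\hat0,\hat1)=q^{\binom{n}{2}}$ is also a clean way to sidestep the issue entirely, since you only need $a_{n-2}/a_{n-1}\to 0$.
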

\begin{proof}
  By~\cite[Proposition 3.1]{athanasiadis2022chain},
    $$
        h_{\mathbb{B}_n(q)} = \sum_{\substack{\sigma \in S_n }} q^{\inv(\sigma)}t^{\des(\sigma)} = A_n(t;q).
    $$
    Notice that $\inv(\sigma)$ is maximized for the permutation $\hat{\sigma} = n (n-1) \cdots  2 1$, and $\inv(\hat{\sigma}) = n(n-1)/2$. Hence
    \[
        \lim_{q \rightarrow \infty} \left( q^{-\frac{n(n-1)}{2}} A_n(t;q) \right) = t^{n-1}
    \]
    for every $t \in \mathbb{R}$. Since $A_n$ does not interlace $t^{n-1}$ when $n \geq 3$, $h_{\mathbb{B}_n(q)}$ is not interlaced by $A_n$ for $q$ sufficiently large.
\end{proof}

\bibliographystyle{plain}
\bibliography{references}

\begin{thebibliography}{10}

\bibitem{athanasiadis2021face}
Christos~A. Athanasiadis.
\newblock Face numbers of barycentric subdivisions of cubical complexes.
\newblock {\em Israel J. Math.}, 246(1):423--439, 2021.

\bibitem{athanasiadis2023two}
Christos~A. Athanasiadis, Theo Douvropoulos, and Katerina
  Kalampogia-Evangelinou.
\newblock Two classes of posets with real-rooted chain polynomials.
\newblock {\em Electron. J. Combin.}, 31(4):Paper No. 4.16, 22, 2024.

\bibitem{athanasiadis2022chain}
Christos~A. Athanasiadis and Katerina Kalampogia-Evangelinou.
\newblock Chain enumeration, partition lattices and polynomials with only real
  roots.
\newblock {\em Comb. Theory}, 3(1):Paper No. 12, 21, 2023.

\bibitem{BGS-81}
A.~Bj\"orner, A.~M. Garsia, and R.~P. Stanley.
\newblock An introduction to {C}ohen-{M}acaulay partially ordered sets.
\newblock In {\em Ordered sets ({B}anff, {A}lta., 1981)}, volume~83 of {\em
  NATO Adv. Study Inst. Ser. C: Math. Phys. Sci.}, pages 583--615. Reidel,
  Dordrecht-Boston, Mass., 1982.

\bibitem{catalog}
John~E. Blackburn, Henry~H. Crapo, and Denis~A. Higgs.
\newblock A catalogue of combinatorial geometries.
\newblock {\em Math. Comp.}, 27, 1973.

\bibitem{borcea2010multivariate}
J.~Borcea and P.~Br\"and\'en.
\newblock Multivariate {P}\'olya-{S}chur classification problems in the {W}eyl
  algebra.
\newblock {\em Proc. Lond. Math. Soc. (3)}, 101(1):73--104, 2010.

\bibitem{MR2104673}
Petter Br\"and\'en.
\newblock On operators on polynomials preserving real-rootedness and the
  {N}eggers-{S}tanley conjecture.
\newblock {\em J. Algebraic Combin.}, 20(2):119--130, 2004.

\bibitem{branden2015unimodality}
Petter Br\"and\'en.
\newblock Unimodality, log-concavity, real-rootedness and beyond.
\newblock In {\em Handbook of enumerative combinatorics}, Discrete Math. Appl.
  (Boca Raton), pages 437--483. CRC Press, Boca Raton, FL, 2015.

\bibitem{branden2024totally}
Petter Br{\"a}nd{\'e}n and Leonardo Saud~Maia Leite.
\newblock Totally nonnegative matrices, chain enumeration and zeros of
  polynomials.
\newblock {\em arXiv preprint arXiv:2412.06595}, 2024.

\bibitem{brenti2008f}
Francesco Brenti and Volkmar Welker.
\newblock {$f$}-vectors of barycentric subdivisions.
\newblock {\em Math. Z.}, 259(4):849--865, 2008.

\bibitem{theory-of-matroids}
Thomas Brylawski.
\newblock Constructions.
\newblock In {\em Theory of matroids}, volume~26 of {\em Encyclopedia Math.
  Appl.}, pages 127--223. Cambridge Univ. Press, Cambridge, 1986.

\bibitem{Bry}
Tom Brylawski.
\newblock Intersection theory for embeddings of matroids into uniform
  geometries.
\newblock {\em Stud. Appl. Math.}, 61(3):211--244, 1979.

\bibitem{ernest2023extensions}
Ernest~Sorinas Capdevila.
\newblock Extensions of transversal matroids.
\newblock Master's thesis, Universitat Politècnica de Catalunya, 2023.

\bibitem{colbourn2010crc}
Charles~J Colbourn and Jeffrey~H Dinitz.
\newblock The {C}rc {H}andbook of {C}ombinatorial {D}esigns.
\newblock {\em Discrete Mathematics and Its Applications}, 4, 1995.

\bibitem{steiner}
Charles~J. Colbourn and Jeffrey~H. Dinitz, editors.
\newblock {\em Handbook of combinatorial designs}.
\newblock Discrete Mathematics and its Applications (Boca Raton). Chapman \&
  Hall/CRC, Boca Raton, FL, second edition, 2007.

\bibitem{Deza}
M.~Deza.
\newblock Perfect matroid designs.
\newblock In {\em Matroid applications}, volume~40 of {\em Encyclopedia Math.
  Appl.}, pages 54--72. Cambridge Univ. Press, Cambridge, 1992.

\bibitem{triangular-posets}
Peter Doubilet, Gian-Carlo Rota, and Richard Stanley.
\newblock On the foundations of combinatorial theory. {VI}. {T}he idea of
  generating function.
\newblock In {\em Proceedings of the {S}ixth {B}erkeley {S}ymposium on
  {M}athematical {S}tatistics and {P}robability ({U}niv. {C}alifornia,
  {B}erkeley, {C}alif., 1970/1971), {V}ol. {II}: {P}robability theory}, pages
  267--318. Univ. California Press, Berkeley, CA, 1972.

\bibitem{dowlingGroups}
T.~A. Dowling.
\newblock A class of geometric lattices based on finite groups.
\newblock {\em J. Combinatorial Theory Ser. B}, 14:61--86, 1973.

\bibitem{dowling1973q}
T.~A. Dowling.
\newblock A {$q$}-analog of the partition lattice.
\newblock In {\em A survey of combinatorial theory ({P}roc. {I}nternat.
  {S}ympos., {C}olorado {S}tate {U}niv., {F}ort {C}ollins, {C}olo., 1971)},
  pages 101--115. North-Holland, Amsterdam-London, 1973.

\bibitem{mayhew2011asymptotic}
Dillon Mayhew, Mike Newman, Dominic Welsh, and Geoff Whittle.
\newblock On the asymptotic proportion of connected matroids.
\newblock {\em European J. Combin.}, 32(6):882--890, 2011.

\bibitem{neggers1978representations}
Joseph Neggers.
\newblock Representations of finite partially ordered sets.
\newblock {\em J. Combin. Inform. System Sci.}, 3(3):113--133, 1978.

\bibitem{oxley2006matroid}
James Oxley.
\newblock {\em Matroid theory}, volume~21 of {\em Oxford Graduate Texts in
  Mathematics}.
\newblock Oxford University Press, Oxford, second edition, 2011.

\bibitem{vic1997}
Victor Reiner.
\newblock Non-crossing partitions for classical reflection groups.
\newblock {\em Discrete Math.}, 177(1-3):195--222, 1997.

\bibitem{stanley2011enumerative}
Richard~P. Stanley.
\newblock {\em Enumerative combinatorics. {V}olume 1}, volume~49 of {\em
  Cambridge Studies in Advanced Mathematics}.
\newblock Cambridge University Press, Cambridge, second edition, 2012.

\bibitem{stembridge2007counterexamples}
John~R. Stembridge.
\newblock Counterexamples to the poset conjectures of {N}eggers, {S}tanley, and
  {S}tembridge.
\newblock {\em Trans. Amer. Math. Soc.}, 359(3):1115--1128, 2007.

\bibitem{MR1145841}
David~G. Wagner.
\newblock Total positivity of {H}adamard products.
\newblock {\em J. Math. Anal. Appl.}, 163(2):459--483, 1992.

\end{thebibliography}
\end{document}